\newtheorem{theorem}{Theorem}[section]
\newdefinition{definition}[theorem]{Definition}
\newdefinition{remark}[theorem]{Remark}
\newdefinition{example}[theorem]{Example}
\newproof{proof}{Proof}
\newtheorem{lemma}[theorem]{Lemma}
\newtheorem{propo}[theorem]{Proposition}
\newtheorem{claim}[theorem]{Claim}
\newtheorem{conjecture}[theorem]{Conjecture}
\newtheorem{corollary}[theorem]{Corollary}
 \newcommand{\GG}{\mathcal{G}}
\newcommand{\len}[1]{|#1|}
\newcommand{\tough}{stubborn}
\newcommand{\TT}{S} 
\newcommand{\TM}{N} 
\newcommand{\TE}{E} 
\newcommand{\TB}{L} 
\newcommand{\TG}{R} 
\newcommand{\cc}{r_3} 
\newcommand{\muC}{\mu^{\#}} 
\newcommand{\muN}[1]{\mu(#1)}
\newcommand{\muCN}[1]{\muC(#1)}
\newcommand{\muCNe}[2]{e(#1,#2)}
\newcommand{\B}[1]{{\bf #1}}
\newcommand{\C}[1]{{\cal #1}}
\newcommand{\I}[1]{\mathbbm{#1}}
\newcommand{\dist}{\mathrm{dist}}
\renewcommand{\mid}{:}
\newcommand{\dd}{\,\mathrm{d}}
\newcommand{\hide}[1]{}
\renewcommand{\dots}{\hspace{0.9pt}.\hspace{0.3pt}.\hspace{0.3pt}.\hspace{1.5pt}}
\renewcommand{\ge}{\geqslant}
\renewcommand{\le}{\leqslant}
\renewcommand{\geq}{\geqslant}
\renewcommand{\leq}{\leqslant}
\begin{document}

\authorheadline{E.\ Cs\'oka, G.\ Lippner and O.\ Pikhurko}
\runningtitle{K\H{o}nig's Line Coloring and Vizing's Theorems for Graphings}
 
\begin{frontmatter}

\title{K\H{o}nig's Line Coloring and Vizing's Theorems for Graphings}
\author[1]{Endre Cs\'oka}
\author[2]{G\'abor Lippner}
\author[3]{Oleg Pikhurko}
\address[1]{MTA Alfr\'ed R\'enyi Institute of Mathematics, Hungarian Academy of 
Sciences, Budapest H-1053, Hungary
 \ead{csokaendre@gmail.com}}
\address[2]{Department of Mathematics, Northeastern University, Boston, MA 
02115, USA
\ead{g.lippner@neu.edu}}
\address[3]{Mathematics Institute and DIMAP,
University of Warwick,
Coventry CV4 7AL, UK
\ead{O.Pikhurko@warwick.ac.uk}}

\received{11 November 2015}

\begin{abstract}
The classical theorem of Vizing states that
every graph of maximum degree $d$ admits an edge-coloring with at 
most $d+1$ colors. Furthermore, as it was earlier shown by K\H{o}nig,
$d$ colors suffice if the graph is bipartite.

We investigate the existence of measurable edge-colorings for graphings (or
measure-preserving graphs).  A graphing is an analytic generalization of a 
bounded-degree graph that appears in various areas, such as sparse graph 
limits, orbit equivalence and measurable
group theory.  
We show that every graphing of maximum degree $d$ admits a measurable edge-coloring with $d + O(\sqrt{d})$ colors; furthermore, 
if the graphing has no odd cycles, then $d+1$ colors suffice. In fact,
if a certain conjecture about finite graphs that strengthens Vizing's theorem is true, then
our method will show that $d+1$ colors are always enough.
\end{abstract}

	\MSC[2010]{05C15, 05C63 (primary); 03E05, 03E15, 22F10, 28D05, 37A15 (secondary)}

 \end{frontmatter}

\section{Introduction}

The old theorem of K\H onig \cite{konig:16} states that  a bipartite graph
of maximum degree $d$ admits an edge-coloring with $d$ colors. (Here, all edge-colorings
are assumed to be \emph{proper}, that is, no two adjacent edges have the same color.) Some 50 years later,
Vizing~\cite{vizing:64} and, independently, Gupta~\cite{gupta:66} proved that, if we do not require
that the graph is bipartite, then $d+1$ colors suffice. 
These results laid the foundation of edge-coloring, an important and active area of graph theory; see, for example, 
the recent book of Stiebitz, Scheide, Toft and Favrholdt~\cite{stiebitz+scheide+toft+favrholdt:gec}.

In this paper, we consider measurable edge-colorings of  \emph{graphings} (which are graphs with some extra analytic structure, to be defined shortly). Although the graphs that we will consider may have 
infinitely many (typically, continuum many) vertices, we will always require that the maximum degree is bounded. 

If one does not impose any further structure, then K\H{o}nig's and Vizing's theorems extend, with the same bounds, to infinite graphs by the Axiom of Choice. Indeed, every finite subgraph is edge-colorable by the original 
theorem so the Compactness Principle gives the required edge-coloring of the whole graph. 

The first step towards graphings is to add Borel structure. Namely, a \emph{Borel graph} (see e.g.\ Lov\'asz \cite[Section 18.1]{lovasz:lngl}) is a triple $\GG=(V,\C B,E)$, where $(V,\C B)$ is a standard Borel space and
$E$ is a Borel subset of $V\times V$ that defines a symmetric and anti-reflexive binary relation. As we have
already mentioned, here we restrict ourselves to those graphs $\GG$ for which
the \emph{maximum degree} 
 $$
 \Delta(\GG):=\max\{\deg(x): x\in V\}
 $$
  is finite. While this definition
sounds rather abstract, it has found concrete applications to finite graphs: e.g.\ Elek and Lippner~\cite{elek+lippner:10} used Borel matchings to give another proof of the result of Nguyen and Onak~\cite{nguyen+onak:08} that the matching ratio in bounded-degree graphs is testable.

Define the \emph{Borel chromatic number} $\chi_{\C B}(\GG)$ of a Borel graph $\GG$ to be the minimum $k\in\I N$ such that  there is a partition $V=V_1\cup\dots\cup V_k$
into Borel \emph{independent} sets (that is, sets that do not span an edge of $E$). Also, the \emph{Borel chromatic index} $\chi'_{\C B}(\GG)$ is  the smallest number of Borel matchings that partition~$E$. (By a 
\emph{matching} we understand a set of pairwise disjoint edges; we do not require that every vertex is covered.) A systematic study of Borel colorings was initiated by Kechris, Solecki and Todorcevic \cite{kechris+solecki+todorcevic:99} who,
in particular, proved the following result.

\begin{theorem}[Kechris, Solecki and Todorcevic~\cite{kechris+solecki+todorcevic:99}]\label{th:KST} For every Borel graph $\GG$ of maximum degree
$d$, we have that $\chi_{\C B}(\GG)\le d+1$ and $\chi'_{\C B}(\GG)\le  2d-1$.\qed
 \end{theorem}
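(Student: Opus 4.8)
The plan is to prove the vertex bound first and then deduce the edge bound by applying it to the line graph. The vertex argument itself splits into two pieces: first produce \emph{some} Borel proper coloring with a countable palette, and then ``compress'' it down to $d+1$ colors by a Borel greedy recoloring.

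For the first piece I would invoke standard descriptive-set-theoretic machinery. Since $\GG$ has bounded degree, each connected component is countable, so the connectedness relation $E_\GG$ is a countable Borel equivalence relation, and by the Feldman--Moore theorem it is generated by a sequence $\gamma_1,\gamma_2,\dots$ of Borel involutions of $V$. Restricting $\gamma_n$ to the Borel set $\{x:x\sim\gamma_n(x)\}$ yields a Borel matching $M_n$, and $\bigcup_n M_n=E$; disjointifying gives $E=\bigsqcup_n M_n^{*}$ with the $M_n^{*}$ pairwise disjoint Borel matchings. Now choose a Borel transversal $T_n$ of each $M_n^{*}$ --- possible because the equivalence relation whose classes are the $M_n^{*}$-edges has all classes of size $\le 2$, hence is smooth --- and set $s_n(x)=1$ if $x\in T_n$ and $s_n(x)=0$ otherwise. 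The map $c_0\colon x\mapsto(s_n(x))_{n\in\N}$ is Borel and separates the two endpoints of every edge; and since each vertex is an endpoint of at most $d$ of the matchings $M_n^{*}$, the value $c_0(x)$ always lies in the \emph{countable} set of $0/1$-sequences with at most $d$ ones. So $\chi_{\C B}(\GG)\le\aleph_0$.

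To get down to $d+1$ colors I would enumerate the colors of $c_0$ as $0,1,2,\dots$ and define $c\colon V\to\{0,\dots,d\}$ by recursion on $c_0(x)$: let $c(x)$ be the least element of $\{0,\dots,d\}\setminus\{c(y):y\sim x,\ c_0(y)<c_0(x)\}$. This is well defined (the excluded set has at most $\deg(x)\le d$ members), it remains Borel stage by stage because the boundedly many neighbours of a vertex can be enumerated by Borel partial functions, and it is proper because whenever $x\sim y$ with $c_0(x)<c_0(y)$ the value $c(x)$ was explicitly forbidden at the moment $c(y)$ was chosen. Hence $\chi_{\C B}(\GG)\le d+1$. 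For the edge bound, form the line graph $\C L(\GG)$ --- a Borel graph whose vertex set is (a standard Borel realization of) the edge set of $\GG$, with two edges adjacent iff they share an endpoint, so $\Delta(\C L(\GG))\le 2d-2$. Applying the vertex bound just proved colors $\C L(\GG)$ with $(2d-2)+1=2d-1$ colors, and a color class, being a set of pairwise non-adjacent edges of $\GG$, is precisely a Borel matching; thus $E$ splits into $2d-1$ Borel matchings and $\chi'_{\C B}(\GG)\le 2d-1$.

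The genuinely delicate point is Step~1, and more specifically the bookkeeping that keeps the palette countable rather than of size continuum --- this is exactly what the ``at most $d$ matchings meet each vertex'' observation buys; everything else is routine once one is comfortable verifying that the constructions stay Borel. In this sense the theorem is a careful application of standard tools (Feldman--Moore, smoothness of finite Borel equivalence relations, Luzin--Novikov-style selection of neighbours and disjointification) rather than a combinatorially hard statement.
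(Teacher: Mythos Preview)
The paper does not prove this theorem; it is quoted from \cite{kechris+solecki+todorcevic:99} and closed with a \qed. Your argument is correct and follows the original Kechris--Solecki--Todorcevic template: obtain a Borel proper coloring with countably many colors, compress greedily to $d+1$ colors by induction on the initial color, and then pass to the line graph (a Borel graph of maximum degree at most $2d-2$) for the edge bound.

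One remark on Step~1. Invoking Feldman--Moore to manufacture the countable coloring is valid, but heavier than what is actually needed. The more elementary and more common route is to fix a countable base $\{U_n\}_{n\in\N}$ for a compatible Polish topology on $V$ and set $c_0(x)$ equal to the least $n$ such that $x\in U_n$ but no neighbour of $x$ lies in $U_n$; such $n$ exists because $x$ has only finitely many neighbours. This $c_0$ is Borel (the neighbour relation has finite sections, so ``some neighbour lies in $U_n$'' is a Borel condition via Luzin--Novikov or via the generating maps in Definition~\ref{df:graphing}), and it is proper: if $x\sim y$ and $c_0(x)=c_0(y)=n$, then $x\in U_n$, yet $x$ is a neighbour of $y$ and hence excluded from $U_n$, a contradiction. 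This replaces Feldman--Moore, the disjointification of matchings, the smooth transversals, and the bit-string encoding by a single line. The greedy compression and the line-graph reduction are exactly as you wrote them.
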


Very recently, Marks~\cite{marks:16} constructed, for every $d\ge 3$, an example of a $d$-regular Borel graph $\GG$ such that $\GG$ has no cycles, $\chi_{\C B}(\GG)=2$ and $\chi'_{\C B}(\GG)=2d-1$. (Such a graph for $d=2$ was earlier
constructed by Laczkovich~\cite{laczkovich:88}.)  We see that the
Borel chromatic index may behave very differently from the finite case. 

Marks~\cite{marks:16} also considered the version of the problem when, 
additionally, we have a measure $\mu$ on $(V,\C B)$ and ask for the \emph{measurable chromatic
index} $\chi'_{\mu}(\C B)$, the smallest integer $k$ for which there is a Borel 
partition $E=E_0\cup E_1\cup\dots\cup E_k$ such that $E_i$ is a matching for each $i\in [k]:=\{1,\dots,k\}$
while the set of vertices covered by $E_0$ has measure zero. In particular, Marks~\cite[Question 4.9]{marks:16} asked if  
 \begin{equation}\label{eq:MarksQn}
 \chi'_{\mu}(\GG)\le \Delta(\GG)+1
 \end{equation} 
 always holds and proved~\cite[Theorem 4.8]{marks:16} that this is the case for $\Delta(\GG)=3$. (It is not hard to show that 
(\ref{eq:MarksQn}) holds when $\Delta(\GG)\le 2$.)

Although we cannot answer the original question of Marks, we can improve the upper bound
on the measurable chromatic index when the measure $\mu$ defines a \emph{graphing} (or equivalently if the measure $\mu$ is invariant, see~\cite[Section~2]{kechris+miller:toe} for definitions and proofs). We
believe that this is a very important case of Marks' question since measure-preserving
systems are central to many areas of mathematics. (In fact, the question whether~\eqref{eq:MarksQn} holds for graphings was earlier asked by Ab\'ert~\cite[Question~35]{abert:10:questions}.)

\begin{definition}\label{df:graphing} A  \emph{graphing} (or a \emph{measure-preserving graph}) is a quadruple  $\GG = (V,\C B, E,\mu)$, where $(V,\C B)$ is a standard Borel space, $\mu$ is a probability measure on $(V,\C B)$, and there are finitely many triples $(\phi_1,A_1,B_1),\dots,(\phi_k,A_k,B_k)$ such 
that 
 \begin{equation}\label{eq:E}
 E=\big\{\, (x,y)\mid x\not=y\ \ \&\ \ \exists\, i\in [k]\ \ \phi_i(x)=y
 \mbox{ or }\phi_i(y)=x\big\}\subseteq V\times V
 \end{equation}
 and each $\phi_i$ is an invertible Borel bijection between $A_i,B_i\in \C B$ that preserves 
the measure $\mu$.
 
\end{definition}

\begin{remark}  Note that if $(V,\C B, E,\mu)$ is a graphing,
then $(V,\C B, E)$ is a Borel graph.\end{remark}

We refer the reader to Lov\'asz~\cite[Section 18.2]{lovasz:lngl} for an introduction to graphings. There are other equivalent definitions. We chose
the above definition as it is combinatorial in nature and allows us to derive other properties of
graphings rather easily. While our use of the term \emph{graphing} seems to be
standard in the area of sparse graph limits, it has another meaning in descriptive set theory and orbit equivalence.

Graphings appear in various fields.
One can view $(V,\C B,\mu,\phi_1,\dots,\phi_k)$ as a generalization of a dynamical measure-preserving system. When we pass to the graphing $\GG$, we lose some information but many properties (such as ergodicity) can still be recovered. Also, if $\phi_1,\dots,\phi_k$ come from a measure-preserving group action (with $A_i=B_i=V$), then the connectivity components of $\GG$
correspond to orbits. Indeed, measure-preserving graphs play an important role in orbit equivalence and measurable group theory (see \cite{gaboriau:02,gaboriau:10,kechris:gaega,kechris+miller:toe,shalom:05}). 
For example, the well-known \emph{Fixed Price Problem} for groups (introduced by Levitt~\cite{levitt:95}
and extensively studied in e.g.~\cite{gaboriau:98,gaboriau:00})
involves finding the infimum
of  the average degree $\int_V \deg(x)\dd\mu(x)$ over all measure-preserving graphs on $(V,\C B,\mu)$ with the given
connectivity components. Measure-preserving graphs are also of interest in descriptive set theory (see~\cite{kechris+marks:survey}).
We came to this topic motivated by limits of bounded-degree graphs 
(see e.g.~\cite{lovasz:lngl}) since
graphings can be used to represent a limit object for both the Benjamini-Schramm~\cite{benjamini+schramm:01} (or local) convergence and the Bollob\'as-Riordan~\cite{bollobas+riordan:11} (or global-local)
convergence, as shown by Aldous and Lyons~\cite{aldous+lyons:07}, Elek~\cite{elek:07} and
Hatami, Lov\'asz and Szegedy~\cite{hatami+szegedy+lovasz:14}.

\begin{example}\label{ex:translation} Given $\alpha\in\I R$, let $\C T_\alpha$
be the graphing on the real unit interval $([0,1),\C B,\lambda)$, with the 
Lebesgue measure $\lambda$, generated by the \emph{$\alpha$-translation} 
$t_\alpha:[0,1)\to[0,1)$ that maps $x$ to $x+\alpha\pmod 1$.\end{example}

The above simple example of a graphing exhibits various interesting properties that contradict ``finite intuition'' when $\alpha$ is irrational. Namely, $E$ defines a 2-regular and acyclic graph while the ergodicity of $t_\alpha\circ t_\alpha=t_{2\alpha}$ implies that every Borel vertex 2-coloring or every Borel matching misses a set of vertices of positive measure (and thus
each of $\chi_{\C B}(\C T_\alpha)$, $\chi'_{\C B}(\C T_\alpha)$ and $\chi_\lambda'(\C T_\alpha)$ is strictly larger than~$2$). In particular, we see that the property of being \emph{bipartite} (that is, $\chi_{\C B}(\GG)\le 2$) may be strictly stronger than having no odd cycles.

We can make a finite graph $G=(V,E)$ into a graphing by letting $\C B=2^V$ consist of all subsets of $V$
and $\mu$ be the uniform measure on $V$.  Here, the smallest $k$ that satisfies (\ref{eq:E}) is equal to the
minimum number of graphs with degree bound 2 that decompose $E$.
This is trivially at least $\Delta(G)/2$ and,  by Vizing's theorem, is at most $\lceil (\Delta(G) + 1) / 2 \rceil$. 
Also, if we additionally require that $A_i\cap B_i=\emptyset$ for all $i\in [k]$, then the smallest $k$ is exactly the chromatic index~$\chi'(G)$.  In Section~\ref{MinK} we consider the smallest
$k$ in Definition~\ref{df:graphing} that suffices for every graphing of degree bound $d$ as well as its
variant where a null-set of errors is allowed. It should not be surprising to the reader that Borel and measurable chromatic indices play an important role in estimating these parameters.
This provides further motivation for our  main result that $\chi_\mu'(\GG)=(1+o(1))\,\Delta(\GG)$ for every graphing $\GG$:

\begin{theorem}\label{th:main} If $\GG=(V,\C B,E,\mu)$ is a graphing with maximum degree at most $d$, then its measurable chromatic index is at most $d+O(\sqrt{d})$. Moreover, if $\GG$ has no odd cycles, then $\chi'_\mu(\GG)\le d+1$.
\end{theorem}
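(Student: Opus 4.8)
The plan is to reduce the measurable edge-coloring problem for graphings to the finite case by exploiting a ``toast''-type structure: since a graphing of bounded degree has, off a null set, a Borel decomposition into connected pieces of controlled geometry, one would like to color each finite piece by Vizing (or K\H{o}nig in the bipartite case) and then stitch these colorings together. The obstruction to naive stitching is the boundary between pieces, which is why one cannot hope for exactly $d$ colors in general but must spend a few extra colors (or, in the bipartite case, one extra color) as a ``buffer'' near the cut edges. Concretely, I would first invoke a hyperfiniteness-type argument: a graphing is measure-hyperfinite, so for every $\varepsilon>0$ there is a Borel subset $S\subseteq V$ with $\mu(S)<\varepsilon$ such that each connected component of $\GG\setminus S$ (or of the graph obtained by deleting the edges incident to $S$) is finite, and moreover one can arrange these finite components to have a bound on their size depending only on $d$ and $\varepsilon$, together with a Borel choice of this decomposition. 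The edges incident to $S$ — call them the \emph{cut edges} — form a set whose incident-vertex measure is $O(\varepsilon)$.

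The next step is to handle the two regimes separately. For the bipartite (no odd cycles) case aiming at $d+1$ colors: set aside one color, say color~$0$, reserved for a fraction of the cut edges; on each finite component apply K\H{o}nig's theorem to properly $d$-edge-color it; the conflicts occur only along cut edges, and since every vertex is incident to at most one cut edge after a suitable refinement of the decomposition (this is the point where one must be careful — a vertex in $S$ may have several cut edges, so one first pushes a thin shell of vertices around $S$ into a slightly enlarged null set, or iterates the decomposition), one recolors each conflicting cut edge with color~$0$, obtaining a proper coloring with $d+1$ colors whose uncolored part (if any) is null. For the general case aiming at $d+O(\sqrt d)$: here one uses Vizing's theorem to get $d+1$ colors on each finite piece, but now stitching requires a palette of ``fresh'' colors near the boundary; one takes a Borel proper vertex-coloring of an auxiliary graph (the ``component adjacency'' structure, or a power of $\GG$) with $O(\sqrt d)$ colors using Theorem~\ref{th:KST}, and uses these to shift the palettes of neighboring components so that no cut edge sees a conflict. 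The quantitative $\sqrt d$ arises from balancing: recoloring a cut edge locally by a Vizing-type augmenting path argument can be done if one has roughly $\sqrt d$ spare colors to absorb the alternating-path fans, and the measure-zero escape set absorbs the components where the augmentation fails.

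I would organize the argument so that all the genuinely measure-theoretic content is isolated in one lemma — the Borel hyperfinite decomposition with bounded component size and bounded cut degree — and all the combinatorics is then a statement about finite graphs with a small set of ``free'' edges. The key finite lemma would be: if $G$ is a finite graph of maximum degree $\le d$ with a distinguished set $F$ of edges such that each vertex meets at most one edge of $F$, then $G$ has a proper edge-coloring with $d+1$ colors (bipartite: $d$ colors plus one spare) in which every edge of $F$ receives a prescribed reserved color, or more precisely can be recolored into the reserved palette after solving the interior by Vizing/K\H{o}nig. This is provable by the standard Vizing fan/rotation technique applied edge by edge to $F$.

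\textbf{Main obstacle.} The hard part will be the measurable bookkeeping at the boundary: ensuring that the finite decomposition can be chosen so that the cut edges are ``sparse'' at every vertex (ideally at most one per vertex) \emph{and} Borel, uniformly, so that the local recolorings performed on the different finite components are mutually consistent and jointly Borel. Getting the component sizes uniformly bounded (rather than merely finite) while keeping everything Borel, and making sure the iterated ``shell removal'' that reduces cut-degree to one only costs a null set in the measure, is where the real work lies; the finite combinatorial lemmas, by contrast, are essentially classical Vizing-type arguments, and the $O(\sqrt d)$ loss in the general case is exactly the price of not being able to reduce the cut-degree all the way down to one without spending extra colors instead.
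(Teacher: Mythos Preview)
Your proposal has a fundamental gap at the very first step: graphings are \emph{not} measure-hyperfinite in general. The property you invoke --- that for every $\varepsilon>0$ one can delete a Borel set $S$ of measure $<\varepsilon$ so that all remaining components are finite (let alone of uniformly bounded size) --- is essentially amenability of the underlying orbit equivalence relation, and it fails for graphings arising from free measure-preserving actions of non-amenable groups such as the free group $F_2$. In such examples there is no toast structure, no Borel exhaustion by finite pieces, and no way to reduce to the finite case by cutting out a small set. The entire architecture of your argument collapses here: without hyperfiniteness there are no finite pieces on which to run Vizing or K\H{o}nig, and hence nothing to stitch. The subsidiary claims (bounded cut-degree after ``shell removal'', the $\sqrt d$ colors arising from coloring a component-adjacency graph) are therefore moot, though it is worth noting that even in the amenable setting the justification you give for the $\sqrt d$ loss is not an argument but a hope.

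The paper's proof uses a completely different mechanism that requires no hyperfiniteness. It is an induction on the maximum degree: at each stage one constructs a Borel matching $M$ that covers, up to a null set, all vertices of current maximum degree except a very sparse set, so that $\GG\setminus M$ satisfies the inductive hypothesis with the degree bound lowered by one. The matching $M$ is obtained as the limit of a sequence of matchings improved along bounded-length augmenting paths, and the crucial analytic input is an expansion estimate (Lemmas~\ref{lemma:denseset} and~\ref{lemma:expansion}): after designating a dense Borel set $K$ of ``tolerated error'' locations (at most one uncovered high-degree vertex per star), subsets disjoint from $K$ expand in measure, which forces the set of unhappy vertices to decay exponentially in the augmenting-path length. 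The $O(\sqrt d)$ loss in the non-bipartite case comes not from boundary-stitching but from a purely finite extension lemma (Lemma~\ref{pre-colored}) used to color the ``stumps'' --- small obstructing subgraphs that are removed before the matching argument and colored last, with their boundary edges pre-colored.
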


In fact, Theorem~\ref{th:main} is a direct consequence of Lemma~\ref{pre-colored} and Theorem~\ref{vizing theorem}. In order to state them, we need some further preparation. 

\begin{definition}\label{df:f}
Let $f(k)$ be the smallest integer $f$ such that for every $d\in[k]$ the following holds. Let $G$ be an 
arbitrary finite
graph such that every degree is at most $d$, except at most one vertex of degree $d+1$. Suppose that at most $d-1$ \emph{leaves} (that is, edges with one of their endpoints having degree 1) are pre-colored. Then this pre-coloring can be extended to an edge-coloring of the whole graph
$G$ that uses at most $d+f$ different colors. 
 \end{definition}

By definition, the function $f(k)$ is non-decreasing in $k$. Since we allow a vertex of degree $k+1$ (when $d=k$), we have that $f(k)\ge 1$.  We make the following conjecture which, if true, will give a strengthening of Vizing's theorem. 

\begin{conjecture}\label{cj:f} $f(k)=1$ for all $k\ge 1$.\end{conjecture}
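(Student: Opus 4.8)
\medskip
\noindent\textbf{Proof proposal.} The plan is to recast the assertion as a list-avoiding edge-coloring problem and then run the classical fan-and-chain machinery behind Vizing's and Fournier's theorems, carrying the extra constraints along. First I would eliminate the pre-colored leaves (we may assume the pre-coloring is proper, as it otherwise extends to nothing): if $e=uv$ is a pre-colored leaf with $\deg(v)=1$ and color $c$, delete $e$ together with the now-isolated $v$ and instead record $c$ in a list $L(u)\subseteq\{1,\dots,d+1\}$ of \emph{colors forbidden at $u$}. Since pre-colored leaves sharing an endpoint carry distinct colors, afterwards one has a graph $G'$ with $\deg_{G'}(u)+|L(u)|=\deg_G(u)$ for every vertex $u$, hence $\deg_{G'}(u)+|L(u)|\le d+1$ with equality only possibly at the unique vertex $w$ of degree $d+1$, and $\sum_u|L(u)|\le d-1$. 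It then suffices to find a proper $(d+1)$-edge-coloring of $G'$ in which every edge $uv$ avoids $L(u)\cup L(v)$.

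I would build this coloring by induction (on the number of edges of $G'$, or on $\sum_u|L(u)|$), maintaining a proper partial coloring respecting the lists and calling a color \emph{available} at a vertex only if it lies neither on an incident edge nor in that vertex's list. The degree-plus-list bound ensures that a vertex meeting an uncolored edge always has an available color, so a Vizing fan grows there just as usual, with $w$ needing, throughout, the extra care of Fournier's refinement. The key point is that the two elementary moves interact very differently with the lists: \emph{rotating a fan} is automatically list-feasible, since each re-colored fan edge $uv_i$ receives either a color already present at $u$ (so outside $L(u)$) or one chosen available at the relevant fan vertex (so outside its list); the only move that can fail is a \emph{Kempe chain flip}, and only at the far endpoint of the $\alpha/\beta$-path, where the chain stops because the absent color happens to sit in that vertex's list rather than being genuinely missing from its edges. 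The base case $\sum_u|L(u)|=0$ is plain: if $\Delta(G')\le d$ it is Vizing's theorem, and if $G'$ has a unique vertex of maximum degree $d+1$ then $\chi'(G')=d+1$ by Fournier's theorem, since $\{w\}$ spans no edge.

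The crux, and the step I expect to be the real obstacle, is to control the Kempe flips: one must choose the flip---among those offered by the fan and its pre-rotations---so that neither chain endpoint is of the bad type, and this is exactly where the budget $\sum_u|L(u)|\le d-1$ should be spent, there being at most $d-1$ forbidden (color, vertex) incidences while a maximal fan at a vertex of degree $\le d$, together with its rotations, supplies a comparable amount of freedom in $\alpha$, in $\beta$, and in which edge is re-colored. Making this precise is the real work; in particular the tight configurations---near $w$, and near any vertex carrying a large share of the budget---would need to be isolated and handled by hand, presumably by ordering the edges so that those next to such vertices are colored before their lists can do any harm. Should the counting fail for the full budget $d-1$, the same scheme ought to still give $f(k)=O(\sqrt{k})$, which already yields the first assertion of Theorem~\ref{th:main}.
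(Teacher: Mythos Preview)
This statement is a \emph{conjecture} in the paper, not a theorem: the authors explicitly say that it is open for $k\ge 4$ (it is trivial for $k\le 2$, and Udvari handled $k=3$), and they only prove the weaker bound $f(k)=O(\sqrt{k})$ in Lemma~\ref{pre-colored}. So there is no ``paper's own proof'' to compare against; you are attempting to settle an open problem.

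Your reformulation into a vertex-list constraint with $\deg_{G'}(u)+|L(u)|\le d+1$ and total budget $\sum_u|L(u)|\le d-1$ is correct and natural, and your identification of the sole obstruction---a Kempe $\alpha/\beta$-chain terminating at a vertex where the ``missing'' color is missing only because it lies in that vertex's list---is exactly right. But the passage where you write ``one must choose the flip\dots so that neither chain endpoint is of the bad type, and this is exactly where the budget $\sum_u|L(u)|\le d-1$ should be spent'' is not a proof; it is the statement of what one would like to be true. A maximal Vizing fan gives you at most $\deg(u)$ candidate colors for $\beta$ and essentially one $\alpha$ per rotation, while a single list entry can obstruct many chains simultaneously (every chain of the relevant color pair that happens to pass through that vertex). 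There is no evident bijective or counting reason why $d-1$ forbidden incidences cannot block all of the $O(d)$ flips available at a given step, and you give none. This is precisely the gap that makes the conjecture nontrivial, and you yourself flag it as ``the real work''.

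Your fallback remark that ``the same scheme ought to still give $f(k)=O(\sqrt{k})$'' is also unsupported: nothing in the fan/chain picture you describe produces a $\sqrt{k}$. The paper's $O(\sqrt{k})$ bound (Lemma~\ref{pre-colored}) is obtained by an entirely different, probabilistic argument---color by Vizing, compose with a random permutation of colors, and use McDiarmid's concentration inequality to show that the conflict subgraph has maximum degree $O(\sqrt{d})$, then recolor it with fresh colors. If you want the weaker bound, that is the route to take; if you want $f(k)=1$, the step you labeled ``the crux'' is genuinely missing, and as far as the paper is concerned the problem is open.
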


Conjecture~\ref{cj:f} trivially holds for $k\le 2$. Bal\'azs Udvari (personal communication) proved it
for $k=3$ but his proof does not seem to extend to larger~$k$.  We note that allowing a vertex of degree $d+1$ 
seems to be not an essential extension, but the pre-colored edges cause the difficulties. 
For general $k$, we can prove a weaker bound $f(k)=O(\sqrt k)$, which follows from the following lemma.

\begin{lemma}\label{pre-colored} Let $d$ be sufficiently large. Then every pre-coloring of at most $d$ leaves of a finite graph $G$ with $\Delta(G)\le d$ extends to an edge-coloring of $G$ that uses 
at most $d+ 9\sqrt d$ colors.
\end{lemma}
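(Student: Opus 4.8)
The plan is to handle the pre-colored leaves by "sacrificing" a block of extra colors dedicated to them, and then to color the rest of the graph by a standard Vizing-type argument. Set $t = \lceil 3\sqrt d\rceil$ or so, and split the palette into the "main" colors $\{1,\dots,d\}$ together with a reserve block $\mathcal R$ of roughly $9\sqrt d$ extra colors. First I would delete all pre-colored leaves from $G$; what remains is a graph $G'$ of maximum degree at most $d$ with no pre-coloring, which by Vizing's theorem has a proper edge-coloring $c'$ with $d+1$ colors — but I actually want to be more economical and color $G'$ using only a subset of the main colors, keeping $\mathcal R$ free at the low-degree vertices where leaves were attached. The point of allowing $d+9\sqrt d$ colors rather than $d+1$ is exactly to create enough slack so that each deleted leaf, when re-inserted, can be recolored without a global augmenting-path argument.

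The key mechanism I expect to use is a counting/greedy argument on the pre-colored leaves. Each pre-colored leaf $e=\{u,v\}$ with $\deg(v)=1$ has a forced color $c(e)$; the only constraint is that $c(e)$ must differ from the colors of the (at most $d-1$) other edges at $u$. Process the pre-colored edges one at a time. When we come to $e=\{u,v\}$, the vertex $u$ has at most $d-1$ already-colored incident edges, so at most $d-1$ colors are forbidden at $u$; if the forced color $c(e)$ is among the currently-used colors at $u$ we have a conflict, and we must recolor one of the conflicting main edges at $u$. Here is where the reserve block earns its keep: because $|\mathcal R|$ is on the order of $\sqrt d$ and the number of pre-colored leaves is at most $d$, a Kempe-chain or direct-recoloring argument lets us push the conflicting edge onto a fresh reserve color, at the cost of possibly propagating the conflict a bounded number of steps. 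I would bound the total "damage" by a double-counting argument: each reserve color, once spent at some vertex, blocks only a controlled number of future conflicts, and choosing the size of $\mathcal R$ to be $\Theta(\sqrt d)$ (with constant $9$ after optimizing the slack) makes the books balance. The role of $\sqrt d$ is the familiar one: with $r$ reserve colors one can absorb on the order of $r^2$ independent local conflicts, and we have up to $d$ leaves, so $r \asymp \sqrt d$ is exactly the threshold.

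The main obstacle, as the paper itself flags, is precisely the interaction of several pre-colored leaves sharing a common high-degree vertex $u$: if $u$ has degree close to $d$ and carries several pre-colored leaves whose forced colors clash with each other's needs, then the recoloring of one leaf's neighborhood can undo the fix for another. To control this I would group the pre-colored leaves by their endpoint of higher degree, handle all leaves at a given vertex $u$ simultaneously (solving a small local list-edge-coloring problem at $u$ using the main colors plus $\mathcal R$), and only then move to the next vertex — arguing that the reserve colors consumed at $u$ are "charged" to $u$ and, since distinct such vertices are few, the total reserve usage stays below $9\sqrt d$. A secondary technical point is ensuring that when we recolor a main edge at $u$ to free up a color, the new color we assign does not itself create a conflict at the far endpoint of that edge; this is handled by the standard Vizing fan/rotation argument, which only needs one spare color locally, and we always have spares in $\mathcal R$. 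Finally I would remark that the constant $9$ is not optimized and the "sufficiently large $d$" hypothesis merely absorbs rounding and the small-$d$ boundary cases, where Conjecture~\ref{cj:f} or direct checking already gives a far better bound.
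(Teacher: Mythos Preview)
Your proposal has a genuine gap: the assertion that ``with $r$ reserve colors one can absorb on the order of $r^2$ independent local conflicts'' is never justified, and the mechanism you sketch does not deliver it. Consider a vertex $w$ of degree $d$ whose neighbors $u_1,\dots,u_d$ each carry exactly one pre-colored leaf. After Vizing-coloring $G'$ it can happen that for every $i$ the forced color of the leaf at $u_i$ coincides with the color assigned to the edge $u_iw$; then each of these $d$ edges must be recolored, and all of them meet at $w$, so $w$ requires $d$ distinct reserve colors, not $O(\sqrt d)$. Your fallback --- that ``distinct such vertices are few'' --- is simply false: the non-leaf endpoints of the $d$ pre-colored leaves can be $d$ pairwise distinct vertices. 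The Vizing-fan remark does not rescue this either, since a fan rotation at $u_i$ may well recolor an edge incident to $w$, and you have no control over how many such rotations hit a single vertex.

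The paper's proof takes a completely different route, and the $\sqrt d$ arises from a probabilistic argument rather than any $r$-versus-$r^2$ accounting. First one isolates the set $Y$ of vertices incident to at least $\sqrt d$ pre-colored leaves (so $|Y|\le\sqrt d$), colors $G[Y]$ and then the $Y$--$Z$ edges greedily using at most $d+2\sqrt d$ colors, and Vizing-colors the remaining graph $H=G[Z]$ by some $g:E(H)\to[d+1]$. The crucial step is to compose $g$ with a \emph{uniformly random permutation} $\sigma$ of $[d+1]$: for each $z\in Z$ the expected number of $H$-edges at $z$ whose permuted color conflicts with a pre-assigned color at the far endpoint is below $2\sqrt d$ (because every vertex of $Z$ sees at most $2\sqrt d$ already-colored edges to $L\cup Y$), and McDiarmid's concentration inequality for random permutations produces a single $\sigma$ for which this count is below $4.5\sqrt d$ at every relevant vertex simultaneously. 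The resulting conflict subgraph then has maximum degree at most $6.5\sqrt d$ and is recolored with that many fresh colors by Vizing again. The random permutation is precisely what prevents the pile-up of conflicts at a single vertex that breaks your greedy scheme.
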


The function $f$ is of interest because of the following relation to the measurable chromatic index of graphings given by Theorem~\ref{vizing theorem}. Let us call a set $X$ of vertices (in a finite or infinite graph) \emph{$r$-sparse} if for every distinct $x,y\in X$ 
the graph distance between $x$ and $y$ is strictly larger than $r$. For example, a set is $1$-sparse if
and only if it is independent.

\begin{theorem}\label{vizing theorem} For every $d \geq 1$ there is $r_0 = r_0(d)$ such that if $\GG=(V,\C B,E,\mu)$ is a graphing with maximum degree at most $d+1$ such that the set $J$ of vertices of degree $d+1$ is $r_0$-sparse, then $\chi_\mu'(\GG)\le d+f(d)$. If, furthermore, $\GG$ has no odd cycles, then
$\chi_\mu'(\GG)\le d+1$.
\end{theorem}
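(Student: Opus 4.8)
The plan is to cut the graphing into finite pieces of bounded size by Borel methods, to edge-color each piece by invoking Definition~\ref{df:f}, and to process the pieces in a Borel order so that the colorings agree along their common boundaries; a $\mu$-null set of edges may have to be deferred into the class $E_0$, which is the point at which the conclusion is only measurable rather than Borel.

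\textbf{Borel preliminaries.} Fix a radius $r=r(d)$, to be specified at the end, and set $r_0:=3r$. The graph on $V$ in which two vertices are joined whenever their $\GG$-distance is at most $r$ is Borel of degree bounded in terms of $d$ and $r$, so by a standard Borel argument it has a Borel maximal independent set; that is, $\GG$ has a Borel maximal $r$-sparse set $W$, and maximality makes $W$ an \emph{$r$-net}: every vertex lies within distance $r$ of $W$. Breaking ties with a fixed Borel linear order of $V$, assign each vertex to the nearest point of $W$; this gives a Borel partition $V=\bigcup_{w\in W}C_w$ into cells, each contained in the radius-$r$ ball about its center, hence of size bounded in terms of $d$ and $r$. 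Since $J$ is $r_0$-sparse with $r_0>2r$, no cell contains two vertices of $J$, so each $C_w$ has at most one vertex of degree $d+1$ while all its other vertices have degree at most $d$ in $\GG$. Finally, a suitable bounded power of the cell-adjacency graph on $W$ (an edge between $w,w'$ whenever some $\GG$-edge joins $C_w$ and $C_{w'}$) is Borel of bounded degree, so by Theorem~\ref{th:KST} it admits a Borel proper vertex-coloring with some finite number $K$ of colors; cells whose centers get the same color are far enough apart that the coloring choices made in one do not interfere with another.

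\textbf{Coloring the cells.} I would process the cells in rounds $1,\dots,K$ by the color of their center, handling all cells of a round simultaneously and Borel-measurably (each cell is finite, so one may take the lexicographically least admissible extension relative to the fixed Borel order). When a cell $C_w$ is reached, form the finite graph $G_w$ whose edges are the $\GG$-edges internal to $C_w$ together with, for every $\GG$-edge leaving $C_w$, a new pendant leaf at its endpoint inside $C_w$; the leaves coming from edges to cells handled in earlier rounds inherit the colors already assigned there. The graph $G_w$ has maximum degree at most $d+1$ with at most one vertex of degree $d+1$, so \emph{provided at most $d-1$ of its leaves are pre-colored}, Definition~\ref{df:f} extends the pre-coloring to all of $G_w$ using at most $d+f(d)$ colors. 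Carrying this out over all rounds yields a measurable (in fact Borel) edge-coloring of $\GG$ with at most $d+f(d)$ colors, and hence $\chi_\mu'(\GG)\le d+f(d)$.

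\textbf{The main obstacle, and the no-odd-cycle case.} The step I expect to be hardest is precisely the proviso italicized above: a single cell can send many edges across its boundary, so the plain round structure does not keep the number of \emph{already-colored} crossing edges at a cell below $d-1$. Overcoming this is where the choice of $r$, the $r_0$-sparseness of $J$, and a finer analysis of cell boundaries must be combined --- one route is to refine the decomposition over several scales $r_1<r_2<\dots$, reorganize the processing order, and orient the crossing edges so that at each cell only a controlled few are colored before that cell is treated, letting the remaining exceptional edges accumulate over scales into a set whose vertex-support is $\mu$-null (here the measure-preservation of the maps $\phi_i$ enters, through a mass-transport estimate and a Borel--Cantelli argument) and can be placed into $E_0$. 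For the last assertion, the same scheme is run, but since $\GG$ has no odd cycles each finite $G_w$ is bipartite, and for bipartite graphs the extension step of Definition~\ref{df:f} can be performed with just $d+1$ colors --- a bipartite analogue of Conjecture~\ref{cj:f}, provable here by K\H{o}nig's theorem together with a short augmenting argument, the one spare color absorbing both the at most one vertex of degree $d+1$ and the clashes at cell boundaries --- giving $\chi_\mu'(\GG)\le d+1$.
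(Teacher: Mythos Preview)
Your proposal contains a genuine gap that you yourself flag but do not close: the cell $C_w$ can send on the order of $d^{r}$ edges across its boundary, so when $C_w$ is processed the number of already-colored leaves of $G_w$ need not be at most $d-1$, and Definition~\ref{df:f} simply does not apply. The multi-scale/Borel--Cantelli sketch you offer does not address the core difficulty: controlling a \emph{null set} of errors is not the issue --- the issue is that \emph{every} bounded-radius cell in a $d$-regular graphing generically has far more than $d-1$ boundary edges, so no choice of scales makes the hypothesis of Definition~\ref{df:f} hold on any cell of positive measure. In fact, finite connected sets with at most $d-1$ outgoing edges are exactly the ``stumps'' the paper singles out; a graphing can be stump-free, in which case \emph{no} cell of your decomposition satisfies the needed bound.

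Your bipartite assertion is also unjustified. Even in a tree of maximum degree $d$, a precoloring of more than $d-1$ leaves need not extend with $d+1$ colors. For instance, with $d=4$ take a path $v_1v_2v_3$, attach to $v_1$ leaves precolored $1,2,3$, to $v_2$ leaves precolored $3,4$, and to $v_3$ leaves precolored $1,2,3$; then both $v_1v_2$ and $v_2v_3$ are forced to color $5$, a contradiction. So the ``bipartite analogue of Conjecture~\ref{cj:f}'' you invoke is false once the number of precolored leaves is unrestricted.

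The paper takes a completely different route. It argues by induction on $d$: at each step one finds a Borel matching $M$ that covers all degree-$(d{+}1)$ vertices and, up to an $r$-sparse exceptional set, all degree-$d$ vertices (Theorem~\ref{theorem:inductive_step}); removing $M$ drops the effective degree bound and one recurses. The matching is produced by iterated augmenting-path improvements whose stabilization is forced by an expansion estimate (Theorem~\ref{shortalternating_theorem}); the key trick enabling expansion is to fix an $r'$-dense set $K$ near which a single unmatched high-degree vertex is tolerated. The function $f$ enters only at the very end, to color the interiors of the removed stumps --- which, \emph{by definition}, have at most $d-1$ outgoing (hence precolored) edges, so Definition~\ref{df:f} applies there legitimately. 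In the no-odd-cycle case no stumps are needed and the induction yields $d+1$ colors directly.
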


\begin{remark}\label{rm:evend} Laczkovich \cite{laczkovich:88} for $d=2$ and Conley and Kechris~\cite[Section~6]{conley+kechris:13}
for every even $d\ge 4$ proved that  there exists a bipartite $d$-regular graphing $\GG$ such
that every Borel matching misses a set of vertices of positive measure. Hence, $d+1$ colors are 
necessary in Theorem~\ref{th:main} for such $d$, even in the bipartite case.
If Conjecture~\ref{cj:f} is true, then $d+1$ colors always suffice.
\end{remark}

This paper is organized as follows. Section~\ref{notation} collects some frequently used notation.
Basic properties of graphings that are needed in the proofs are discussed in Section~\ref{basic}.  
Section~\ref{MainInduction} formally describes the main inductive step (roughly, removing 
a matching $M$ that covers high degree vertices) and how this yields Theorem~\ref{vizing theorem}. 
Section~\ref{sec:augmenting} shows how to construct the required matching $M$, provided there is 
a sequence of matchings $(M_i)_{i=0}^\infty$ that stabilizes ``fast''. 
The main bulk of the proof appears in 
Section~\ref{section:shortaugmenting} where we inductively construct $M_{i+1}$ by augmenting
$M_i$ along paths of length at most $2i+1$. The fast stabilization of $M_i$'s 
is derived from a variant of the expansion property. This is relatively straightforward for
the case when there are no odd cycles and is done in Section~\ref{NoOddCycles}. The remainder
of Section~\ref{section:shortaugmenting} deals with the general case.
Lemma~\ref{pre-colored} is proved in Section~\ref{se:pre-colored}.
An application of Theorem~\ref{vizing theorem} (on the minimum number of
maps that generate a given graphing) is presented in Section~\ref{MinK}.

When presenting the long and difficult proof of Theorem~\ref{vizing theorem},
we tried to split it into smaller steps. (For example, Theorem~\ref{vizing theorem} follows from Theorem~\ref{theorem:inductive_step}, which in turn follows from~Theorem~\ref{shortalternating_theorem}.) Hopefully, this
makes the proof easier to follow and understand.

\section{Some notation}\label{notation}

For reader's convenience, we collect various notation here, sometimes repeating definitions that appear elsewhere.  

Let $G=(V,E)$ be a graph. 
For $A,B\subseteq V$, the \emph{distance} $\dist(A,B)$ is the shortest length of a path connecting a vertex in $A$ to a vertex in $B$.  Also, $E(A,B):=E\cap (A\times B)$ denotes the set of adjacent pairs
$(a,b)$ with $a\in A$ and $b\in B$. Note that we take the ordered pairs, so
that $|E(A,B)|$ counts the edges inside  $A\cap B$ twice.  The \emph{complement} of $A\subseteq V$
is $A^c:=V\setminus A$.   The \emph{$k$-neighborhood} 
of $A$ is
 $$
 N_{k}(A):=\{x\in V\mid \dist(\{x\},A)\le k\}.
 $$
 Recall that the set $A$ is called \emph{$r$-sparse} if every two distinct 
vertices of $A$ are at distance strictly larger than~$r$. It is \emph{$r$-dense} if every vertex of $V$
is at distance at most $r$ from $A$ (equivalently if $N_{r}(A)=V$).
 The \emph{degree} $\deg(x)$ of $x\in V$ is the number of edges in $E$ 
 containing $x$. The \emph{maximum degree} is $\Delta(G):=\max\{\deg(x)\mid 
 x\in V\}$. 
For a set of edges $C\subseteq E$, let 
$V(C):=\cup_{(x,y)\in C} \{x,y\}$ consist of vertices that are covered by at least one edge of~$C$.

We may omit the set-defining brackets, for example, abbreviating
$N_{1}(\{x\})$ to $N_{1}(x)$. Also, we write $\I N:=\{0,1,2,\dots\}$ and $[k]:=\{1,\dots,k\}$.
When applying combinatorial arguments to $(V,E)$, we will usually view $E$ as a set of unordered pairs and write e.g.\ $\{x,y\}\in E$ to mean $(x,y),(y,x)\in E$.

For a path $p$, $\len{p}$ will denote its \emph{length}, i.e.\ the
number of edges in $p$.  The path $p$ is called \emph{even} (resp.\ \emph{odd}) if its length $\len{p}$ is even (resp.\ odd).

\section{Basic properties of graphings}\label{basic}

This section discusses various properties that every graphing $\C G=(V,\C B,E,\mu)$ as in
Definition~\ref{df:graphing} possesses. Their proofs can be found
in Sections 18.1--18.2 of Lov\'asz' book~\cite{lovasz:lngl}. In fact, some of
these facts are immediate consequences of the Uniformization Theorem of Lusin-Novikov, see e.g.~\cite[Theorem~18.10]{kechris:cdst}.

\hide{
One point of
view on Definition~\ref{df:graphing} is to consider $E$ as a finite union of Borel matchings $M_i=\{ \{x,y\}\mid x\not =y\ \&\ \phi_i(x)=y\}$, $i\in[k]$. Clearly, by shrinking the sets $A_i$ and $B_i$, one can additionally
require that $M_i\cap M_j=\emptyset$ for all $i\not=j$.
}%

Since each $\phi_i$ in Definition~\ref{df:graphing} is measure-preserving, we have that 
 \begin{equation}\label{eq:MeasPres}
 \int_{A} \deg_B(x)\dd\mu(x)=\int_B\deg_A(x)\dd\mu(x),\qquad\mbox{for all $A,B\in\C B$},
 \end{equation}
 where e.g.\ $\deg_A(x)$ is the number of edges that $x\in V$ sends to $A\in\C B$. (It readily follows
from Definition~\ref{df:graphing}  that the function $\deg_A:V\to\I N$ is Borel.) 
When we make a finite graph $(V,E)$ into a graphing on $|V|$ atoms, then (\ref{eq:MeasPres}) 
corresponds to the trivial fact that the number of edges between sets $A,B\subseteq V$ can be counted
either from $A$ or from~$B$.

Conversely, it is known (see \cite[Theorem~18.21]{lovasz:lngl}) that if a measure $\mu$ on a Borel graph
$(V,\C B,E)$ satisfies (\ref{eq:MeasPres}), then $\GG=(V,\C B,E,\mu)$ is a graphing. (In fact,
one can take  $k=2\Delta(\GG)-1$ in Definition~\ref{df:graphing}; not surprisingly,
Theorem~\ref{th:KST} is used in the proof.) 

If $A\subseteq V$ is Borel, then the set $N_1(A)\subseteq V$ is also Borel, as it is the union of $A$
and the Borel sets $\phi_{i}^{\sigma}(A):=\{x\mid \exists\, y\in A\ \phi_i^\sigma(y)=x\}$ over $i\in [k]$ and $\sigma\in\{-1,1\}$. Similarly,
it follows that  ``locally'' defined subsets of $V$, 
such as for example the set of vertices that belong to a
triangle, are Borel (\cite[Exercise 18.8]{lovasz:lngl}). 

Also,
$(V,\C B,E^m)$ is a Borel graph, where $E^m$ consists
of pairs of distinct vertices at distance at most $m$ in $E$. Indeed, 
$E^m$ can be represented as in~\eqref{eq:E} for finitely many Borel maps, each being
a composition of at most $m$ of the maps $\phi_1^{\pm1},\dots,\phi_k^{\pm1}$
restricted to the (Borel) set where this composition is defined.
Combining this observation with Theorem~\ref{th:KST}, we obtain the following useful corollary.

\begin{corollary}\label{cr:label} For every Borel graph $\GG=(V,\C B,E)$ and $k\in \I N$ there is a
\emph{$k$-sparse labeling}, that is, a Borel function $\ell:V\to [m]$ for some $m\in\I N$
such that each part $\ell^{-1}(i)$ is $k$-sparse.\qed\end{corollary}

In fact, in the above corollary it suffices to take $m=1+\Delta(\GG)\sum_{i=1}^{k} (\Delta(\GG)-1)^{i-1}$, the maximum possible 
size of the $k$-neighborhood of a vertex.

The following proposition (see \cite[Lemma~18.19]{lovasz:lngl}) implies that if we construct objects inside a graphing in a Borel way, then any subgraph that we encounter is still a graphing. This will be implicitly used many times here 
(e.g.\ when  we remove a Borel matching from a graphing).

\begin{propo}\label{pr:subgraphing} If 
$\GG=(V,\C B,E,\mu)$ is a graphing and $E'\subseteq E$ is a Borel symmetric subset, then $\GG'=(V,\C B,E',\mu)$ is a graphing.\end{propo}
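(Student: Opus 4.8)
The plan is to restrict each of the generating partial isomorphisms of $\GG$ to the part of its domain that still produces an edge of $E'$. I would fix triples $(\phi_1,A_1,B_1),\dots,(\phi_k,A_k,B_k)$ witnessing, via \eqref{eq:E}, that $\GG$ is a graphing. For each $i\in[k]$ the assignment $x\mapsto(x,\phi_i(x))$ is a Borel map from $A_i$ into $V\times V$, so the set
$$
A_i':=\{x\in A_i\mid (x,\phi_i(x))\in E'\}
$$
is Borel, being the preimage of the Borel set $E'$. I would then put $B_i':=\phi_i(A_i')$, which is Borel because $\phi_i$ is a Borel isomorphism of $A_i$ onto $B_i$, and let $\phi_i'$ be the restriction of $\phi_i$ to $A_i'$. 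Then $\phi_i'$ is an invertible Borel bijection between $A_i'$ and $B_i'$ that preserves $\mu$: for every Borel $S\subseteq A_i'$ one has $\mu(\phi_i'(S))=\mu(\phi_i(S))=\mu(S)$ since $\phi_i$ is measure-preserving.

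Next I would check that the triples $(\phi_i',A_i',B_i')_{i\in[k]}$ generate exactly $E'$ through \eqref{eq:E}. If $\phi_i'(x)=y$ for some $i$, then $x\in A_i'$, so $\{x,y\}=\{x,\phi_i(x)\}\in E'$, and the case $\phi_i'(y)=x$ is symmetric; hence the edge set produced by these triples is contained in $E'$. Conversely, given $\{x,y\}\in E'\subseteq E$, \eqref{eq:E} applied to the original triples yields some $i\in[k]$ with $\phi_i(x)=y$ or $\phi_i(y)=x$, say the former; since $\{x,y\}\in E'$ and $E'$ is symmetric, $(x,\phi_i(x))\in E'$, so $x\in A_i'$ and $\phi_i'(x)=y$. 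Thus $E'$ is the edge set generated by $(\phi_i',A_i',B_i')_{i\in[k]}$, and as $\mu$ is unchanged this shows $\GG'=(V,\C B,E',\mu)$ is a graphing.

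I do not expect a genuine obstacle here; the only two points requiring (routine) care are that $A_i'$ is Borel — which uses that both $\phi_i$ and $E'$ are Borel — and that the restriction of a measure-preserving Borel bijection to a Borel subset of its domain (with the corresponding image set) is again measure-preserving. An alternative would be to invoke the characterization that a Borel graph carrying a measure satisfying \eqref{eq:MeasPres} is a graphing and then verify \eqref{eq:MeasPres} for $E'$, but exhibiting the generating maps explicitly as above seems more transparent.
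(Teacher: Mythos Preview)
Your proof is correct and follows exactly the approach the paper sketches: take the measure-preserving maps $\phi_1,\dots,\phi_k$ representing $\GG$ and restrict each to the Borel set $A_i'=\{x\in A_i\mid (x,\phi_i(x))\in E'\}$. The paper's proof is a one-line sketch saying precisely this, and you have carefully filled in the routine verifications (Borelness of $A_i'$, measure-preservation of the restriction, and both inclusions for the generated edge set).
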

 \begin{proof} Let measure-preserving maps $\phi_1,\dots,\phi_k$ represent $\GG$ as in Definition~\ref{df:graphing}. Then their appropriately defined restrictions $\phi_1',\dots,\phi_k'$ to $E'$ represent $\GG'$. One can show directly
 (or invoke a classical theorem of Lusin~\cite{lusin:30}) that the range and the domain
 of each $\phi_i'$ are Borel.\qed\end{proof}

\begin{lemma}\label{lm:finite} Let $(V,\C B,E)$ be a Borel graph of maximum degree at most $d+1$ such that
no two vertices of degree $d+1$ are adjacent.  Then we can edge-color
all finite connectivity components of $\GG$ in a Borel way, using at most $d+1$ colors. 
\end{lemma}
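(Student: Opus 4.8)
The plan is to reduce this to the finite case of Vizing's theorem applied componentwise, with the only nontrivial issue being to perform the coloring \emph{Borel-measurably}. First I would observe that the set $W\subseteq V$ of vertices lying in a finite connectivity component is Borel: the component of $x$ is finite iff there is some $k$ with $|N_k(x)| = |N_{k+1}(x)|$, and by Lemma~\ref{lm:BorelChar} (applied iteratively to $E$) each map $x\mapsto |N_k(x)|$ is Borel, so $W = \bigcup_k \{x : |N_k(x)| = |N_{k+1}(x)|\}$ is Borel. Restricting attention to the induced subgraph on $W$ (still a graphing by Proposition~\ref{pr:subgraphing}), every component is a finite graph with maximum degree $\le d+1$ in which no two degree-$(d+1)$ vertices are adjacent, so each component admits a proper edge-coloring with $d+1$ colors by Vizing's theorem; in fact, since the degree-$(d+1)$ vertices form an independent set, a standard strengthening (subdividing each degree-$(d+1)$ vertex, or directly a Vizing-fan argument) confirms $d+1$ colors suffice even on the nose, but in any case Vizing alone gives the bound $(d+1)+1$ which already exceeds nothing we need — so I must be a little careful and invoke the version that handles one "excess" unit per component. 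The cleaner route is: on each finite component, the maximum degree is at most $d+1$, and Vizing gives a proper edge-coloring with at most $d+2$ colors in general, but because degree-$(d+1)$ vertices are pairwise nonadjacent one can get $d+1$; I would simply cite this as the relevant finite fact.

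The real content is choosing these colorings coherently across the continuum-many components so that the resulting coloring of $E\cap(W\times W)$ is Borel. The standard device is to use a $k$-sparse labeling from Corollary~\ref{cr:label}: fix a Borel function $\ell:V\to[m]$ whose fibers are (say) $1$-sparse in a suitable auxiliary graph, then order vertices within each component by first comparing $\ell$-values and breaking ties by a fixed Borel linear order on $V$ (which exists on a standard Borel space). More robustly, one runs a greedy/canonical edge-coloring algorithm on each finite component that depends only on this Borel ordering of the component's vertices and edges: process edges in the induced lexicographic order, and at each edge apply the deterministic Vizing augmenting-path (Vizing fan / Kempe chain) procedure, always making the "smallest" available choice. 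Since a finite component is seen entirely within some $N_k(x)$, the color ultimately assigned to an edge $e$ is a function of the isomorphism type of $e$'s component together with the $\ell$-labels and the Borel order restricted to it — a quantity computable from a bounded-radius neighborhood, hence Borel by the usual "locally defined sets are Borel" principle (Lemma~\ref{lm:BorelChar} and its consequences noted after Corollary~\ref{cr:label}). Thus each color class is Borel, giving the desired Borel edge-coloring of all finite components with $d+1$ colors.

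The main obstacle — and it is more bookkeeping than genuine difficulty — is verifying that this component-wise algorithm produces a \emph{globally Borel} partition of the edge set, i.e.\ that "run canonical Vizing on my finite component" is a Borel operation. The key point to nail down is that, because components in $W$ are finite, for each edge $e$ there is $k$ (depending on $e$) with the whole component contained in $N_k(e)$; partitioning $W$ according to the component size $n$ (a Borel partition, as each $\{x : |\text{component of }x| = n\}$ is Borel by the argument above) lets us treat each size class separately, and on the size-$n$ class the output depends only on $N_n(e)$ together with the restriction of $\ell$ and of the fixed Borel order, so it is Borel. I expect a few lines to make precise that "the restriction of a Borel linear order to a bounded neighborhood, together with the component's edge set, determines the coloring" — this is exactly the kind of local Borel computation the paper has already set up, so it should go through cleanly. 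If one prefers to avoid a global Borel order entirely, the $m$-valued sparse labeling from Corollary~\ref{cr:label} with large enough sparsity already suffices to linearly order each finite component canonically, which is the version I would write up.
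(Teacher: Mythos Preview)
Your proposal is correct and follows essentially the same route as the paper: partition the finite components by size, use a sufficiently sparse labeling from Corollary~\ref{cr:label} so that each component receives distinct labels, and then apply a fixed isomorphism-invariant coloring rule to every labeled component. Two small points where the paper is crisper: it takes a separate $i$-sparse labeling for each component size $i+1$ (so labels are automatically distinct and no auxiliary Borel linear order is needed), and it names the relevant finite fact as Fournier's extension of Vizing's theorem ($\chi'(G)\le\Delta(G)$ whenever no two maximum-degree vertices are adjacent).
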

\begin{proof} 
For $i=1,2,\dots$, we color all components with exactly $i+1$ vertices. Given $i$,
fix an $i$-sparse labeling $\ell:V\to\I N$, which exists by Corollary~\ref{cr:label}. The labels in each component with $i+1$ vertices are all different. Choose an isomorphism-invariant rule how to edge-color each labeled component with colors from $[d+1]$. Note that at least one coloring exists by the extension of Vizing's theorem by Fournier~\cite{fournier:78} that $\chi'(G)\le \Delta(G)$ if no two vertices of maximum degree are adjacent
(see also Berge and Fournier~\cite{berge+fournier:91} for a short proof).
Apply this rule consistently everywhere. Each color class is Borel, as the countable union over $i$ of Borel sets.
\qed\end{proof}

One can define the measure $\muC$ on $(V\times V,\C B\times\C B)$ by 
stipulating that 
 $$
 \muC(A\times B)=\int_A \deg_B(x)\dd\mu(x),\qquad\mbox{ for $A,B\in\C B$}
 $$
 and extending $\muC$ to the product $\sigma$-algebra $\C B\times\C B$ by Caratheodory's theorem. 
It can be shown that $\muC((V\times V)\setminus E)=0$, see \cite[Lemma 18.14]{lovasz:lngl}. 
Thus, in other words, $\muC$ is the
product of $\mu$ with the counting measure, restricted to $E$. Property~(\ref{eq:MeasPres}) shows
that $\muC$ is symmetric: $\muC(A\times B)=\muC(B\times A)$. 
 \hide{An equivalent definition, once
we make the matchings $M_i$ disjoint, is to let
 $$
 \muC(X):=\sum_{i=1}^k \mu\big(x \in A_i\mid \{x,\phi_i(x)\}\in X\big),\qquad X\in\C B\times\C B.
 $$
 }

If $X\subseteq V$ has measure zero, then $Y=\{y\in V\mid \dist(y,X)<\infty\}$, the union of all connectivity components intersecting $X$, also has measure zero.  Indeed, $Y$ is  
the countable union of the images of the null-set $X$ by finite 
compositions of $\phi_1^{\pm1},\dots,\phi_k^{\pm 1}$, where the maps $\phi_i$ are as in Definition~\ref{df:graphing}. The analogous claim applies to any $\muC$-null-set $X\subseteq E$.  
We will implicitly use this in the proof of Theorem~\ref{vizing theorem}: whenever we encounter
some  null-set of ``errors'', we move all edges from connectivity components with errors 
to the  exceptional part $E_0\subseteq E$
from the definition of $\chi'_\mu(\GG)$.  

One could define yet another chromatic index $\chi_{\mu}^*(\GG)$, where \textbf{every} edge has to be colored but each color class is only a \emph{measurable} subset of $E$ (that is, belongs to the completion of $\C B\times\C B$ with respect to $\muC$). It is easy to come up with an example when $\chi_{\mu}^*$ is strictly larger
than $\chi_{\mu}'$ (e.g.\ add a null-set
of high-degree vertices). However, considering $\chi_{\mu}^*$ would give nothing new in the context of 
Theorem~\ref{vizing theorem} because we can repair any null-set of errors by recoloring  all components containing them via the Axiom of Choice. (Note that we need at most
$d+1\le d+f(d)$ colors by Fournier's theorem~\cite{fournier:78}.)  We
restrict ourselves to $\chi_{\mu}'$ for convenience, so that we can stay within the Borel universe
(namely, all sets that we will encounter in the proof of Theorem~\ref{vizing theorem} are Borel).


\section{The main induction}\label{MainInduction}

Before we proceed with the proof of Theorem~\ref{vizing theorem}, it may be instructive to mention why known proofs of Vizing's theorem do not seem to extend to graphings. These proofs proceed by
some induction, typically on $|E|$. When we extend the current edge-coloring to 
a new edge $\{x,y\}$, we may need to swap colors in some maximal 2-color path 
$p$ that starts with $x$ or $y$. Unfortunately, we do not have 
any control over the length of $p$. This causes a problem when we do countably many iterations  in a graphing (each time
swapping a Borel family of such paths in parallel) because the set of edges that flip their color
infinitely often may have positive measure.

On the other hand, K\H{o}nig's theorem for finite graphs can be proved with much less back-tracking: take any matching $M$ that covers all vertices of maximum degree, color it with a new color and apply induction to the remaining graph $\GG\setminus M$. 
We prove Theorem~\ref{vizing theorem} by a similar induction on $\Delta(\GG)$. 
The difficulty with this approach is that even a finite (non-bipartite) graph need not have a matching covering
all vertices of maximum degree. So instead we change the inductive assumption: $\GG$ has 
maximum degree at most $d$ except an  $r_0(d)$-sparse set of vertices of degree $d+1$,
where $r_0:\I N\to\I N$ is a fast-growing function. Thus we want to find a matching $M$ that
covers all vertices of degree $d+1$ and ``most'' vertices of degree $d$, so that $\GG\setminus M$ satisfies
the sparseness assumption for $d-1$. This may still be impossible. 
However, if we remove all so-called stumps (to be colored 
later using Lemma~\ref{pre-colored}) and, for some technical reasons, all finite components,  then the desired matching $M$ exists. 

In the rest of this section, we define what a stump is, state the main inductive step (Theorem~\ref{theorem:inductive_step}) and show how it implies Theorem~\ref{vizing theorem}.

\begin{definition} Given a graph $G = (V, E)$ and an integer $d$ such that
$\Delta(G)\le d+1$, we call a set $A\subseteq
V$ lying inside some infinite connectivity component $C$ of $G$ a \emph{stump} if the number of vertices in $A$ is finite,
$|A|\ge 2$, 
$|E(A,A^c)|\le d-1$, and every
vertex of $A$ has degree $d$ in $G$ except at most one vertex of degree $d+1$.
 \end{definition}
 

\begin{theorem}\label{theorem:inductive_step}
For every $d \geq 2$ and $r \geq 1$, there is $r_1=r_1(d,r)$ such that the following holds. Let $\GG=(V,\C B,E,\mu)$ be a graphing with degree bound $d+1$ that has no finite components.
Suppose also that $\GG$ has no odd cycles or has no stumps. If  the set $J\subseteq V$ 
of vertices of degree $d+1$ is $r_1$-sparse, then there is a Borel matching $M$ such that,
up to removing a null-set, $\GG\setminus M$ has maximum degree
at most $d$ and its set of degree-$d$ vertices is $r$-sparse.
\end{theorem}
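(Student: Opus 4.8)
The plan is to build the matching $M$ as a measurable limit of a sequence of Borel matchings $(M_i)_{i\ge 0}$ obtained by augmentation, where $M_{i+1}$ is obtained from $M_i$ by flipping a Borel family of augmenting paths of length at most $2i+1$. First I would set up the target: a vertex $x$ is "bad" for $M$ if $x$ has degree $d+1$ in $\GG$ and is $M$-unmatched, or if $x$ has degree $d$ in $\GG$ and is $M$-unmatched and lies within distance $r$ of another such vertex. We want $M$ Borel with the set $U(M)$ of $M$-unmatched vertices of degree $\ge d$ being $r$-sparse and containing no vertex of degree $d+1$, except on a null-set of components which we discard into $E_0$. Start with $M_0=\emptyset$. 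At stage $i$, for each bad configuration I would look for an augmenting path of length $\le 2i+1$ from a "deficient" vertex (degree $d+1$ unmatched, or a degree-$d$ unmatched vertex too close to another) to a vertex that is free to absorb the augmentation (an unmatched vertex of degree $<d$, far from other deficient vertices); using a $(2i+1)$-sparse Borel labeling from Corollary~\ref{cr:label}, one can select such a family of vertex-disjoint augmenting paths in a Borel, isomorphism-invariant way, and flip them all simultaneously to get $M_{i+1}$. Because the labeling makes the chosen paths pairwise far apart, each edge changes its $M_i$-membership at most once per "local round," and the key point is that the sparseness hypothesis on $J$ (with $r_1$ chosen large depending on $d$ and $r$) forces, component by component, the augmentation process to terminate quickly.

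The heart of the argument — and I expect this to be the main obstacle — is proving fast stabilization: that the set of edges whose membership in $M_i$ changes for arbitrarily large $i$ is null. This is where the distinction between the two cases (no odd cycles vs.\ no stumps) enters. In the bipartite-flavored "no odd cycles" case, augmenting paths behave essentially as in König's theorem: alternating walks from an unmatched high-degree vertex either reach a low-degree sink or wrap into a component with a strong expansion property, and a Hall-type / flow counting argument (using the measure-preserving identity~(\ref{eq:MeasPres}) to control $\muC$) shows the measure of "still-deficient" vertices decays geometrically in $i$, so $\sum_i \mu(\text{deficient at stage } i)<\infty$ and Borel–Cantelli gives the null exceptional set. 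In the general case one must cope with odd cycles, which can "trap" an augmenting search; here the hypothesis that $\GG$ has no stumps is exactly what guarantees that any finite set $A$ that locally looks like a dead end has $|E(A,A^c)|\ge d$, giving enough outgoing edges to continue the augmenting search and eventually escape to a genuine low-degree vertex. The quantitative expansion needed — that within distance $r_1$ of any deficient vertex there is a reachable "absorbing" vertex, uniformly — is what pins down the growth rate of $r_1(d,r)$; I would prove it by a discharging / counting argument: if no absorbing vertex existed within radius $r_1$, the alternating-reachable set would be a near-regular finite subgraph with too few boundary edges, contradicting either the no-stump hypothesis or (in the presence of an odd cycle whose parity we can exploit) a parity count.

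Once fast stabilization is established, I would define $M := \lim_i M_i$ pointwise $\muC$-almost everywhere (an edge is in $M$ iff it is in $M_i$ for all large $i$); this is Borel as a countable combination of the Borel $M_i$, it is a matching since each $M_i$ is and membership stabilizes a.e., and by Proposition~\ref{pr:subgraphing} (and the remarks on moving whole components into $E_0$) we may assume it is genuinely Borel after discarding a null union of components. It remains to check the two output conditions. No unmatched vertex of degree $d+1$ survives: such a vertex would be deficient at every stage, but we showed deficiency persists only on a null-set, which we have removed. The set of unmatched degree-$d$ vertices is $r$-sparse: two such vertices within distance $r$ would form a bad configuration persisting to the limit, again only possible on the discarded null-set. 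Finally I would verify that $\GG\setminus M$ still has no finite components after discarding the null components (finite components of $\GG\setminus M$ either were finite components of $\GG$, excluded by hypothesis, or got created by removing $M$-edges — but $M$ being a matching removes at most a perfect matching's worth of edges, and any new finite component would be a stump-like set, handled either by the no-stump hypothesis or, in the no-odd-cycle case, absorbed into the error set), and that $J$'s image under the degree-lowering has the claimed sparseness, which is immediate since vertices of degree $d+1$ in $\GG$ are $r_1$-sparse and $r_1 \ge r$. The precise bookkeeping of which null-sets get discarded, and checking Borelness at each augmentation step, are the routine-but-lengthy parts deferred to Sections~\ref{sec:augmenting}–\ref{section:shortaugmenting}.
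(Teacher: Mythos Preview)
Your general framework---build $M$ as a limit of Borel matchings $M_i$ obtained by flipping augmenting paths of bounded length, then use Borel--Cantelli to control the non-stabilizing set---matches the paper. But the proposal has a genuine gap at the step you flag as ``the heart of the argument'': the claim that the measure of deficient vertices decays geometrically in $i$. The no-stump hypothesis only tells you that each individual finite set $A$ of high-degree vertices has $|E(A,A^c)|\ge d$; it does \emph{not} give you vertex expansion of the alternating-reachable set $X_n$ in an amenable graphing (think of a $d$-regular graphing arising from a $\mathbb Z^k$-action: no stumps, yet $X_n$ grows only polynomially). Without genuine expansion, $\mu(U_{M_i})$ may decay too slowly for $\sum_i (2i+2)\mu(U_{M_i})$ to converge, and your limit matching need not exist.

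The paper repairs this with an idea your sketch does not contain: before doing any augmentation, it fixes a Borel set $K$ that is $(r+2)$-sparse from $J$ but $r'$-\emph{dense} in $V$ (for $r'=3r+7$), and declares a single uncovered degree-$d$ vertex in each star $N_1(x)$, $x\in K$, to be acceptable. The set $U_M$ of ``unhappy'' vertices is then defined relative to $K$, and augmenting paths are allowed to be \emph{even} and terminate at a vertex of a complete star (moving the error there). The payoff is Lemma~\ref{lemma:denseset}: the complement $O_n$ of the reachable set is always $(r'+1)$-dense, which by the elementary Lemma~\ref{lemma:expansion} forces $\muC(E(X_n,O_n))\ge d^{-r'}\mu(X_n)$---this is the expansion that drives exponential decay (packaged as Theorem~\ref{shortalternating_theorem}). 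Two smaller points: the paper takes $M_0$ to cover all degree-$\ge d$ vertices in $N_{r_1/4}(J)$ (not $M_0=\emptyset$), so that $U_{M_i}$ stays far from $J$; and your final paragraph about checking that $\GG\setminus M$ has no finite components is not needed---the theorem makes no such claim.
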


Let us show how Theorem~\ref{theorem:inductive_step} implies Theorem~\ref{vizing theorem}.\medskip

\noindent{\sc Proof of Theorem~\ref{vizing theorem}}. We use induction on $d$. 
For $d=1$ it is true with $r_0(1) = 1$: each component can have at most three vertices
so the required 2-edge-coloring exists by Lemma~\ref{lm:finite}. 
(Note that $f(1)=1$.) 

Let $d\ge 2$. Let $r:=r_0(d-1)$ be the value returned by Theorem~\ref{vizing theorem} for $d-1$,
using the inductive assumption. Let $r_0:=r_1(d,r)$ be the value returned by Theorem~\ref{theorem:inductive_step} on input $(d,r)$. We claim
that this $r_0$ suffices.
Take any graphing $\GG$ as in Theorem~\ref{vizing theorem}.
Let $J$ denote the $r_0$-sparse set of vertices of degree $d+1$ in~$\GG$.

First, let us do the case when $\GG$ has no odd cycles.  Do \emph{clean-up}, that is, remove all finite components from $\GG$ and edge-color them with $d+1$ colors using Lemma~\ref{lm:finite} (whose assumptions are satisfied since $J$ is an independent set). Now, Theorem~\ref{theorem:inductive_step} gives a Borel matching $M$ such that, up to
removing a null-set, $\GG':=\GG\setminus M$ has no vertices of degree larger than $d$ while its degree-$d$
vertices form an $r$-sparse set. So, by induction, we can color $\GG'$ with $d$ colors, and using the last color for $M$ we get a Borel $(d+1)$-edge-coloring of $\GG$ a.e., as required.

In the general case, we additionally make sure that there are no stumps  before we apply Theorem~\ref{theorem:inductive_step}. Namely, for each integer $i\ge 1$
in the increasing order of $i$, we fix a $2i$-sparse labeling. 
For each isomorphism type of a labeled
stump that has exactly $i+1$ vertices and spans a connected subgraph, pick all such stumps $A$ in $\GG$ and remove all edges inside each $A$. (Note that we keep all edges between $A$ and its complement $A^c$.) 
Clearly, after we have removed a stump, all its vertices have degree at most $d-1$. In particular, none of them can belong to a stump now. Also, every two stumps that were removed simultaneously are 
vertex-disjoint since the labeling was sufficiently sparse. The final graphing has no stumps because for every stump $A$ there is a stump $A'\subseteq A$ that spans a connected subgraph (and our procedure considers $A'$ at some point). 

Having removed all stumps,  we do clean-up (that is, we remove and edge-color all finite components of the current graphing). Denote the remaining graphing by $\GG'$. It has degree bound $d+1$ and the set of vertices of degree $d+1$ is still  $r_0$-sparse in $\GG'$. But $\GG'$ has no stumps nor finite components, so we can apply Theorem~\ref{theorem:inductive_step} as above and inductively obtain a Borel edge-coloring of $\GG'$ a.e.\
with $d -1+ f(d-1) +1 = d+f(d-1)$ colors. 
It remains to color edges inside the stumps that we have removed. By the vertex-disjointness,
we can treat each stump independently of the others.
The colors on the at most $d-1$ 
edges that connect the stump to its complement are already assigned.  The definition of $f$ 
(Definition~\ref{df:f})  shows that this pre-coloring can be extended to a $(d+f(d))$-coloring of the whole 
stump. This again can be done in a Borel way, by applying some fixed rule consistently. Finally since $f(d) \geq f(d-1)$, we get a Borel $(d+f(d))$-edge-coloring of $\GG$ a.e., as desired.\qed\medskip


\section{Proof of Theorem~\ref{theorem:inductive_step}}\label{sec:augmenting}

Here we present the proof of Theorem~\ref{theorem:inductive_step}, by reducing it to
Theorem~\ref{shortalternating_theorem} that in turn will be proved in Section~\ref{section:shortaugmenting}.

It is known that a $d$-regular expander graphing that is bipartite or has no edge-cuts with fewer
than $d$ edges admits a measurable perfect matching. This has been shown by Lyons and Nazarov~\cite{lyons+nazarov:11} for the former case and Cs\'oka and Lippner~\cite{csoka+lippner}  for the latter case. 
What follows is an adaptation of these proofs to allow a sparse set of vertices of degree $d+1$. 
As we will see, these ``exceptional'' vertices do not
cause any considerable difficulties. The real problem is that
our graphing $\GG$ need not be an expander!  Probably, the most crucial observation of this paper is that we can make the graphing behave like an expander at the expense of designating a small set $K$ of vertices around each of which at most one error (an unmatched degree-$d$ vertex) is allowed. 
As we will see in Lemma~\ref{lemma:expansion}, if $K$ is $O(1)$-dense then 
$\mu(N_{1}(X))= (1+\Omega(1))\, \mu(X)$ for every $X\subseteq K^c$, that is, 
sets disjoint from $K$ expand in measure.  
Theorem~\ref{shortalternating_theorem} then shows that such expansion is enough 
to obtain the matching $M$ required in 
Theorem~\ref{theorem:inductive_step}.\medskip 

\noindent{\sc Proof of Theorem~\ref{theorem:inductive_step}.} Given $d$ and 
$r$, let $r':=3r+7$ and let $r_1$ be sufficiently large. Let $\GG=(V,\C 
B,E,\mu)$ satisfy all assumptions of Theorem~\ref{theorem:inductive_step}. In 
particular, the set $J$ of vertices of degree $d+1$ is $r_1$-sparse.

\paragraph{Constructing the set $K$:}
First, we construct a set $K \subseteq V$ of vertices of degree at most $d$ such that $J\cup K$ is 
$(r+2)$-sparse while $K$ is \emph{$r'$-dense}, meaning that for every $x\in V$ there is $y\in K$ with
$\dist(x,y)\le r'$. (This density requirement will later give us the desired expansion property.) 

Such a set $K$ can be constructed as follows. By Corollary~\ref{cr:label},
take an $(r+2)$-sparse labeling $\ell:V\to[m]$.
Next, iteratively for $i=1,\dots,m$, add to $K$ all vertices $x\in V$
such that $\ell(x)=i$ and $J\cup K\cup\{x\}$ is still $(r+2)$-sparse. By the definition of
$\ell$, no two vertices with the same label can create a conflict to the sparseness. Thus the set
$J\cup K$ remains $(r+2)$-sparse throughout the whole procedure.

Let us verify that the final set $K$ is $r'$-dense. Take any $y\not\in K$. Since we did not add $y$, it is at distance at most $r+2$ from $J\cup K$. Assume that $\dist(y,K)>r+2$ as otherwise we are done. Then
there is $z\in J$ with $\dist(y,z)\le r+2$. By the assumptions of Theorem~\ref{theorem:inductive_step}, all 
connectivity components (in particular the component of $z$) are infinite.
Let $y'$ be a vertex at distance exactly $r+3$ from~$z$. Such a vertex exists:
take any walk that starts at $z$ and eventually goes away from it and let $y'$ 
be the first visited vertex that is at distance at least $r+3$ (and thus 
exactly $r+3$) from $z$. This vertex $y'$ is at distance at least $r_1-(r+3)> 
r+2$ 
from $J\setminus\{z\}$ by the $r_1$-sparseness of $J$. By the construction of $K$, we have 
that $\dist(y',K)\le r+2$. By the triangle inequality,
 \begin{eqnarray*}
 \dist(y,K)&\le& \dist(y,z)+\dist(z,y')+\dist(y',K)\\
  &\le &(r+2)+(r+3)+(r+2)\ =\ r'.
  \end{eqnarray*}
 Thus $K$ is indeed $r'$-dense.

\paragraph{Stars of exceptional vertices:}
For any $x \in K$ let us define the \textit{star} of $x$ to be the set
  \begin{equation}\label{eq:D}
    D(x) := N_{1}(x)\cap \{y\in V\mid \deg(y)=d\}
   \end{equation}
 of vertices of degree $d$ that are at distance at most $1$ from $x$. 
(Note that if $\deg(x) < d$ then $x$ itself is not included into $D(x)$.)

Given a matching $M\subseteq E$, the star of $x\in K$ can be one of  three different types:
\begin{itemize}
\item \emph{Complete:}  if $D(x)\subseteq V(M)$, that is, all vertices of 
$D(x)$ are covered by the matching~$M$ (including the case
$D(x)=\emptyset$).
\item \emph{Heavy:} if $|D(x)\setminus V(M)|=1$, that is, exactly one vertex of $D(x)$ is not covered by~$M$.
\item \emph{Light:} if at least 2 vertices of $D(x)$ are not covered by $M$.
\end{itemize}

We define the \textit{truncated star}  $D'_M(x)$ of $x \in K$ to consist of all but one of the  
uncovered vertices of the star. The excluded vertex is arbitrary: we can take, for example, the one with the largest label in some $2$-sparse labeling of $\GG$ which is fixed throughout the whole proof,
\[ D'_M(x) : = (D(x) \setminus V(M)) \setminus \{\mbox{the largest remaining vertex, if any left}\}. \]

Note that the truncated star is empty if and only if the star is heavy or complete. 
We define the set of  \emph{unhappy}  vertices $U_M$ to contain all unmatched vertices of degree at least $d$ that are not in or adjacent to $K$ together with all vertices in truncated stars:
\begin{equation}\label{def:unhappy} 
 U_M :=   \Big(\,\{x\in V\setminus  N_{1}(K)\mid \deg(x) \geq d\} \setminus V(M)\, \Big)\, \cup\,  {\big(\cup_{x \in K} D'_M(x)\big)}.\end{equation}
 To motivate this definition, note that if we can find a Borel matching that covers $V(M)\cup U_M$ then 
Theorem~\ref{theorem:inductive_step} is proved.

When the current matching $M$ is understood, we may abbreviate $D'_M(x)$ and $U_M$ as respectively $D'(x)$ and~$U$.

\paragraph{Constructing the matching $M$:} 
First, we will construct a sequence of Borel matchings $M_0,M_1, M_2, \ldots\subseteq E$ such that the following properties hold:
 \begin{align}
 J&\ \subseteq\ V(M_i), & \mbox{for all $i\ge 0$,}\label{eq:1}\\ 
 \muN{V(M_i\bigtriangleup M_{i+1})}&\ \le\ (2i+2)\,\muN{U_{M_i}}, & \mbox{for all $i\ge 0$,}\label{eq:2}\\
 \sum_{j=0}^\infty (2j+2)\,\muN{U_{M_j}} &\ <\ \infty.\label{eq:3}
\end{align}

Once we have $M_i$'s as above, we can define
$M:=\cup_{j=0}^\infty \cap_{i=j}^\infty M_i$ to consist of those pairs that belong to all but finitely many matchings $M_i$. Clearly, $M\subseteq E$ is a Borel matching. 
Let us show that $M$ satisfies Theorem~\ref{theorem:inductive_step}. 

Since $V(M_i\bigtriangleup M_{i+1})$ is the set of vertices that experience
some change when we pass from $M_i$ to $M_{i+1}$, the last two
conditions imply by the Borel--Cantelli Lemma that the set $X$ of
vertices where the matchings $M_i$ do not
stabilize has measure zero. By (\ref{eq:1}), we conclude that
$J\setminus V(M)$ is a subset of $X$ and thus has measure zero. Also, the symmetric difference between $U_M$ and $\cup_{j=0}^\infty\cap_{i=j}^\infty U_{M_i}$ is contained within the null-set $N_{2}(X)$. Since $\sum_{i=0}^\infty \muN{U_{M_i}}$ converges by (\ref{eq:3}), each intersection 
$\cap_{i=j}^\infty U_{M_i}$ has measure zero. By the $\sigma$-additivity of $\mu$, we conclude that $U_M$ has measure zero too. 

Hence, if we remove all connectivity components intersecting
$U_M\cup X$, 
then $J\subseteq V(M)$ and all unmatched degree-$d$ vertices come
from stars $D(x)$, $x\in K$, at most one vertex per star.  Since the removed
set has measure zero and $J\cup K$ is $(r+2)$-sparse, 
all conclusions of  Theorem~\ref{theorem:inductive_step} hold.

\paragraph{Augmenting paths:} 
It remains to construct the sequence $(M_i)_{i\in \I N}$ satisfying the above three conditions. Inductively for $i\in\I N$,  we will construct $M_{i+1}$ from $M_i$ by flipping \emph{alternating paths}, that is, paths that start in an unmatched vertex and whose matched and unmatched edges follow in an alternating manner. 
\emph{Flipping} such a path means changing the matching along the path by swapping
the matched edges with the unmatched ones. Usually one would only flip paths that end in an unmatched vertex, thus strictly increasing the set of matched vertices. In our case, however, we may also have to flip paths whose last edge belongs to the matching. While such a flip retains the matching property, it does not increase its size, so extra care needs to be taken.

With all these preparations we can describe what kind of alternating paths we use to improve the 
current matching~$M$.

\begin{definition}\label{def:augmenting} An \textit{augmenting path} is an alternating path that starts in $U_M$ 
(that is, with an unhappy vertex) and   
  \begin{itemize}
  \item either has odd length and ends in any unmatched vertex,
 \item or has even length and ends in a vertex of degree less then $d$ or in a vertex of a complete star.
 \end{itemize} 
\end{definition}

When we have more than one matching involved, we may call a path as above \emph{$M$-augmenting}.

\begin{claim}\label{cl:U} If  $M'$ is obtained from $M$ by flipping an $M$-augmenting path $(p_0,\dots,p_k)$, then
$U_{M'}\subseteq U_M\setminus \{p_0\}$; in particular, the set of unhappy vertices strictly decreases.
\end{claim}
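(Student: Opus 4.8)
The plan is to keep precise track of which vertices change their matched/unmatched status when the path $(p_0,\dots,p_k)$ is flipped, and then to read off $U_{M'}$ from that information and compare it clause by clause with $U_M$. The elementary starting point is the bookkeeping for flipping an alternating path. Every vertex of $U_M$ is unmatched, since both parts of (\ref{def:unhappy}) consist of vertices outside $V(M)$; hence $p_0$ is unmatched, the path starts with a non-matching edge, and after flipping $p_0$ becomes matched. The internal vertices $p_1,\dots,p_{k-1}$ stay matched; in the odd case $p_k$ was unmatched and becomes matched, so $V(M')=V(M)\cup\{p_0,p_k\}$, while in the even case the last edge of the path lies in $M$, so $p_k$ becomes unmatched and $V(M')=(V(M)\cup\{p_0\})\setminus\{p_k\}$. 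Thus $p_0\in V(M')$ always, and the \emph{only} vertex that can pass from matched to unmatched is $p_k$, and only when the path is even.

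Because both parts of (\ref{def:unhappy}) consist of vertices outside the current matching, $p_0\in V(M')$ immediately gives $p_0\notin U_{M'}$; since $p_0\in U_M$ by hypothesis, it then remains to prove $U_{M'}\subseteq U_M$. I would take $v\in U_{M'}$ and split on which part of (\ref{def:unhappy}) it satisfies for $M'$. First, if $v\notin N_1(K)$, $\deg(v)\ge d$ and $v\notin V(M')$, I claim $v\notin V(M)$ as well: otherwise $v\in V(M)\setminus V(M')$, so $v=p_k$ and the path is even, but then $p_k$ is the endpoint of an even augmenting path with $\deg(p_k)\ge d$, which forces $p_k$ to lie in a complete star $D(x)$, hence $p_k\in N_1(x)\subseteq N_1(K)$, contradicting $v\notin N_1(K)$; so $v$ lies in $U_M$ via the same part. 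Second, if $v\in D'_{M'}(x)$ for some $x\in K$, note that a nonempty truncated star means the star of $x$ is light for $M'$, i.e.\ $|D(x)\setminus V(M')|\ge 2$. I would then show $D(x)\setminus V(M')\subseteq D(x)\setminus V(M)$: the only vertex of $D(x)$ that could become unmatched is $p_k$ (even case), and if $p_k\in D(x)$ then $\deg(p_k)=d$, so by the definition of an even augmenting path $p_k$ lies in a complete star, which by the $(r+2)$-sparseness of $J\cup K$ (so that every vertex lies in at most one star) must be $D(x)$ itself; but then $D(x)\subseteq V(M)$, so $D(x)\setminus V(M')\subseteq\{p_k\}$ has size at most $1$, contradicting lightness.

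With $A':=D(x)\setminus V(M')\subseteq A:=D(x)\setminus V(M)$ in hand, the conclusion is one line: writing the truncated stars as $D'_{M'}(x)=A'\setminus\{\max A'\}$ and $D'_M(x)=A\setminus\{\max A\}$ with ``$\max$'' taken in the fixed $2$-sparse labeling, we have $v\in A'\setminus\{\max A'\}$ and $A'\subseteq A$, so $v\in A\setminus\{\max A\}=D'_M(x)\subseteq U_M$ (if $v$ were $\max A$ then, as $\max A\ge\max A'$ and $v\in A'$, we would get $v=\max A'$, a contradiction). This yields $U_{M'}\subseteq U_M\setminus\{p_0\}$. I expect the only real obstacle to be this bookkeeping rather than any deeper point: one has to notice that flipping an \emph{even} augmenting path genuinely removes $p_k$ from the matching and check that this never creates a new unhappy vertex, which is exactly what the ``degree $<d$ or complete star'' restriction on even endpoints, together with the sparseness of $K$, is designed to guarantee; in particular, new uncovered vertices could only enter some $D'_{M'}(x)$, and the monotonicity $A'\subseteq A$ together with the $\max$-observation rules this out. (One should also note that every augmenting path has length $k\ge1$, since a length-$0$ path would require $p_0$ itself to have degree $<d$ or lie in a complete star, impossible for an unmatched vertex of $U_M$.)
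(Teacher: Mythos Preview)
Your proof is correct and follows essentially the same route as the paper's: both arguments observe that the only vertex that can be uncovered by a flip is $p_k$ in the even case, and then use the endpoint condition (degree $<d$ or complete star) to rule out any new unhappy vertex. Your version simply makes explicit the truncated-star bookkeeping (the $A'\subseteq A$ and $\max$ argument, and the use of $(r+2)$-sparseness of $J\cup K$ to identify the star) that the paper compresses into the one line ``the star becomes heavy (but not light).''
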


\begin{proof}
Clearly, $p_0$ belongs to $U_M$ but not to $U_{M'}$. So we only need to check 
that $U_{M'}\subseteq U_{M}$, that is, no new vertex can become unhappy because of a flip. This could only be possible by uncovering a vertex of degree at least $d$. The only way a flip can uncover a vertex is when the path has even length: in this case the endpoint is uncovered. But the endpoint of an even length augmenting path can only have degree at least $d$ if the augmenting path ends in a vertex of a complete star. After the flip, the star becomes heavy (but not light), so the uncovered vertex does not belong to $U_{M'}$. 
\qed\end{proof}

Now we are ready to describe formally the inductive construction of the matchings $M_i$.

\paragraph{Constructing the initial matching $M_0$:}
We start by constructing a matching $M_0$ that covers all vertices in $N_{r_1/4}(J)$ of degree
at least $d$. (This property will be needed later when we apply Theorem~\ref{shortalternating_theorem}
to verify \eqref{eq:3}.) 
 Since the neighborhoods $N_{r_1/4+1}(x)$ for $x\in J$ are disjoint (assuming $r_1\ge 2$), 
we can
choose $M_0$ inside each neighborhood independently, for example,
by taking the lexicographically smallest matching with respect to a fixed 
$r_1$-sparse labeling. This ensures that the obtained matching $M_0$ is Borel.

\hide{
\begin{lemma}\label{d+1covering_lemma} Let $d \geq 2$. Let $G=(V,E)$ be an infinite connected 
graph of maximum degree $d$ except one vertex $x$ of degree $d+1$.
Assume that $G$ is bipartite or has no stumps !!! we defined stumps only for infinite
conn graphs!!!. Then
$G$ has a matching that covers all vertices of degree at least $d$.
\end{lemma}
}

It remains to show that for for each $x\in J$ the finite subgraph $G'$ induced by
$V':=N_{r_1/4+1}(x)$ has a matching covering every vertex of  $Z := \{z \in N_{r_1/4}(x) : \deg(z) \geq d\}$.

First, let us prove the no-stumps case. Tutte's 1-Factor Theorem \cite{tutte:47} implies that it is enough to check that for every
set $S\subseteq V'$ the number of odd components of $G'-S$ that lie entirely 
inside $Z$ is at most $|S|$. (The reduction is as follows: add, if needed, an
isolated vertex to make $|V'|$ even, make all pairs in
$V'\setminus Z$ adjacent and look for a perfect matching in this new graph on~$V'$.) 
Suppose on the contrary that some  $S$
violates the above condition. Each odd
component $C$ 
of $G'-S$ that
lies inside $Z$ sends at least $d$ edges to $S$: this follows from the definition of $Z$ if $|C|=1$ and from the absence of stumps if $|C|\ge 2$. Thus
$|E(S,S^c)|\ge (|S|+1)d$. On the other hand, all vertices of $S$ have
degree at most $d$ except at most one vertex of degree $d+1$. 
Hence the total degree of $S$ is at most $|S|\,d+1$. Since $d\geq 2$, this is strictly less than $(|S|+1)d$, giving a contradiction. 

Now, let us do the case when there are no odd cycles. Split $Z = Z_1\cup Z_2$ into two parts according to the bipartition of the finite graph $G'$.
It is enough to show that there is a matching that covers $Z_1$ and 
one that covers $Z_2$. Indeed, the union of these two matchings consists of paths and even cycles;
moreover one endpoint of each path of even length (whose number of
vertices is odd) has to be outside of $Z$. By deleting such endpoints,
we can assume that $Z$ is covered by cycles and paths, each having an even
number of vertices. However, every such cycle and path admits a perfect 
matching, and the union of these matchings covers $Z = Z_1\cup Z_2$, as desired.

So suppose that there is no matching that covers, say, $Z_1$. By the 
K\H{o}nig-Hall theorem this means that there is a subset $S \subseteq Z_1$ such 
that the set of neighbors $T$ of $S$ has strictly smaller size than $S$. Each 
vertex in $S\subseteq Z$ has degree at least $d$, so the number of edges 
leaving $S$ is at least $d\,|S|$. However, the number of edges arriving in $T$ 
is at most $d\,|T| + 1\le d(|S|-1)+1$. Again, this contradicts
$d\ge 2$.

\hide{
\begin{lemma}\label{d+1covering_lemma} Let $r$ be an integer and $d \geq 2$. Let $G=(V,E)$ be an infinite connected graph of maximum degree
$d+1$. Let $J$ be the set of vertices of degree $d+1$ and let 
 $$
 Z := \{z \in  N_{r}(J) : \deg(z) \geq d\}.
 $$ 
 Assume that  $J$ is
$(2r+2)$-sparse and that $G$ is bipartite or has no stumps. Then
$G$ has a matching that covers all of $Z$.
\end{lemma}
}

\paragraph{Defining $M_{i+1}$ given $M_i$:} For $i\ge 0$, we define $M_{i+1}$ recursively so that $M_{i+1}$ admits no augmenting path of length at most $2i+1$. To get from $M_i$ to $M_{i+1}$ we keep flipping augmenting paths of length at most $2i+1$ as long as there are any of those left. This can be done in a Borel way analogously to how it was done in \cite{elek+lippner:10} as follows. 

First, fix a $(2i+3)$-sparse labeling $\ell:V\to [m]$. Let $\C L$ consist of all ordered sequences of labels of length at most $2i+2$. Take an infinite sequence $(\B v_j)_{j=0}^\infty$ of elements of $\C L$ such that each $\B v\in \C L$ appears infinitely often. Let $M_{i,0}:=M_i$. Then iterate  the following step over $j\in \I N$.
Let $\C P_j$ be the set of $M_{i,j}$-augmenting paths whose labeling is given
by $\B v_j$. It is easy to see that $\C P_j$ is a Borel set that consists of paths such that every two different
paths are at distance at least 3 from each other. Thus we can flip all paths in $\C P_j$ simultaneously,
obtaining a new matching $M_{i,j+1}$. (Note that at most one path can intersect any star $D(x)$; thus every path $p\in \C P_j$ remains
$M_{i,j}$-augmenting even when we flip an arbitrary set of paths in $\C P_j\setminus\{p\}$.) 
Clearly, the obtained matching $M_{i,j+1}$ is Borel. By Claim~\ref{cl:U}, all starting
points of the flipped paths cease to belong to $U_{M_{i,j}}$ while no new vertex can become unhappy.

Having performed the above iteration over all $j\in\I N$, we define
 $$
 M_{i+1}=\cup_{j_0=0}^\infty \cap_{j=j_0}^\infty M_{i,j}
 $$
 to consist of those edges that belong to all but finitely many of the matchings $M_{i,0},M_{i,1},\dots$~.
Clearly, $M_{i+1}$ is a Borel matching.

 Consider an arbitrary edge $e\in E$. Since the number of unhappy vertices within distance $2i$ from $e$ strictly decreases every time when a path containing $e$ is flipped, the matchings $M_{i,0},M_{i,1},\dots$ stabilize on $e$ from some moment.

Suppose that $M_{i+1}$ admits some augmenting path via vertices $p_0,\dots,p_k$ with $k\le 2i+1$. There was a moment $j_0$ after which every edge inside the path and inside $N_{3}(p_0)\cup N_{3}(p_k)$ stabilized. The restriction of the matching $M_{i+1}$ to these edges determines whether the path is $M_{i+1}$-augmenting.
But there are infinitely many values of $j$ when $\B v_j=(\ell(p_0),\dots,\ell(p_k))$
and this path should have been flipped for the first such $j\ge j_0$, a contradiction. Thus
$M_{i+1}$ has no augmenting path of length at most $2i+1$, as desired.

\paragraph{Checking Conditions (\ref{eq:1})--(\ref{eq:3}):} 
 The first condition trivially follows from our construction since
$V(M_0)\supseteq J$ while no flip can unmatch a vertex of degree
$d+1$.

When constructing $M_{i+1}$, we flipped some family ${\cal P}:=\cup_{j=0}^\infty {\cal P}_j$ of augmenting paths that start from $U_{M_i}$.
Let us show that the total measure of vertices involved in these paths is at most $(2i+2)\,\muN{U_{M_i}}$, thus proving~(\ref{eq:2}). 

Take any $j\in\I N$. For $0\le k\le 2i+1$, let $X_{j,k}$ consist
of vertices that are the $k$-th indexed vertex of some path in ${\cal P}_j$, that is,
for every path $(p_0,\dots,p_\ell)\in{\cal P}_j$ with $\ell\ge k$, we include $p_k$ into~$X_{j,k}$. Recall that the paths in ${\cal P}_j$ are vertex-disjoint. Thus, for odd (resp.\ even) $k\le 2i$, the matching
$M_{i,j}$ (resp.\ $M_{i,j+1}$) induces an injective function from $X_{j,k+1}$ into~$X_{j,k}$. Of course, if we
restrict the corresponding matching to those pairs that connect the Borel sets $X_{j,k+1}$ and $X_{j,k}$, then
we obtain a Borel subset of edges. We conclude from Proposition~\ref{pr:subgraphing}
that $\mu(X_{j,k+1})\le \mu(X_{j,k})$. Thus the measure of 
$V(M_{i,j+1}\bigtriangleup M_{i,j})=\cup_{k=0}^{2i+1} X_{j,k}$ is at most 
$(2i+2)\,\mu(X_{j,0})$. 

By Claim~\ref{cl:U}, each vertex of $U_{M_i}$ is the initial vertex of 
at most one path from $\cal P$. Also, the starting point of each path in $\C P_j$ belongs to $U_{M_{i,j}}\setminus U_{M_{i,j+1}}$, which is a subset of $U_{M_i}\setminus U_{M_{i+1}}$, again by Claim~\ref{cl:U}. Thus the sets $X_{j,0}$,
for $j\in\I N$, are pairwise disjoint and all lie in~$U_{M_i}\setminus U_{M_{i+1}}$. We conclude that the measure of      
 $$
 V(M_{i+1}\bigtriangleup M_i)\subseteq \cup_{j=0}^\infty V(M_{i,j+1}\bigtriangleup M_{i,j}) 
 $$
 is, by above, at most $(2i+2)\sum_{j=0}^\infty \mu(X_{j,0})\le 
 (2i+2)\,\mu(U_{M_i})$. This gives~(\ref{eq:2}),
as desired.

By definition,  $M_0$ covers all vertices of degree at least $d$ from $N_{r_1/4}(J)$. In particular, it
follows that $\dist(U_{M_0},J)>r_1/4$. Claim~\ref{cl:U} and the
fact that each edge is flipped finitely many times before we reach $M_i$
imply that $U_{M_0}\supseteq U_{M_{1}}\supseteq \dots \supseteq U_{M_i}$. 
Thus $\dist(U_{M_i},J)>r_1/4$. 
Since $M_i$ admits no augmenting paths of length at most $2i-1$, we can invoke the following
theorem with $n_0=2i-1$ provided that $r_1$ is chosen large enough (namely, if $r_1\ge r_2(d,r')$, the value returned by Theorem~\ref{shortalternating_theorem} on input $(d,r')$).

\begin{theorem}\label{shortalternating_theorem} 
For any $d \geq 2$ and $r'$ there are constants $c =
c(d,r')>0$ and $r_2 = r_2(d,r')$ such that the following holds. Let $\GG=(V,\C B,E,\mu)$  be a graphing such that $\Delta(\GG)\le d+1$, all components are infinite and the set $J$ of vertices of degree $d+1$ is $r_2$-sparse. 
Let $K\subseteq V\setminus N_1(J)$ be a Borel  
set of vertices that is $r'$-dense in $\GG$. Let $M\subseteq E$ be a Borel matching that admits no augmenting path
of length at most $n_0$. Let $U_M$ be  the set of unhappy vertices with respect to $M$ and $K$ as defined in \eqref{def:unhappy}.
If  $\dist(U_M,J)>r_2/4$ and $\GG$ has no odd cycles or has no stumps, then $\muN{U_M}\le (1+c)^{-n_0}/c$.
\end{theorem}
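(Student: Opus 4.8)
The plan is to set up a discharging/expansion argument showing that $U_M$ cannot be too large relative to its neighborhoods, and then to iterate the expansion $n_0$ times. First I would establish the key \emph{local expansion} statement: because $K$ is $r'$-dense and $J\cup K$ is suitably sparse, for any Borel set $X\subseteq V$ with $\mu(X)$ not already bounded by $1/c$ one has $\mu(N_1(X)\setminus X)\ge c\,\mu(X)$, or a variant thereof phrased in terms of the measure $\muC$ of edges leaving $X$. The mechanism: since every vertex is within $r'$ of some vertex of $K$, and around each $x\in K$ at most one vertex of $D(x)$ is allowed to be ``deficient'' (the truncated-star bookkeeping in \eqref{eq:D}, \eqref{def:unhappy}), a set that fails to expand would have to contain almost a whole ``ball structure'' around many $K$-vertices, and the measure-preservation identity \eqref{eq:MeasPres} applied to $X$ and $X^c$ forces a contradiction unless $\mu(X)$ is tiny. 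This is the arithmetic analogue of the finite edge-cut / Hall-type counting already used in the construction of $M_0$ and in \cite{lyons+nazarov:11,csoka+lippner}; the hypotheses ``no odd cycles or no stumps'' and $\dist(U_M,J)>r_2/4$ are exactly what make the local counting go through (stumps are the obstruction to Tutte-type covering, and keeping $U_M$ far from $J$ lets us pretend the relevant neighborhoods are $d$-regular).

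Next I would bootstrap from ``$M$ has no short augmenting path'' to a genuine expansion of $U_M$ itself. The idea is the standard alternating-path/layered-BFS argument: define $A_0:=U_M$ and, for $t=1,\dots,n_0$, let $A_t$ be the set of vertices reachable from $U_M$ by an alternating path of length exactly $t$ (tracking parity of the last edge). Because there is no augmenting path of length $\le n_0$, none of these layers can ``escape'' into an unmatched vertex of the wrong type, so the matching $M$ together with the graphing edges sets up, at each step, an injective measure-non-increasing map in one direction and the local expansion above in the other direction. Combining them yields $\mu(A_{t+1}\setminus(A_0\cup\dots\cup A_t))\ge c\,\mu(A_0\cup\dots\cup A_t)$, hence $\mu(A_0\cup\dots\cup A_{n_0})\ge (1+c)^{n_0}\mu(U_M)$. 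Since the left-hand side is at most $\mu(V)=1$, we get $\mu(U_M)\le (1+c)^{-n_0}$; absorbing the constant loss from the exceptional vertices of degree $d+1$ (there are $O(1)$-many per ball, contributing an additive slack handled by the $r_2$-sparseness of $J$ and the $\dist(U_M,J)>r_2/4$ condition) gives the stated bound $(1+c)^{-n_0}/c$.

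I would organize the write-up so that the $c$ and $r_2$ are chosen at the start ($c$ from the expansion constant, $r_2$ large enough that all the ``$O(1)$ per ball'' error terms are dominated, and $r_2/4$ exceeding the radius of whatever neighborhood the local counting inspects), then prove the local expansion lemma, then run the $n_0$-fold iteration. The two cases (no odd cycles / no stumps) differ only inside the local expansion lemma: in the bipartite-type case one uses a K\H{o}nig--Hall bipartite count exactly as in the $M_0$ construction, and in the no-stumps case one uses the Tutte-type count, in both cases transferred from the finite setting to the graphing via \eqref{eq:MeasPres} and Proposition~\ref{pr:subgraphing}.

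The main obstacle I anticipate is making the local expansion lemma genuinely quantitative and uniform: one must show the expansion constant $c$ depends only on $d$ and $r'$ (not on the particular graphing or matching), and that the error coming from truncated stars — one permitted uncovered vertex per $x\in K$ — really is a lower-order term rather than something that can accumulate and kill the expansion. Handling this requires carefully amortizing the ``one deficiency per $K$-ball'' against the $\Omega(1)$ surplus that $r'$-density guarantees, and checking that the alternating-layer maps remain injective globally (not just locally) — which is where the vertex-disjointness of the parallel-flipped paths, the $2$-sparse labeling used to define truncated stars, and Claim~\ref{cl:U} all get reused. Everything else is bookkeeping with \eqref{eq:MeasPres}.
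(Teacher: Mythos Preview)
Your outline is roughly right for the no-odd-cycles case but has a genuine gap in the general (no-stumps) case.

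For the bipartite-type case the paper does essentially what you sketch: it defines $X_n$ as the set reachable by alternating paths of length $\le 2n$ from $U$, splits $X_n=U\cup\tilde H_n\cup\tilde T_n$ by parity of the witnessing path, and shows (i) the complement $O_n$ is $(r'+1)$-dense (using the truncated-star bookkeeping), so the measure of edges leaving $X_n$ is $\ge d^{-r'}\mu(X_n)$; (ii) $M$ gives a bijection $\tilde H_n\leftrightarrow\tilde T_n$, and a short count shows at least half of the boundary edges originate in $\tilde T_n\cup U$, hence land in $X_{n+1}\setminus X_n$. This yields $\mu(X_{n+1})\ge(1+c)\mu(X_n)$. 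Your step ``injective map one way, expansion the other'' captures this, though you should be explicit that you need bipartiteness to know $\tilde H_n\cap\tilde T_n=\emptyset$ and that $\tilde T_n\cup U$ is independent; otherwise the edge count fails.

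The real problem is your claim that the two cases ``differ only inside the local expansion lemma.'' They do not. The expansion lemma (edges leaving a set disjoint from a dense set) is identical in both cases and uses neither hypothesis. The case distinction enters in the structure of the alternating BFS: when odd cycles are allowed, $\tilde H_n$ and $\tilde T_n$ may overlap (call the intersection $B_n$), and there may be edges from $T_n\cup U$ into $B_n$ or within $T_n\cup U$. Such edges contribute to the boundary count of $X_n$ but their other endpoint can already lie in $X_n$, so they do \emph{not} force $\mu(X_{n+1})>\mu(X_n)$. Your inequality $\mu(A_{t+1}\setminus\bigcup_{s\le t}A_s)\ge c\,\mu(\bigcup_{s\le t}A_s)$ simply fails here. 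The paper repairs this with substantial extra machinery: it introduces \emph{stubborn} vertices (those in $T_n\cup U$ adjacent to $B_n$ but not entering $\tilde H_{n+1}$), associates to each a \emph{family} inside $B_n$, proves families are pairwise disjoint and grow when stubborn vertices persist, and replaces $\mu(X_n)$ by an invariant $I(n)=\mu(X_n)+\mu(B_n)+\frac12\int e_n$ that can be shown to grow geometrically. The ``no stumps'' hypothesis is used precisely here, to bound the number of edges from a stubborn vertex into its own family (equation of the form $|E(F(x),x)|\le |E(F(x),H\cup N\cup O)|+\mathbbm 1_J(x)$), not in any Tutte-type local expansion lemma. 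Without this entire layer of argument your proof does not go through in the non-bipartite case.
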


The theorem gives that $\muN{U_{M_i}}\le (1+c)^{1-2i}/c$, where $c=c(d,r')>0$ does not depend on~$i$.
This means that $\muN{U_{M_i}}$ decreases exponentially fast with $i$, implying~(\ref{eq:3}).
 
Thus we have proved Theorem~\ref{theorem:inductive_step} (by reducing it to Theorem~\ref{shortalternating_theorem}).\qed\medskip

\section{Short alternating paths via expansion}\label{section:shortaugmenting}

In this section we prove Theorem~\ref{shortalternating_theorem}, which will complete the proof of our main result. Roughly speaking, Theorem~\ref{shortalternating_theorem} says that if $M$ is a matching in $\GG$ such that there are no augmenting paths of length at most $n_0$ in the sense of Definition~\ref{def:augmenting}, then the set $U_M$ of unhappy vertices is exponentially small in~$n_0$.
The proof method is adapted from~\cite{csoka+lippner} but is also considerably simplified since 
we have a dense set around which unmatched vertices are allowed. Essentially all proof ideas that went
into Theorem~\ref{shortalternating_theorem} are already contained in~\cite{csoka+lippner}. Nevertheless we include here all the details to keep this paper self-contained. The main idea is that if 
the special set $K$ is dense then subsets of $K^c$ expand by Lemma~\ref{lemma:expansion}; thus the set of vertices that can be reached by alternating paths
of length at most $i$ grows exponentially with $i$. This is fairly straightforward to show in the case when there are no odd cycles. This proof is presented first (in Section~\ref{NoOddCycles}),  to make it easier for the reader to understand the ideas that are common to both cases. The general case, however, needs some further, rather involved arguments. This is due to the fact that an alternating walk from $x$ to $y$ cannot always be trimmed to an alternating path from $x$ to $y$ when odd cycles are allowed.

\subsection{Alternating breadth-first search}\label{alternating}

Given $d$ and $r'$, define $c_0:=d^{-r'}$. Let $r_2 = r_2(d,r')$ be sufficiently large and then take small $c>0$.
Let $\GG$, $K$ and $M$ be as in Theorem~\ref{shortalternating_theorem}. By assuming that e.g.\ $(1+c)^{-4}/c\ge 1$,
it enough to establish the claim for $n_0\ge 5$ only.

We consider alternating paths starting from~$U=U_M$.
Let $X_n$ denote the set of vertices that are accessible from $U$ via an alternating path of length at most~$2n$. Our first goal is to show that subsets of $X_n$ have large boundary if $2n\le n_0$.

For $n\ge 1$, let $\tilde{H}_n$ denote the vertices that are endpoints of odd alternating paths of length at most $2n-1$ and $\tilde{T}_n$ those that are endpoints of even alternating paths of length at least 2 and at most $2n$. Then $X_n = U\cup \tilde{H}_n \cup \tilde{T}_n$. Finally, define $O_n := V \setminus X_n$.

\begin{propo}\label{prop:prop1} If $1\le n \le n_0/2$, then the following claims hold.
\begin{enumerate}
\item The points in $\tilde{H}_n$ are covered by $M$.
\item The edges of the matching $M$ give a bijection between $\tilde{H}_n$ and $\tilde{T}_n$. 
(In particular, $\muN{\tilde{H}_n} = \muN{\tilde{T}_n}$ and $\tilde{H}_n\cup \tilde{T}_n \subseteq V(M)$ is disjoint from 
$U$.) 
\item $\tilde{T}_n$ cannot contain vertices of degree less than $d$.
\item $K \cap U = \emptyset$.
 \item Every $x\in K$ with $D(x)\subseteq X_n\setminus U$ belongs to $O_n$.
\end{enumerate}
\end{propo}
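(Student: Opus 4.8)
The plan is to verify the five claims in order, since each relies on the definition of augmenting path (Definition~\ref{def:augmenting}) and the hypothesis that $M$ admits no augmenting path of length at most $n_0\ge 2n$. For claim~1, suppose $v\in\tilde H_n$ is reached by an odd alternating path $p$ of length $\len p\le 2n-1\le n_0$ starting in $U$. If $v$ were unmatched, then $p$ would be an odd augmenting path (odd length, ending in an unmatched vertex), contradicting the assumption on $M$; hence $v\in V(M)$.

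For claim~2, let $v\in\tilde H_n$ be reached by an odd alternating path $p$ of length $\le 2n-1$. By claim~1 the last edge of $p$ is unmatched, so $v$ is matched via some edge $vv'\notin p$ (if $vv'$ were the last edge of $p$ it would be matched, contradicting oddness of the alternating pattern ending in an unmatched edge). Appending $vv'$ to $p$ yields an even alternating path of length $\le 2n$ ending in $v'$, so $v'\in\tilde T_n$; conversely every vertex of $\tilde T_n$ is the $M$-partner of some vertex reached by an odd alternating path. The matching edge itself gives the claimed bijection, and since $M$ is measure-preserving (Proposition~\ref{pr:subgraphing} applied to the Borel subset of $M$ joining $\tilde H_n$ and $\tilde T_n$, in both directions) we get $\muN{\tilde H_n}=\muN{\tilde T_n}$. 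The disjointness from $U$ follows because vertices in $U$ of degree $\ge d$ are unmatched while the ones in truncated stars are handled in claim~4, and in any case $\tilde H_n\cup\tilde T_n\subseteq V(M)$ whereas an augmenting path cannot revisit its unmatched starting vertex — I would spell out that if some $v\in\tilde H_n\cup\tilde T_n$ lay in $U$, truncating the alternating path at $v$ produces a shorter augmenting path (odd case directly; even case requires $v$ to have degree $<d$ or be in a complete star, contradicting claims~3 and~5).

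For claim~3, suppose $v\in\tilde T_n$ has $\deg(v)<d$. Then the even alternating path of length $2\le\len p\le 2n\le n_0$ ending in $v$ is, by the second bullet of Definition~\ref{def:augmenting}, an augmenting path, a contradiction. For claim~4, if $x\in K\cap U$ then by definition~\eqref{def:unhappy} either $x\in D'_M(y)$ for some $y\in K$, which is impossible since the truncated star consists of vertices of degree $d$ and $J\cup K$ is $(r+2)$-sparse so $K$ contains no neighbours of $K$ other than via stars — more directly, $x\in K$ has degree at most $d$ and lies in $N_1(K)$, so it cannot be in the first set of \eqref{def:unhappy}; and truncated-star vertices of a $K$-vertex $y$ satisfy $\dist(x,y)\le 1$, forcing $x=y$ by sparseness of $J\cup K$, but $x\notin D(x)$ when considering its own star unless... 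I would instead argue cleanly: every vertex of $U_M$ either lies outside $N_1(K)$ or lies in some truncated star $D'_M(y)$; a point of $K$ is in $N_1(K)$ so not of the first kind, and if it were in $D'_M(y)$ then $x\in N_1(y)$ with $x\ne y$ (since $x\notin D(x)$ would be needed) contradicting that $J\cup K$ is $(r+2)$-sparse with $r+2\ge 1$. So $K\cap U=\emptyset$.

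For claim~5, suppose $x\in K$ with $D(x)\subseteq X_n$ but $D(x)\cap U=\emptyset$. Since $M$ admits no augmenting path of length $\le n_0$, I claim the star $D(x)$ is complete (all its vertices matched): if some $v\in D(x)$ were unmatched, then, as $v\in X_n$ is reachable by an alternating path of length $\le 2n$, consider such a path $p$; if $p$ has odd length it ends in a matched vertex by claim~1, so $v$ being its endpoint is impossible unless $\len p$ is even, but then the path already ends in $v$ and — here is where I'd use that an even path ending at an unmatched vertex is augmenting only if the endpoint has degree $<d$ or lies in a complete star. The key point: if $x$'s star has $\ge 1$ unmatched vertex then it has $\ge 1$ unmatched vertex in its truncated star unless it is exactly heavy, and a heavy star's unique unmatched vertex... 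The cleanest route is: if $D(x)\cap U=\emptyset$ then by the definition of $D'_M(x)$ and $U$, the star $D(x)$ is either complete or heavy. If it is heavy with unmatched vertex $v\in D(x)\subseteq X_n$, take a shortest alternating path $p$ from $U$ to $v$; it has length $\le 2n\le n_0$; since $v$ is unmatched, $p$ ends in an unmatched edge, so $\len p$ is odd, so $p$ is an odd augmenting path (ends in unmatched vertex $v$), contradicting the hypothesis on $M$. Hence $D(x)$ is complete; but then no augmenting path may end at $x$'s star, which is consistent — so in fact we must have $D(x)\cap U\neq\emptyset$ after all, completing the contrapositive. I expect claim~5 to be the main obstacle, precisely because disentangling the three star types (complete/heavy/light) from reachability in $X_n$ requires care about whether the witnessing alternating path has odd or even length and about which endpoints the definition of augmenting path permits; the rest is routine case analysis on path parity.
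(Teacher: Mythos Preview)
Your arguments for Parts 1--3 are correct and match the paper's. (For the disjointness in Part 2, note simply that every vertex of $U$ is unmatched by definition while $\tilde H_n \cup \tilde T_n \subseteq V(M)$; no further case analysis is needed.)

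For Part 4 your purely definitional approach has a gap: you assert that $x \in D'_M(y)$ forces $x \neq y$ ``since $x\notin D(x)$ would be needed'', but this is false when $\deg(x) = d$, in which case $x \in D(x)$ and nothing you have said rules out $x \in D'_M(x)$. The paper's argument is different and avoids this: assuming $x \in K \cap U$, any unmatched neighbour of $x$ would yield an augmenting path of length $1$, so all neighbours of $x$ are matched; hence $D(x)\setminus V(M)\subseteq\{x\}$, the star of $x$ is heavy or complete, and $D'_M(x)=\emptyset$. Only then does sparseness of $J\cup K$ dispose of the truncated stars $D'_M(y)$ with $y\neq x$.

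The real problem is Part 5. You correctly reach the conclusion that the star $D(x)$ is complete (immediate from Part~2: $D(x)\subseteq X_n\setminus U\subseteq V(M)$). But you then write ``no augmenting path may end at $x$'s star, which is consistent'' and declare the contrapositive finished --- this is precisely backwards. By Definition~\ref{def:augmenting}, a vertex of a complete star is exactly where an even augmenting path \emph{is permitted} to end, and that is the lever the paper pulls. Since $x\in D(x)\subseteq X_n$ and $x\notin U$ by Part~4, we have $x\in\tilde H_n\cup\tilde T_n$. If $x\in\tilde T_n$, the witnessing even alternating path ends in a vertex of a complete star and is therefore augmenting. If $x\in\tilde H_n$, deleting the last edge of a witnessing odd alternating path gives an even alternating path to a neighbour $y$ of $x$; then either $\deg(y)<d$ or $y\in D(x)$, so again the path is augmenting. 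Your sketch never manufactures an augmenting path out of the completeness of the star, and that manufacturing step is the entire content of the claim.
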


\begin{proof}
Part 1 is clear: if a vertex in $\tilde{H}_n$ would not be matched then it would give rise to an augmenting path of length at most $2n-1$. 

Part~2 follows from Part~1 and the definition of an alternating path. (Note that $\muN{\tilde{H}_n} = \muN{\tilde{T}_n}$ because $(V,\C B,M,\mu)$ 
is a graphing by Proposition~\ref{pr:subgraphing}.) 

Part~3 is immediate from the definition of an augmenting path because a vertex of small degree in $\tilde{T}_n$ 
gives an augmenting path of length at most $2n$.

To see Part~4, assume that there is $x \in K \cap U$.  Then all neighbors of $x$ are matched, for otherwise we get an augmenting path of length 1.  Thus $D(x)$ is a complete or heavy star. But then $x$ cannot belong to $U$ by definition.

Let us prove Part~5. The assumption $D(x)\subseteq X_n\setminus U$ implies by 
Part~2 that the star of $x$ is complete. 
If $x\in \tilde{T}_n$, then there is an even alternating path  of length at most $2n$ ending in $x$. But then this path is also 
augmenting, a contradiction. Suppose next that $x\in \tilde{H}_n$. Let $y$ be 
the last vertex before $x$ on an alternating 
path $p$ of length $2m-1\le 2n-1$ from $U$ to $x$. We have that $m\ge 2$ for
otherwise $y\in U$ has degree $d$  and so $D(x)\cap 
U\ni y$ is
non-empty, contrary to our assumption. Part~3 implies that $y\in \tilde{T}_{m-1}$ has degree at least $d$. In fact, $y\in N_1(x)$ has degree exactly $d$,
since we assumed that $J\cap N_1(K)=\emptyset$.
It follows that $y$ belongs to $D(x)$ and the path $p' := p \setminus x$ is  augmenting because $D(x)$ is complete, a contradiction which shows that $x\not\in \tilde{H}_n$. Also,
the vertex $x\in K$ cannot belong to $U$ by Part~4. We conclude that $x$ lies in $(\tilde{T}_n\cup \tilde{H}_n\cup U)^c=O_n$, as required.
\qed\end{proof}

\begin{lemma}\label{lemma:denseset} If $1\le n\le n_0/2$, then $O_n$  is $(r'+1)$-dense. 
\end{lemma}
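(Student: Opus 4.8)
\textbf{Proof plan for Lemma~\ref{lemma:denseset}.}

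The plan is to leverage the fact that $K$ itself is $r'$-dense (this is one of the hypotheses of Theorem~\ref{shortalternating_theorem}), and to show that every vertex of $K$ lies within distance $1$ of $X_n$; combined, these two facts will give that $X_n$ is $(r'+1)$-dense, i.e.\ $O_n = V\setminus X_n$ is $(r'+1)$-dense in the sense that every vertex of $V$ is within distance $r'+1$ of $X_n$. Wait --- I need to be careful about what ``$O_n$ is $(r'+1)$-dense'' means: per the notation section, a set $S$ is $r$-dense if $N_r(S)=V$. So the claim to prove is $N_{r'+1}(O_n)=V$. The natural route is: show $N_{r'+1}(X_n)=V$ (using $r'$-density of $K$ plus a one-step argument that each $x\in K$ is within distance $1$ of $X_n$), and then observe that $X_n\subseteq N_1(O_n)$ because... hmm, actually the cleaner statement is that $X_n$ being $(r'+1)$-dense is what we really want downstream, and the lemma is phrased in terms of $O_n$; I will prove $X_n$ is $(r'+1)$-dense and note that in the intended application any vertex adjacent to $X_n$ but not in $X_n$ lies in $O_n$, so $O_n$ inherits $(r'+1+1)$-density --- but since the lemma as stated says $(r'+1)$, I will instead directly establish that \emph{$X_n$} is $r'$-dense-ish. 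Let me restructure: the key claim is that for every $x\in K$, we have $\dist(x,X_n)\le 1$, equivalently $N_1(x)\cap X_n\neq\emptyset$ (or $x\in X_n$). Granting this, for an arbitrary $v\in V$, by $r'$-density of $K$ there is $x\in K$ with $\dist(v,x)\le r'$, and then $\dist(v,X_n)\le r'+1$; so $X_n$ is $(r'+1)$-dense, hence so is any superset, and in particular $N_1(X_n)\supseteq$ a set witnessing $(r'+2)$-density of $O_n$. To match the stated bound $r'+1$, I would argue that $K\subseteq N_1(O_n)$ directly: if $x\in K$ with $D(x)\not\subseteq X_n$ then some neighbor of $x$ lies in $O_n$ so $\dist(x,O_n)\le 1$; and the remaining case $D(x)\subseteq X_n$ is where the real work is.

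The heart of the argument is therefore: if $x\in K$ and $D(x)\subseteq X_n$, derive a contradiction or show $x$ (or a neighbor) still sees $O_n$. By Proposition~\ref{prop:prop1}(5), $D(x)\subseteq X_n$ forces $D(x)\cap U\neq\emptyset$, so pick $u\in D(x)\cap U$; this $u$ is an unmatched vertex of degree $d$ with $u\in N_1(x)$, $x\in K$. But then $u\in N_1(K)$, and I should check whether this contradicts $u\in U_M$: looking at the definition~\eqref{def:unhappy}, the first part of $U_M$ excludes vertices in $N_1(K)$, but the second part $\cup_{x\in K}D'_M(x)$ consists of vertices in truncated stars, which \emph{are} in $N_1(K)$ --- so $u$ must lie in some truncated star $D'_M(x')$. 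If $u\in D'_M(x)$ for this same $x$, then by definition of truncated star there is \emph{another} uncovered vertex $u'\in D(x)\setminus V(M)$ (the ``largest remaining'' one that was excluded), so $|D(x)\setminus V(M)|\ge 2$, meaning the star of $x$ is light. Now since $D(x)\subseteq X_n$ and the star is light, I should be able to find an augmenting path: $u$ is reachable by an even alternating path of length $\le 2n$ (since $u\in D'(x)\subseteq X_n$, and if $u\in U$ the trivial length-$0$ path works), and... actually here the cleaner contradiction: an unhappy vertex $u$ that is itself in $U$ with $u\in X_n$ and whose star $x$ is light --- hmm, $u\in U$ always (it's in $U_M$), so $u$ itself is the start of a length-$0$ augmenting path; but that's vacuous. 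Let me reconsider: the point must be that we want to \emph{cover} $u$, and if $x$ has a light star with $D(x)\subseteq X_n$, then another uncovered vertex $w$ of $D(x)$ is also in $X_n$, reachable by some alternating path; concatenating with the edges $w$--$x$--$u$ gives an alternating walk from $U$ to $u$ that we can flip. The genuine obstacle is turning such a walk into a genuine \emph{augmenting path} of controlled length --- exactly the ``an alternating $xy$-walk cannot always be trimmed to an alternating $xy$-path when odd cycles are allowed'' issue flagged in the section intro.

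So the main obstacle I anticipate is the path-vs-walk subtlety in the general (odd-cycles-allowed) case: I would handle the no-odd-cycles case cleanly by a direct trimming argument, and for the general case I expect to need a more careful analysis --- possibly invoking that $D(x)$ together with $x$ spans only a few vertices all near $K$, and that the relevant alternating structure around $x$ can be rerouted, or alternatively arguing that the existence of the forbidden configuration $D(x)\subseteq X_n$ with a light star is impossible because it would already have been eliminated when $X_n$ was built up. Concretely, I would: (i) fix $v\in V$ arbitrary; (ii) get $x\in K$ with $\dist(v,x)\le r'$; (iii) if $D(x)\not\subseteq X_n$, then $\dist(x,O_n)\le 1$ via a neighbor, done; (iv) if $D(x)\subseteq X_n$, invoke Proposition~\ref{prop:prop1}(5) to get $u\in D(x)\cap U$, then show $x\in X_n$ itself (since $u\in X_n$ is reachable and $x$ is one alternating step --- a matched or unmatched edge --- away, using that the edge $ux$ can be taken as the appropriate parity since $u$ is unmatched so $ux$ is unmatched, extending an even path to $u$ by... no, $u$ unmatched means any path reaching $u$ ends with an unmatched edge, so it has odd length, so $u\in\tilde H_n$ by Prop~\ref{prop:prop1}(1) --- but Prop~\ref{prop:prop1}(1) says $\tilde H_n$ vertices are \emph{matched}, contradicting $u$ unmatched, unless $u\in U$ with the length-$0$ path); so $u\in U$, reached trivially, and the edge $ux$ is unmatched, giving $x$ as endpoint of an odd alternating path of length $1$, hence $x\in\tilde H_n\subseteq X_n$ provided $n\ge 1$); (v) conclude $\dist(v,X_n)\le r'$, so certainly $\dist(v,O_n)\le r'+1$ unless $v\in X_n$ with all of $N_{r'}(v)$... --- at which point I'd patch the boundary case by noting $X_n\subsetneq V$ (as $O_n\ne\emptyset$, which holds because e.g.\ $U\ne V$ when $\mu(U_M)<1$, or trivially by a counting/measure argument) and connectivity of the infinite components lets us walk from any $X_n$-vertex to $O_n$ in finitely many steps --- though bounding that by $r'+1$ needs the $r'$-density of $K$ applied at a vertex just outside $X_n$. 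I expect step (iv)/(v) and this last boundary patch to be where the care is needed; the no-odd-cycle version should be short.
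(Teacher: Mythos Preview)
Your proposal has a genuine directional error that makes step~(iv) unusable. You correctly identify the target as showing $K\subseteq N_1(O_n)$, and your case~(iii) (when $D(x)\not\subseteq X_n$) is fine. But in case~(iv) you then argue that $x\in X_n$ --- which is the \emph{opposite} of what you need. Knowing $x\in X_n$ (or $\dist(x,X_n)\le 1$) tells you nothing about $\dist(x,O_n)$; in particular, density of $X_n$ does not imply density of its complement, and your ``boundary patch'' at the end cannot repair this: you would need to walk from $v$ out to $O_n$ with a bound of $r'+1$, and nothing in your argument provides that bound.

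The paper's argument is much simpler than you anticipate, and involves no alternating-walk trimming or odd-cycle subtleties. For each $x\in K$ it exhibits a concrete vertex $y_x\in D(x)\cap O_n$, by casing on the star type. If the star is heavy or light, take $y_x$ to be the single unmatched vertex of $D(x)$ that is \emph{excluded} from the truncated star $D'_M(x)$; this $y_x$ is unmatched and, since $y_x\in N_1(K)$ and $y_x\notin D'_M(x')$ for any $x'$ (stars of distinct $K$-vertices are disjoint by sparseness of $K$), we have $y_x\notin U$. Now the key point you missed: by Proposition~\ref{prop:prop1}(2), $\tilde H_n\cup\tilde T_n\subseteq V(M)$, so any unmatched vertex outside $U$ automatically lies in $O_n$. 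If the star is complete, then $D(x)\subseteq V(M)$, hence $D(x)\cap U=\emptyset$ (all of $U$ is unmatched), and the \emph{contrapositive} of Proposition~\ref{prop:prop1}(5) gives $D(x)\not\subseteq X_n$, which is exactly your case~(iii). Either way $\dist(x,O_n)\le 1$, and $r'$-density of $K$ finishes.
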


\begin{proof} Since $K$ is $r'$-dense by the assumption
of Theorem~\ref{shortalternating_theorem}, it is enough to show that
$O_n\cap N_1(x)\not=\emptyset$ for every $x\in K$. 

Take any $x\in K$. Assume that $D(x)$ is not a subset of $X_n\setminus U$ for
otherwise the vertex $x$ itself belongs to 
$O_n$ by Proposition~\ref{prop:prop1}.5. If $D(x)$ intersects
$O_n$ then we are done, so we can additionally assume that $D(x)\cap 
U\not=\emptyset$.
This means that the star of $x$ is light. Let $y$ be the unique element of 
$D(x) \setminus D'(x)$. By the definition of $U$, we have that $y \not \in U$. 
Since $y$ is unmatched, it cannot belong to $\tilde{T}_n\cup \tilde{H}_n$ by 
Proposition~\ref{prop:prop1}.2.
We conclude that $N_1(x)\cap O_n\ni y$ is non-empty, as required.
\qed\end{proof}

Thus every subset of $X_n$ has large boundary by the following result.

\begin{lemma}\label{lemma:expansion} If $Q\subseteq V$ is $(r+1)$-dense in a graphing $\GG=(V,{\cal B},E,\mu)$ of degree bound $d+1$, then the measure of edges leaving any Borel subset $W \subseteq Q^c$ is at least $d^{-r} \muN{W}$.
\end{lemma}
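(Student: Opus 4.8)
The plan is to bound $\mu(W)$ by partitioning $W$ into \emph{shells} according to distance from $W^{c}$ and showing that each shell is at most $d$ times larger than the previous one. For $i\ge 1$ set
$$A_{i}:=\{x\in W:\dist(x,W^{c})=i\}=\bigl(N_{i}(W^{c})\setminus N_{i-1}(W^{c})\bigr)\cap W ,$$
which is Borel by Lemma~\ref{lm:BorelChar}. Since $Q\subseteq W^{c}$ and $Q$ is $(r+1)$-dense, every $x\in W$ has $\dist(x,W^{c})\le\dist(x,Q)\le r+1$, so $W=A_{1}\cup A_{2}\cup\dots\cup A_{r+1}$ is a finite disjoint union. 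Write $b:=\mu^{\#}(E(W,W^{c}))$ for the measure of the edge boundary. I will establish $\mu(A_{1})\le b$ and $\mu(A_{i+1})\le d\,\mu(A_{i})$ for every $i\ge 1$; feeding these into a geometric sum over the at most $r+1$ shells then yields $\mu(W)\le d^{r}\,b$, which is the assertion.

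The first estimate is immediate: each vertex of $A_{1}$ has at least one neighbour in $W^{c}$, so $\mu(A_{1})\le\int_{A_{1}}\deg_{W^{c}}(x)\dd\mu(x)=\mu^{\#}(A_{1}\times W^{c})\le\mu^{\#}(W\times W^{c})=b$, using that $\mu^{\#}$ is concentrated on $E$. For the shell-to-shell estimate, fix $i\ge 1$. Every $x\in A_{i+1}$ has a neighbour in $A_{i}$ (the next vertex on a shortest path toward $W^{c}$), so by a Borel selection — e.g.\ take the neighbour of largest label in a fixed sparse labelling provided by Corollary~\ref{cr:label} — there is a Borel map $g\colon A_{i+1}\to A_{i}$ with $g(x)\in N_{1}(x)$. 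Its graph $R\subseteq E$ is a Borel symmetric (after symmetrisation) subset, so $(V,\C B,R,\mu)$ is a graphing by Proposition~\ref{pr:subgraphing}, and \eqref{eq:MeasPres} applied with $A=A_{i+1}$, $B=A_{i}$ gives $\mu(A_{i+1})=\int_{A_{i+1}}\deg^{R}_{A_{i}}\dd\mu=\int_{A_{i}}\deg^{R}_{A_{i+1}}\dd\mu$. For $y\in A_{i}$ the number of $g$-preimages $\deg^{R}_{A_{i+1}}(y)$ is at most the number of neighbours of $y$ lying in $A_{i+1}$; but $y$ already has a neighbour at distance $i-1$ from $W^{c}$ (in $A_{i-1}$ if $i\ge 2$, in $W^{c}$ if $i=1$), so at most $d$ of its at most $d+1$ neighbours can lie in $A_{i+1}$. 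Hence $\mu(A_{i+1})\le d\,\mu(A_{i})$, and therefore $\mu(A_{i})\le d^{\,i-1}\mu(A_{1})\le d^{\,i-1}b$ for all $i$.

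Summing over the shells, $\mu(W)=\sum_{i=1}^{r+1}\mu(A_{i})\le b\,(1+d+\dots+d^{\,r})$, and since $d\ge 2$ this geometric sum is dominated by a constant multiple of $d^{r}$, giving $\mu^{\#}(E(W,W^{c}))=b\ge d^{-r}\mu(W)$ as required. The step needing the most care is the inequality $\mu(A_{i+1})\le d\,\mu(A_{i})$: in the graphing setting one cannot simply count edges, so the inward-pointing edges must be realised as a genuine Borel sub-relation and the bound pushed through the measure-preservation identity \eqref{eq:MeasPres}; it is crucial here that \emph{every} shell vertex has an edge pointing strictly inward, which is exactly what upgrades the trivial bound $d+1$ to $d$ and lets the geometric series close up with the right constant. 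A secondary point is verifying that the $A_{i}$ are Borel and that the shell decomposition terminates, both of which follow from Lemma~\ref{lm:BorelChar} together with the density of $Q$.
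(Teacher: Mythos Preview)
Your shell-decomposition argument is correct and is a genuinely different route from the paper's. The paper assigns to each $w\in W$ a short path to $Q$, picks the first boundary edge on it, and bounds how many starting points can share a given edge; it then pushes this through the Mass Transport Principle (or, in the direct version, through an explicit family of measure-preserving partial maps $\pi_f$). Your approach avoids all the path bookkeeping by reducing to a single one-step inequality between consecutive shells, at the price of needing the Borel selection $g\colon A_{i+1}\to A_i$ --- which you handle correctly via a sparse labelling and Proposition~\ref{pr:subgraphing}. The key observation that every $y\in A_i$ already has one neighbour pointing strictly inward, so at most $d$ of its $\le d+1$ neighbours can lie in $A_{i+1}$, is exactly what makes the shell ratio $d$ rather than $d+1$.

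There is a slip in your last line: from $\mu(W)\le b\,(1+d+\cdots+d^{\,r})$ you cannot conclude $b\ge d^{-r}\mu(W)$, since the geometric sum strictly exceeds $d^r$. You only obtain $b\ge(1+d+\cdots+d^{\,r})^{-1}\mu(W)$, which for $d\ge 2$ is at least $\tfrac12\,d^{-r}\mu(W)$. In fact your bound is sharp: a full $d$-ary tree of depth $r$ hanging off a single boundary edge has $|W|=1+d+\cdots+d^{\,r}$ and exactly one edge to $W^c$, so the constant $d^{-r}$ as stated is unattainable --- the paper's own count ``at most $d^r$'' makes the same slip. Since the lemma is only ever used with $c_0=d^{-r'}$ as \emph{some} positive constant depending on $d$ and $r'$, the missing factor is harmless for every downstream argument; just state the honest constant.
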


\begin{proof} 
 For every $w\in W$ there is a path of length at most $r+1$ that
goes from $w$ to $Q$. Since $Q\subseteq W^c$, this path contains at least
one edge that connects $W$ to $W^c$. On the other hand, any
edge can arise this way for at most $d^{r}$ different vertices $w\in W$.
The required bound 
can now be derived from the so-called 
Mass Transport Principle (see e.g.\ \cite[Proposition 18.49]{lovasz:lngl}) which, roughly speaking, states that direct double-counting
(in)equalities from finite graphs also apply to the measures of vertex or edge sets in a graphing. 

For reader's convenience, let us sketch a more direct proof  of the
last step, so that we rely
only on the results stated in Section~\ref{basic}. Let $\phi_1,\dots,\phi_k$
generate the graphing as in Definition~\ref{df:graphing}. For every
sequence $f=(f_1,\dots,f_\ell)$ over the alphabet $\{\phi_1^{\pm1},\dots,\phi_k^{\pm1}\}$ with $1\le \ell\le r+1$, let $W_f$ consist of those $w_0\in W$ such that
$w_i:=f_i(f_{i-1}(\dots f_2(f_1(w_0))\dots))$ is defined for each  $i\in [\ell]$, $(w_0,\dots,w_\ell)$ is a path in $\GG$, and $w_\ell$ is the unique vertex
of this path that belongs to $W^c$; also, let $\pi_f$ map $w_0\in W_f$ to $(w_{\ell-1},w_\ell)\in E(W,W^c)$. It is
possible that different sequences $f_1$ and $f_2$ give the same path
$(w_0,\dots,w_\ell)$ for some starting point $w_0\in W_{f_1}\cap W_{f_2}$. 
We shrink the sets $W_f$ by picking for each obtained path exactly one sequence that gives it, 
e.g.\ the lexicographically
smallest one. This ensures that each edge in $E(W,W^c)$
belongs to the image of at most $d^{r}$ maps~$\pi_f$ while the sets $W_f$
are still Borel and cover the whole of~$W$. Now, the desired
bound follows, since each $\pi_f$ is a measure-preserving bijection from
$(W_f,\mu)$ to its image in $(E(W,W^c),\muC)$.
\qed\end{proof}

\subsection{Proof for graphings without odd cycles}\label{NoOddCycles}

We have all the ingredients to finish the proof of Theorem~\ref{shortalternating_theorem} in the case 
when $\GG$ has no odd cycles.  In this section, let $n\in \I N$ be arbitrary with $1\le n\le (n_0-1)/4$. 

First, let us show that $\tilde{H}_n$ and $\tilde{T}_n$ are disjoint. Assume for a 
contradiction that $x \in \tilde{H}_n \cap \tilde{T}_n$. Then there are two vertices $u_1, u_2\in U$ and an odd alternating path from $u_1$ to $x$ and an even alternating path from $u_2$ to $x$. The concatenation of these two paths (with the second path being reversed) is an odd alternating \textbf{walk} from $u_1$ to $u_2$. Since $\GG$ has no odd cycles,
we have that $u_1\not=u_2$. Also, we conclude that there is an odd alternating \textbf{path} from $u_1$ to $u_2$, since a shortest alternating walk in a bipartite graph is necessarily an alternating path. This 
path has length at most $4n-1\leq n_0$ and is augmenting, a contradiction.

A similar argument shows that there can be no edge within $\tilde{T}_n\cup U$ 
for otherwise we find an augmenting path of length at most $4n+1\le n_0$.

Any vertex outside of $X_n$ that is adjacent to $\tilde{T}_n$ will belong to $\tilde{H}_{n+1} \subseteq X_{n+1}$. We want to show that there are many such vertices, so we derive a lower bound on the measure of edges leaving $\tilde{T}_n$. 
By Proposition~\ref{prop:prop1}.2, we know that $\muN{\tilde{T}_n}=\muN{\tilde{H}_n}$. Also, every vertex of $\tilde{T}_n$ has degree at least $d$ while vertices of degree $d+1$ are $r_2$-sparse. Since the set
$\tilde{T}_{n}\cup U$ is independent, $U$ sends no edges to $O_n$ and $r_2$ is 
large,
we would expect that at least around half of the edges between $X_n$ and $O_n$ originate from $\tilde{T}_n$. The following inequalities make this intuition rigorous. For notational convenience, let
 \begin{equation}\label{eq:muCe}
 \muCNe XY:=\muCN{E(X,Y)},\qquad \mbox{for $X,Y\subseteq V$},
 \end{equation}
 denote the measure of edges between $X$ and $Y$. We have
 \begin{eqnarray*} \muCNe{\tilde{T}_n \cup U}{O_n} & \geq&
 d\,\muN{\tilde{T}_n} - \muCNe{\tilde{H}_n}{\tilde{T}_n \cup U} 
 \\ 
 &\geq& 
 d\,\muN{\tilde{H}_n} - \muCNe{\tilde{H}_n}{X_n}\ \geq \ 
 \muCNe{\tilde{H}_n}{O_n} - \muN{\tilde{H}_n \cap J} \\ 
 &=& 
\muCNe{X_n}{O_n} - \muCNe{\tilde{T}_n \cup U}{O_n} 
 - \muN{\tilde{H}_n \cap J}.
 \end{eqnarray*}
 Hence
\begin{equation}\label{eq:TU-O}
 \muCNe{\tilde{T}_n \cup U}{O_n} \geq \frac{1}{2}\big(\,\muCNe{X_n}{O_n} 
 - \muN{\tilde{H}_n \cap J}\,\big).
 \end{equation}
Recall that $c_0=d^{-r'}$.
By Lemmas~\ref{lemma:denseset} and~\ref{lemma:expansion},
the measure of edges leaving $X_n$ is at least $c_0\, \muN{X_n}$.

Take, for each $x \in \tilde{H}_n \cap J$, a shortest alternating path from $U$ to $x$. Its length is
at least $r_2/4$ because $\dist(U,J)>r_2/4$
by our assumption.  Moreover, since $J$ is $r_2$-sparse, the final $r_2/4$ edges of this path are unique to $x$: for different vertices of $\tilde{H}_n \cap J$ these segments are disjoint (and, obviously, these segments belong to $X_n$). Since these paths can be chosen in a Borel way, we conclude that
\begin{equation}\label{eq:HCapJ} 
 \muN{\tilde{H}_n\cap J} \leq \frac{4}{r_2}\, \muN{X_n}.
 \end{equation}

Assuming that $4/r_2  < c_0 /2$, we have by~\eqref{eq:TU-O} and~\eqref{eq:HCapJ} that 
\[
 (d+1)\,\muN{X_{n+1}\setminus X_n}\ge \muCNe{\tilde{T}_n \cup U}{O_n} \geq \frac{c_0}{2}\,\muN{X_n} - \frac{c_0}{4}\,\muN{X_n} =  \frac{c_0}{4}\,\muN{X_n}.
 \] We get by induction on $n$ that
 \[1
 \geq  \muN{X_{n+1}} \geq \left(1+\frac{c_0}{4(d+1)}\right)\, \muN{X_n} \geq 
 \left(1 + \frac{c_0}{4(d+1)} \right)^{n+1} \muN{U}.\]
  In particular, by taking $n =\lfloor(n_0-1)/4\rfloor$ we conclude that $\muN{U}\le (1+c)^{-n_0}/c$, as desired.

\subsection{Sketch of the proof in the general case}\label{sketch}

We continue using the notation introduced in Section~\ref{alternating}
but we need a more refined analysis of different types of vertices in $X_n$ than the one in Section~\ref{NoOddCycles}. Since odd cycles are allowed, the sets $\tilde{H}_n$ and $\tilde{T}_n$ need not be disjoint. It will be convenient to introduce the following notation:
\begin{eqnarray*} 
 H_n &:=& \tilde{H}_n \setminus \tilde{T}_n,\\
 T_n &:=& \tilde{T}_n \setminus \tilde{H}_n,\\
 B_n &:=& \tilde{H}_n \cap \tilde{T}_n.
\end{eqnarray*}
 Here, $H$ stands for ``head'', $T$ stands for ``tail'', and $B$ stands for ``both''. These sets satisfy the following simple properties in addition to those already stated in Proposition~\ref{prop:prop1}.

\begin{propo}\label{prop:prop2} If $1\le n\le n_0/2$, then the following properties hold.
\begin{enumerate} 
\item $X_n$ is the disjoint union of $U$, $T_n$, $H_n$ and $B_n$.
\item $B_1 \subseteq \dots \subseteq B_n$.
\item $M$ gives a perfect matching between $T_n$ and $H_n$, and also within $B_n$. In particular,
$\muN{H_n} = \muN{T_n}$.\qed
\end{enumerate}
\end{propo}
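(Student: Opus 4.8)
The plan is to verify each of the three claims directly from the definitions of $H_n,T_n,B_n$ and the properties already established in Proposition~\ref{prop:prop1}.

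\textbf{Part 1.} By definition $X_n = U\cup\tilde H_n\cup\tilde T_n$, and $H_n,T_n,B_n$ partition $\tilde H_n\cup\tilde T_n$ into the three pieces $\tilde H_n\setminus\tilde T_n$, $\tilde T_n\setminus\tilde H_n$, $\tilde H_n\cap\tilde T_n$. So it only remains to check that $U$ is disjoint from $\tilde H_n\cup\tilde T_n$, which is exactly the ``disjoint from $U$'' clause of Proposition~\ref{prop:prop1}.2.

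\textbf{Part 2.} Monotonicity: I would argue that $\tilde H_n\subseteq\tilde H_{n+1}$ and $\tilde T_n\subseteq\tilde T_{n+1}$ are not themselves enough (the difference of increasing sets need not increase), so instead observe the structural fact that once a vertex is in $\tilde H_n\cap\tilde T_n$ it stays in both. Concretely, if $x\in B_n$ then there is an odd alternating path $p$ from some $u_1\in U$ to $x$ of length $\le 2n-1$ and an even one $q$ from some $u_2\in U$ to $x$ of length in $[2,2n]$. To place $x$ in $B_{n+1}$ I need to produce \emph{both} an odd path of length $\le 2(n+1)-1$ and an even path of length $\le 2(n+1)$ ending at $x$; these are simply $p$ and $q$ again, since $2n-1\le 2n+1$ and $2n\le 2n+2$. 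Hence $B_n\subseteq B_{n+1}$.

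\textbf{Part 3.} For the matching structure: by Proposition~\ref{prop:prop1}.2, $M$ gives a bijection $\tilde H_n\to\tilde T_n$ via matching edges. Restrict this bijection. A vertex $x\in\tilde H_n$ is matched; call its $M$-partner $x'$. If $x\in B_n$ then $x\in\tilde T_n$, so $x'\in\tilde H_n$ (pre-image under the same bijection is $\tilde H_n$, and in fact $x'$ is reachable by an even path of length one more than the odd path to $x$ extended across $xx'$, and by an odd path similarly), and one checks $x'\in B_n$ as well — so the bijection maps $B_n$ into $B_n$, i.e.\ restricts to an involution-free perfect matching within $B_n$. For $x\in H_n=\tilde H_n\setminus\tilde T_n$, its partner $x'\in\tilde T_n$; I claim $x'\notin\tilde H_n$, for if $x'\in\tilde H_n\cap\tilde T_n=B_n$ then pushing the witnessing paths for $x'$ back across the matching edge $x'x$ would put $x\in\tilde T_n$, contradicting $x\in H_n$. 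Hence $x'\in T_n$, so $M$ carries $H_n$ bijectively onto $T_n$. The measure equalities $\muN{H_n}=\muN{T_n}$ then follow because $(V,\C B,M,\mu)$ is a graphing (Proposition~\ref{pr:subgraphing}), so a Borel injection realized by matching edges is measure-preserving. The one point requiring care — and the only genuine obstacle here — is the bookkeeping of ``pushing a witnessing path across a matching edge'': one must check that extending an odd (resp.\ even) alternating path of length $\le 2n-1$ (resp.\ $\le 2n$) by the matching edge $xx'$ yields an alternating path (not merely a walk) of admissible length and correct parity, and that the starting edge incidence at the new endpoint is consistent with $x'$ being in $\tilde T_n$ (resp.\ $\tilde H_n$); this is where the length bound $n\le n_0/2$ and the alternation convention are used, but it is routine once set up carefully.
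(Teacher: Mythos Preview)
Your argument is correct and matches the paper's treatment: the proposition is stated with a \qed\ and no proof, meaning the authors regard all three parts as immediate from the definitions together with Proposition~\ref{prop:prop1}. One small remark on Part~2: your worry that ``the difference of increasing sets need not increase'' is a red herring here, since $B_n=\tilde H_n\cap\tilde T_n$ is an \emph{intersection}, and intersections of increasing families are increasing; the inclusions $\tilde H_n\subseteq\tilde H_{n+1}$ and $\tilde T_n\subseteq\tilde T_{n+1}$ (immediate from the ``length at most'' definitions) already give $B_n\subseteq B_{n+1}$ in one line. Your Part~3 is fine; the ``path versus walk'' issue you flag is indeed routine (if the extension by the matching edge $x'x$ self-intersects, the initial segment of the witnessing path up to $x$ already certifies $x\in\tilde T_n$ or forces a parity contradiction on the position of the matching edge).
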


Now we are ready to sketch the proof of Theorem~\ref{shortalternating_theorem}, 
pointing out the main ideas without introducing all technicalities.
We encourage the reader to study the whole outline before reading the proof and to refer back to it whenever necessary. Without understanding the basic outline, some later definitions may seem unmotivated. 

\begin{enumerate}
\item Assuming there are no short augmenting paths, we would like to show that $\muN{X_n}$ 
grows exponentially with $n$.

\item By Lemmas~\ref{lemma:denseset} and~\ref{lemma:expansion}, the set $X_n$ expands.
If there are plenty of edges leaving $X_n$ from $T_n$ or $B_n$, then the other ends of these edges will be part of $X_{n+1}$, fueling the desired growth. If this is not the case, then there has to be many tail-tail or tail-both edges for the same reasons as in Section~\ref{NoOddCycles}: $\muN{H_n} = \muN{T_n}$, every vertex of $T_n$ has degree at least $d$ while
only a small fraction of vertices of $H_n$ can have degree $d+1$.

\item A tail vertex that has another tail- or both-type neighbor will normally become a both-type vertex in the next step. In this case, even if $X_n$ does not grow, the set $B_n$ grows within $X_n$, still maintaining the desired expansion.

\item The problem is that certain tail-vertices will not become both-type, even though they possess a both-type neighbor. These will be called \textit{\tough}. The bulk of the proof is about bounding their number. The key idea here is that we can associate to each \tough\ vertex $x$ a subset $F_n(x)$ of $B_n$ called the \textit{family} of $x$. 

\item As we will see in Lemma~\ref{disjointfamilies}, families associated to different vertices are pairwise disjoint. Thus there cannot be too many \tough\ vertices with large families. On the other hand,
Claim~\ref{expanding_claim} shows that if a vertex stays \tough\ for an extended amount of time, then its family has to grow. These two observations will be the basis for showing
that $B_n$ grows within $X_n$, thus indirectly contributing to the growth of $X_n$. 
\end{enumerate}

The proof is organized as follows.  We define \tough\ vertices and their families  in Section~\ref{combsec} 
where  their basic properties are stated and proved. Theorem~\ref{shortalternating_theorem} is  proved in Section~\ref{largeoutside_section} by introducing a function $I(n)$ that exponentially 
grows with $n$ for $n\le (n_0-2)/2$, is bounded by a constant and satisfies $I(0)=\muN{U}$. This will
give the desired upper bound on the measure of~$U$.

\subsection{Combinatorics of alternating paths}\label{combsec}

In this section we will be mainly concerned about how edges within $T_n \cup U$ and between $B_n$ and $T_n \cup U$ contribute to the growth of $B_n$. We implicitly assume  in all following claims that $1\le n\le (n_0-1)/2$. In particular,
there are no augmenting paths of length at most $2n+1$.

\begin{lemma}\label{TTedge_lemma} If $x,y \in T_n \cup U$ and $\{x,y\} \in E$, then $x \in B_{n+1}$ or $y \in B_{n+1}$.
\end{lemma}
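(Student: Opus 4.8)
The plan is to argue by contradiction: suppose $x, y \in T_n \cup U$ with $xy \in E$, but neither $x$ nor $y$ lies in $B_{n+1}$. I want to produce an alternating path that is too short and either augmenting (contradicting the no-short-augmenting-paths hypothesis) or witnesses membership in $B_{n+1}$ after all. The key structural fact to exploit is that a vertex of $\tilde T_n$ is the endpoint of an even alternating path (of length $\ge 2$) from $U$, and a vertex of $U$ is trivially the endpoint of the length-$0$ alternating path from itself. So in all four cases ($x,y$ each in $T_n$ or in $U$) we have an even alternating path $p_x$ from $U$ to $x$ of length at most $2n$, and similarly $p_y$ to $y$.

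\medskip

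\textbf{The main case analysis.} Consider the edge $xy$. Since $p_x$ ends at $x$ with a matched edge (if $|p_x| \ge 2$) or $x$ is unmatched (if $x \in U$), I want to append $xy$ to $p_x$. If $xy \notin M$, then $p_x$ followed by $xy$ is an alternating path of odd length at most $2n+1$ ending at $y$; combined with the even alternating path $p_y$ to $y$ of length $\le 2n$, this shows $y$ is the endpoint of alternating paths of both parities within the relevant length bounds — but I must be careful: concatenating walks need not give paths when odd cycles are present, which is exactly the subtlety flagged in the sketch. So instead I should argue directly: $p_x$ extended by $xy$ is an odd alternating \emph{walk} of length $\le 2n+1$ from $U$ to $y$; trimming it to an odd alternating path of length $\le 2n+1$, if $y$ were unmatched this would be an augmenting path of length $\le 2n+1$, contradiction — so $y$ is matched, say to $z$ via $yz \in M$. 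Then the trimmed odd path to $y$, extended by $yz$, is an even alternating path of length $\le 2n+2$ from $U$ to $z$, but I need length $\le 2n$ to place $z$ in $\tilde T_{n+1}$ cleanly, so I should instead observe that $y \in \tilde T_{n+1}$ (via $p_y$, length $\le 2n \le 2(n+1)$) and $y \in \tilde H_{n+1}$ (via the trimmed odd path of length $\le 2n+1 \le 2(n+1)-1$), hence $y \in B_{n+1}$, contradiction. If instead $xy \in M$, then I symmetrically look at $p_y$: since $xy \in M$, the edge $xy$ cannot be the matched edge incident to $y$ coming from $p_y$ unless $p_y$ already uses it; in either subcase I can extend one of the two even paths by an unmatched edge preceding $xy$ or use $xy$ itself to reach the other vertex via a path of the opposite parity, again forcing $x \in B_{n+1}$ or an augmenting path.

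\medskip

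\textbf{Cleaning up the edge-in-$M$ subcase.} The $xy \in M$ situation deserves separate care: if $xy \in M$ and $x \in U$, then $x$ is unmatched, contradicting $xy \in M$; so $x, y \in T_n$. Now $x, y \in \tilde T_n$ means each has an even alternating path from $U$; the last edge of such a path into $x$ is matched, i.e. it is $xy$ itself (since $M$ is a matching and $xy \in M$), so $p_x$'s penultimate vertex is $y$, meaning $p_x = p_y' \cdot yx$ for an odd alternating path $p_y'$ to $y$; thus $y \in \tilde H_n \subseteq \tilde H_{n+1}$ and $y \in \tilde T_n \subseteq \tilde T_{n+1}$, so $y \in B_{n+1}$ (indeed $y \in B_n$), contradiction.

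\medskip

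\textbf{Expected obstacle.} The delicate point, as the authors themselves emphasize in Section~\ref{sketch}, is the walk-versus-path issue: concatenations of alternating walks in graphs with odd cycles need not trim to alternating paths of the same parity, so I cannot blithely glue $p_x$, the edge $xy$, and the reverse of $p_y$. The safe route is to only ever \emph{append a single edge} to one of the known even alternating paths and then invoke a trimming lemma for a single alternating walk (a shortest alternating walk from $U$ to a given vertex of a given parity is an alternating path, and its length does not exceed that of the walk) — this keeps parities and length bounds under control and is exactly where I'd expect to spend the most care, double-checking that length $\le 2n+1$ or $\le 2n+2$ always suffices to land in $X_{n+1}$, $\tilde H_{n+1}$, or $\tilde T_{n+1}$ under the standing hypothesis $1 \le n \le (n_0-1)/2$.
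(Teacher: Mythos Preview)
Your overall structure is sound and close to the paper's argument, but there is a real gap in the $xy\notin M$ case: the ``trimming lemma'' you invoke is false.

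You claim that a shortest alternating walk from $U$ to a given vertex of a given parity is an alternating path. This fails in the presence of odd cycles (blossoms). For a concrete counterexample, take vertices $u,a,b,c,d$, edges $ua,ab,bc,cd,db$, matching $M=\{ab,cd\}$, with $u$ unmatched. Then $u,a,b,c,d,b$ is an odd alternating walk of length~$5$ from $u$ to $b$, but the \emph{only} path from $u$ to $b$ in this graph is $u,a,b$, which has even length~$2$. So there is no odd alternating path from $u$ to $b$ at all, let alone one obtained by trimming. This is exactly the blossom obstruction that makes the general case harder than the bipartite one, and appending a single edge to a path does not avoid it: if $y$ lies on $p_x$ at an even position, the walk $p_x\cdot xy$ contains an odd cycle through $y$ and cannot be trimmed to an odd alternating path to~$y$.

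The paper's fix is short and avoids trimming altogether: take $p$ and $q$ to be \emph{shortest} even alternating paths witnessing $x,y\in\tilde T_n$ (length~$0$ if the vertex is in $U$), and assume without loss of generality that $\len{p}\le\len{q}$. Then $y$ cannot lie on $p$: an odd position would put $y\in\tilde H_n$, contradicting $y\in T_n\cup U$; an even position $j<\len{p}\le\len{q}$ would give a strictly shorter even alternating path to $y$, contradicting the minimality of~$q$. Hence $p$ followed by the edge $xy$ is already a \emph{path}, of odd length at most $2n+1$, witnessing $y\in\tilde H_{n+1}$. Combined with $y\in\tilde T_n\subseteq\tilde T_{n+1}$ (or, if $y\in U$, yielding an augmenting path and a contradiction), this gives $y\in B_{n+1}$.

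Your handling of the $xy\in M$ subcase is correct and in fact shows that case is vacuous (it forces $y\in\tilde H_n$, contradicting $y\in T_n\cup U$). But for the main case you need the shortest-path-plus-length-comparison device, not a trimming lemma.
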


\begin{proof} It is sufficient to prove that  $x$ or $y$ is in $\tilde{H}_{n+1}$. Let $p$ 
and $q$ be shortest alternating paths that witness $x\in \tilde{T}_n$ and $y \in \tilde{T}_n$ respectively. We may assume without loss of generality that $\len{p} \leq \len{q}$. (Recall that e.g.\ $\len{p}$ denotes
the number of edges in the path $p$.) The vertex $y$ cannot lie on $p$: otherwise either there would be a shorter alternating path witnessing $y \in T_n$, or we would have $y \in \tilde{H}_n$ and not in $T_n \cup U$. Hence, by adding the edge $\{x,y\}$ to $p$ we obtain an alternating path of length at most $2n+1$ that witnesses $y \in \tilde{H}_{n+1}$.
 \qed\end{proof}

Edges running between $T_n \cup U$ and $B_n$ are more complicated to handle. If $b \in B_n$ and $t \in T_n \cup U$ are adjacent, but all paths witnessing $b \in \tilde{T}_n$ run through $t$, then we cannot simply exhibit that $t \in \tilde{H}_{n+1}$ by adding the  edge $\{b,t\}$ to the end of such a path since it would become self-intersecting. The following definition captures this behavior.

\begin{definition}\label{tough_def}~ 
\begin{itemize} \item A vertex $x \in T_n \cup U$ is \emph{\tough\ (at time $n$)} if it is adjacent to one or more vertices in $B_n$, but $x \not \in \tilde{H}_{n+1}$. 
\item An edge $\{x,y\} \in E$ is \emph{\tough\ (at time $n$)} if $x \in T_n \cup U$, $y \in B_n$ and $x$ is a \tough\ vertex.
\end{itemize}
 Let $\TT_n\subseteq T_n\cup U$ denote the set of vertices that are \tough\ at time $n$.
\end{definition}

 We would like to bound the number of \tough\ vertices. In order to do so, we will associate certain subsets of $X_n$ to each \tough\ vertex in a way that subsets belonging to different \tough\ vertices do not intersect. Then we will show that these subsets become large quickly.

\begin{remark} We think of $n$ as the time variable, and all the sets evolve as $n$ changes. Usually $n$ will denote the ``current" moment in this process. In the following definitions of age, descendant, and family, there will be a hidden dependence on $n$. When talking about the age or the family of a vertex, we always implicitly understand that it is taken at the current moment. 
\end{remark}

\begin{definition}\label{age_def}
The \emph{age} of a vertex $x \in \TT_n$ is $a(x):=n$ if $x\in U$ and $a(x) := n - \min\{k: x \in T_k\}$ otherwise.
\end{definition}

\newcommand{\weaklyalternating}{weakly-alternating} 
Let us call a path \emph{\weaklyalternating} if it starts with an unmatched
edge and its edges alternate between $E\setminus M$ and $M$. This
is the same as the definition of an alternating path except we do not
require that the first vertex is unmatched.

\begin{definition}\label{descendent_def} Fix a vertex $x\in \TT_n$. A set $D \subseteq X_n\setminus\{x\}$ has the \emph{descendant property} with respect to $x$ if the following is true. For every $y \in D$ there are two \weaklyalternating\ paths $p$ and $q$ starting in $x$ and ending in $y$, such that 
\begin{itemize}
\item 
 $p$ is odd and $q$ is even,
\item $p, q \subseteq D \cup \{x\}$,
\item $\len{p} + \len{q} \leq 2a(x)+1$.
\end{itemize}
\end{definition}

Note that the paths $p$ and $q$ in the above definition may intersect outside of $\{x,y\}$. Clearly, the sets satisfying the descendant property with respect to $x$ are closed under union.

\begin{definition}\label{family_def} The \emph{family $F_n(x)$} of a vertex $x\in \TT_n$ at time $n$ is 
the largest subset of $X_n\setminus\{x\}$ that satisfies the descendant property with respect to~$x$.
(In other words, $F_n(x)$ is the union of all sets that satisfy the descendant property.) 
\end{definition}

\begin{claim}\label{claim:tough_edge} If $x \in \TT_n$ and $\{x,y\}$ is a \tough\ edge then $y$ is in the family of $x$. In particular, every \tough\ vertex has a non-empty family. 
\end{claim}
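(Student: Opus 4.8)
\textbf{Proof proposal for Claim~\ref{claim:tough_edge}.}

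The plan is to show directly that $\{y\}$ itself satisfies the descendant property with respect to $x$, which immediately gives $y \in F_n(x)$. For this I need two \weaklyalternating{} paths from $x$ to $y$ — one odd, one even — living inside $\{x,y\}$ and having total length at most $2a(x)+1$. Since there are only two vertices available, the only candidates are the single edge $xy$ (length $1$) and the ``doubled'' walk $x\to y\to x\to y$... but wait, that is not a path. So in fact the two paths must be the edge $xy$ of length $1$ and a path of length $0$, which is impossible since a length-$0$ path starts and ends at $x$, not $y$. Hence I cannot literally take $D=\{y\}$; instead I should argue that $F_n(x)$ is automatically nonempty by exhibiting a slightly larger witnessing set, OR reinterpret: the claim likely wants $y\in F_n(x)$ where the witnessing paths may use other vertices of $X_n$ too (the definition only requires $p,q\subseteq D\cup\{x\}$ for the \emph{maximal} $D=F_n(x)$, so I just need \emph{some} $D$ with the descendant property containing $y$).

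So the real plan: First, recall what being \tough{} means — $x\in T_n\cup U$, it has a neighbor $y\in B_n$, and $x\notin\tilde H_{n+1}$. Since $y\in B_n=\tilde H_n\cap\tilde T_n$, there is an odd alternating path $p_0$ from $U$ to $y$ of length $\le 2n-1$ and an even alternating path $q_0$ from $U$ to $y$ of length $\le 2n$. Also, because $x\in T_n\cup U$ there is an even alternating path (of length $\le 2n$, or length $0$ if $x\in U$) from $U$ to $x$; denote it $s$. Now consider the walk obtained by going along $s$ to reach $x$, then... no — the descendant property is about \weaklyalternating{} paths \emph{starting at $x$}, not at $U$. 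The key observation is that the edge $xy$, together with the matching edge $M$-partner structure, lets me reverse-engineer paths rooted at $x$. Concretely: $x$ has age $a(x)$, so $x\in T_{n-a(x)}$ (or $x\in U$ with $a(x)=n$), meaning there is an even alternating path witnessing $x\in\tilde T_{n-a(x)}$ of length $\le 2(n-a(x))$; and since $x$ is \tough{}, $x$ persisted as a tail (never became ``both'') through times $n-a(x),\dots,n$. The two \weaklyalternating{} paths from $x$ to $y$ should be: (i) the single edge $xy$ (odd, length $1$), and (ii) an even \weaklyalternating{} path from $x$ to $y$ built by taking the edge $xy$ to reach $y$, then... again length issues. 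I think the correct pair is $p=$ edge $xy$ and $q=$ the even \weaklyalternating{} path $x\to x'\to\cdots\to y$ obtained from a path witnessing $y\in\tilde H_n$ concatenated appropriately with the edge at $x$: since $y\in\tilde T_n$ via an even path ending at $y$, and the last edge of that path is unmatched (even-length alternating path from $U$), I can append the matched edge at $y$...

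Given the delicacy, here is the clean strategy I would actually execute. Since $xy$ is \tough{}, $x\in T_n\cup U$ and $y\in B_n$. If $x\in U$ then $a(x)=n$; otherwise $x\in T_{n-a(x)}$ by definition of age, so there is an even alternating path $r$ from $U$ to $x$ with $\len r\le 2(n-a(x))$. Because $y\in\tilde T_n$, fix a shortest even alternating path $q'$ from $U$ to $y$; because $y\in \tilde H_n$ fix a shortest odd alternating path $p'$. Now the \weaklyalternating{} path from $x$ to $y$ that I want is read off by traversing $q'$ or $p'$ \emph{backwards} from $y$ to $U$ and then... no, it must end at $y$. The honest move: take $D := (V(p')\cup V(q')\cup\{x\})\setminus\{x\}$ restricted to $X_n$, and check $y$ has the two required \weaklyalternating{} paths inside $D\cup\{x\}$ using the edge $xy$ for one of them and a segment of $p'$ or $q'$ reversed-and-reattached for the other, with total length $\le 2n+1 \le 2a(x)+1$... but $a(x)\le n$, so $2a(x)+1\le 2n+1$ goes the wrong way. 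This is exactly the crux: I must use that $x$ is \emph{\tough{}}, hence $x\notin\tilde H_{n+1}$, which \emph{forbids} the naive short path $p'+xy$ and simultaneously, via the contrapositive of how toughness is maintained, forces $x$ to have been a tail since time $n-a(x)$ — and during that whole period $y$ (or an $M$-ancestor chain of $y$) was being reached, so the relevant paths have length controlled by $a(x)$, not $n$.

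\textbf{The main obstacle.} The hard part is precisely the length bookkeeping: producing, inside $X_n$, an odd and an even \weaklyalternating{} path from $x$ to $y$ whose combined length is at most $2a(x)+1$ rather than the easy-but-useless bound $2n+1$. I expect this to require: (a) using the edge $xy$ as the odd path $p$ (length $1$); (b) for the even path $q$, tracing how $y$ entered $B_n$ relative to when $x$ entered $T_{n-a(x)}$ — since $x$ is adjacent to $y\in B_n$ but failed to enter $\tilde H_{n+1}$, the only obstruction is that every short alternating path to $y$ passes through $x$, and I can surgically cut such a path at $x$ to get a \weaklyalternating{} $x$-$y$ path of length $\le 2a(x)$; (c) checking parity. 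So the skeleton of the write-up is: set $p := (x,y)$; use toughness ($x\notin\tilde H_{n+1}$) to conclude every alternating $U$-$y$ path of length $\le 2n$ contains $x$; pick such a path $q_0$ and let $q$ be its sub-path from $x$ to $y$ (after possibly re-routing through the matching edge at $x$ to fix that it starts with an unmatched edge — this is where $x\in T\cup U$, i.e. $x$ is either unmatched or $M$-matched into $H_n$, gets used); verify $q$ is even, $\len q\le 2a(x)$ (using that the $x$-to-$y$ tail of $q_0$ cannot be too long precisely because of the age bound — a vertex of age $a(x)$ has been a tail only since step $n-a(x)$, so the relevant alternating structure around it is at most $a(x)$ steps deep), so $\len p+\len q\le 2a(x)+1$; hence $\{y\}$ together with the vertices of $q$ forms a set with the descendant property, giving $y\in F_n(x)$, and in particular $F_n(x)\neq\emptyset$.
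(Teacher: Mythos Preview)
Your final skeleton is essentially the paper's proof: take an even alternating path $q_0$ from $U$ to $y$ witnessing $y\in\tilde T_n$; toughness of $x$ forces $x$ to lie on $q_0$; the segment of $q_0$ from $x$ to $y$ together with the single edge $xy$ furnishes the two required \weaklyalternating\ paths. So the approach is right, but two points in your write-up are either wrong or missing.

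First, the ``re-routing'' worry is a red herring. The vertex $x$ is automatically at an \emph{even} position on $q_0$. Indeed, every interior vertex at an odd position $2l+1$ of $q_0$ is $M$-matched to $q_0$'s vertex at position $2l+2$, and that vertex lies in $\tilde T_n$ (via the initial segment of $q_0$). But $x\in T_n\cup U$: if $x\in U$ then $x$ is unmatched; if $x\in T_n$ then its $M$-partner lies in $H_n=\tilde H_n\setminus\tilde T_n$. Either way $x$ cannot sit at an odd position. So write $x=q_0^{(2l)}$, $y=q_0^{(2k)}$; the segment from $x$ to $y$ starts with an unmatched edge automatically and has even length $2k-2l$. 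This also gives the length bound cleanly: the initial segment $(q_0^{(0)},\dots,q_0^{(2l)})$ witnesses $x\in T_l$ (using $x\notin\tilde H_n\supseteq\tilde H_l$), so $a(x)\ge n-l\ge k-l$, whence $2(k-l)+1\le 2a(x)+1$.

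Second, and this is the real gap: to place $y$ in $F_n(x)$ you must exhibit a set $D$ with the descendant property, and that property demands the two paths \emph{for every} $z\in D$, not just for $y$. You cannot take $D=\{y\}$ (as you noticed, the only path in $\{x,y\}$ is $xy$), and your proposed $D$ (the vertices of $q$) is correct, but you never verify the property for $z\ne y$. The paper's neat observation is that for any $z=q_0^{(m)}$ with $2l<m\le 2k$, the two paths are: (i) follow $q_0$ forward from $x$ to $z$ (length $m-2l$), and (ii) take the edge $xy$ then follow $q_0$ backward from $y$ to $z$ (length $1+2k-m$). These have opposite parities, both start with an unmatched edge, both stay in $D\cup\{x\}$, and their lengths sum to exactly $2k-2l+1\le 2a(x)+1$ --- the same bound for every $z$. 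That is the missing piece.
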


\begin{proof} Let $p$ be a path that witnesses $y \in \tilde{T}_n$. Now if $p$ appended by the edge $\{y,x\}$ would be a path then it would witness $x \in \tilde{H}_{n+1}$. Since this is not the case,  $x$ has to lie on $p$. Suppose $x = p_{2l}$ and $y = p_{2k}$. Let $D$ denote the set of vertices that the path $p$ visits after leaving~$x$. For any point $z \in D$ there are two \weaklyalternating\ paths from $x$ to $z$. One is given by following 
$p$ from $x$ to $z$ and the other by taking the edge $\{x,y\}$ and then walking backwards on $p$.  The total length of these two paths is $2k-2l +1$. Since the age of $x$ by Definition~\ref{age_def} is at least $n-l\ge k-l$ we see that $2k-2l+1 \leq 2a(x)+1$. Hence these two paths satisfy all conditions of Definition~\ref{descendent_def} and $D$ has the descendant property with respect to $x$. 
We conclude by Definition~\ref{family_def} that $y\in F_n(x)$. 
 \qed\end{proof}

\begin{claim} \label{cl:subsetB} The family of any \tough\ vertex is a subset of $B_n$. 
\end{claim}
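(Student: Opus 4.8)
\medskip\noindent\textbf{Proof plan.}
The plan is to show directly that every $y\in F_n(x)$ satisfies $y\in\tilde H_n$ and $y\in\tilde T_n$, which by the definition of $B_n$ gives $y\in B_n$. Fix $x\in\TT_n$ and $y\in F_n(x)$. By the descendant property there are a \weaklyalternating\ odd path $p$ and a \weaklyalternating\ even path $q$, both from $x$ to $y$ and both contained in $F_n(x)\cup\{x\}$, with $\len p+\len q\le 2a(x)+1$. Since $y\ne x$ we have $\len p\ge 1$ and $\len q\ge 2$, hence $\len p\le 2a(x)-1$ and $\len q\le 2a(x)$.

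First I would dispose of the case $x\in U$, where $a(x)=n$ and $x$ is unmatched. Then $p$ and $q$, which begin at the unmatched vertex $x$ with an unmatched edge, are genuine alternating paths issuing from $U$: the odd path $p$ has length at most $2n-1$ and witnesses $y\in\tilde H_n$, while the even path $q$ has length between $2$ and $2n$ and witnesses $y\in\tilde T_n$; hence $y\in B_n$.

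Now assume $x\notin U$, and set $m:=\min\{k:x\in T_k\}$, so that $a(x)=n-m$. Let $r$ be a shortest alternating path from $U$ to $x$. Since $x\in T_n$ we have $x\notin\tilde H_n$, so there is no odd alternating path from $U$ to $x$ of length at most $2n-1$; in particular $r$ is even, and $\len r\le 2m$ because $x\in\tilde T_m$. Concatenating $r$ with $p$ and with $q$ produces two alternating \emph{walks} from $U$ to $y$ that begin with an unmatched edge and alternate throughout (the match-statuses agree at $x$ since the even path $r$ ends with a matched edge while $p$ and $q$ begin with an unmatched edge); the first walk is odd of length at most $2m+2a(x)-1=2n-1$, the second even of length between $2$ and $2m+2a(x)=2n$. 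If each of these walks could be replaced by an alternating \emph{path} from $U$ to $y$ of the same parity and no larger length, we would conclude $y\in\tilde H_n\cap\tilde T_n=B_n$, completing the proof.

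Carrying out that replacement is the main obstacle, and it is precisely the difficulty flagged earlier: with odd cycles around, an alternating walk need not trim to an alternating path. I would attack it as follows. Any closed alternating sub-walk of \emph{even} length may simply be deleted, preserving alternation and shortening the walk, so the trimming can only stall at an odd closed alternating sub-walk, i.e.\ at an odd alternating cycle $C$; let $z$ be the first vertex of $C$ encountered while traversing the walk from $U$, so that the initial segment up to $z$ is already a simple alternating path. Since $z$ lies on $p$ or on $q$ it belongs to $F_n(x)\cup\{x\}$, so the descendant property supplies an odd and an even \weaklyalternating\ path from $x$ to $z$ of bounded total length; splicing the one of the correct parity onto the initial $U$-to-$z$ segment gives an alternating walk from $U$ to $x$ which, modulo a careful bookkeeping of lengths and parities (to ensure the iterated cleanup terminates and the rerouting is spliced with the right parity), collapses to an alternating path from $U$ to $x$ that is either odd of length at most $2n-1$, contradicting $x\notin\tilde H_n$, or even of length strictly smaller than $\len r$, contradicting the minimality of $r$. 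Thus no obstructing odd cycle can arise, both walks reduce as required, and $y\in B_n$. I expect this trimming step — making the cycle removal terminate and controlling parities at the splice point — to be the only genuinely delicate part of the argument.
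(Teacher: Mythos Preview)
Your overall strategy---concatenate a shortest even witness $s$ from $U$ to $x$ with the family paths $p,q$ and argue the results are genuine alternating paths---is exactly what the paper does. The gap is the trimming step you flag as delicate, and your sketch of it does not go through as written. First, an ``odd alternating cycle'' does not exist: a cycle with strictly alternating matched and unmatched edges has even length, so the obstruction is really a closed sub-walk whose first and last edges have the \emph{same} status. Second, and more seriously, after you reroute via the descendant property you obtain only an alternating \emph{walk} from $U$ to $x$: the family path you splice in lies inside $F_n(x)\cup\{x\}$, and you have not shown that this set avoids the initial segment $r_0,\dots,r_{i-1}$ (you only know that segment avoids the particular path $p$, not all of $F_n(x)$). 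So you are back at the same trimming problem, and the claimed dichotomy ``odd of length $\le 2n-1$ or even of length $<\len r$'' is not what the splice actually produces (the parity-compatible splice always yields an odd walk). One can rescue this by iterating the reroute and observing that the index of the first intersection with $r$ strictly decreases, but you have not carried that out, and the ``careful bookkeeping'' you defer is the whole content of the argument.

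The paper sidesteps trimming entirely by first proving the single statement $F_n(x)\cap s=\emptyset$. The point is that families are closed under $M$-partners, so if $F_n(x)$ meets $s$ then some matched edge $\{s_{2i-1},s_{2i}\}$ of $s$ lies in $F_n(x)$; take the \emph{minimal} such $i$, so that $s_0,\dots,s_{2i-2}$ are outside $F_n(x)$ by minimality. The descendant property then gives an odd \weaklyalternating\ path from $x$ to $s_{2i}$ lying inside $F_n(x)\cup\{x\}$, hence disjoint from $s_0,\dots,s_{2i-2}$; appending its reverse to $s_0,\dots,s_{2i}$ yields a \emph{simple} odd alternating path from $U$ to $x$ of length at most $2n+1$, contradicting $x\in\TT_n$. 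With disjointness in hand, $s$ followed by $p$ (resp.\ $q$) is automatically a simple path of the right parity and length, and $y\in B_n$ falls out immediately. The moral: choose the earliest intersection of $s$ with the whole family $F_n(x)$, not with the single path $p$; then minimality does in one stroke what your iterated cleanup would have to do step by step.
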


\begin{proof} Let $x \in \TT_n$ be a \tough\ vertex and let $s$ be a shortest alternating path witnessing $x \in T_{n} \cup U$. Let us denote $k:=\len{s}/2$.

Let us show that the family of $x$ is disjoint from the path $s$. Suppose that this fails. It is clear that any family consists of pairs of matched vertices. Since $s$ is an alternating path, there is $i$ such that the edge
$\{s_{2i-1}, s_{2i}\}$ belongs to $M$ and lies inside $F_n(x)$. Let $i$ be the smallest such index. 
Then, by Definition~\ref{descendent_def},  there is an odd \weaklyalternating\ path ${p}$ from $x$ to $s_{2i}$   such that $p$ runs within the family and its length is at most $2a(x)+1 \leq 2n-2k+1$. 
Since $i$ was the smallest such index, the path ${p}$ is disjoint from $s_0, s_1, \dots s_{2i-1}$. 
Thus by appending $s_0, s_1,\dots,s_{2i}$ by the reverse of ${p}$ we get an alternating path from 
$U$ to $x$ ending in an unmatched edge, whose length is at most $2i+2n-2k+1 \leq 2n+1$. This path witnesses $x \in \tilde{H}_{n+1}$, contradicting that $x$ is  \tough.  

Now, for any point $y$ in the family we can take the two paths $p$ and $q$  from $x$ to $y$ as in Definition~\ref{descendent_def}. By the age requirement in Definition~\ref{descendent_def}, we get that $\len{p} + \len{q} \leq 2a(x) + 1 = 2n - 2k +1$.  Hence $\len{s} + \len{p} + \len{q} \leq 2n+1$ and thus $\len{s}+\len{p} \leq 2n-1$ and $\len{s} + \len{q} \leq 2n$. Since $p$ and $q$ run within the family (which is disjoint from $s$ as we have just established), we can append $s$ with ${p}$ and ${q}$ respectively to get alternating paths witnessing $y \in \tilde{H}_n$ and $y \in \tilde{T}_n$ respectively. Thus $y\in B_n$, as required.
 \qed\end{proof}

Next we will prove that any vertex can belong to at most one family. We start with a simple lemma about concatenating alternating paths.

\begin{lemma}\label{lm:xyz} Let ${p}$ be an even alternating path from $x$ to $y$. Let ${q}$ be an odd \weaklyalternating\ path from $y$ to $z$.
Then there is an odd alternating path from $x$ to either $y$ or $z$ whose length is at most $\len{p} + \len{q}$. 
\end{lemma}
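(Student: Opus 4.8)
The plan is to prove this by induction on $\len p + \len q$, using the matching structure to untangle the concatenation. Form the walk $W := p\cdot q$ from $x$ to $z$. Since $x$ is $M$-unmatched and $p$, having even length, ends with a matched edge while $q$ starts with an unmatched edge, $W$ is $M$-alternating and has odd length $\len p + \len q$. If $W$ is a path, it is the desired $x$--$z$ alternating path; likewise, the degenerate cases are immediate -- if $\len p = 0$ then $x=y$ and $q$ is already an odd alternating path from $x$ to $z$, and if $x = z$ then the reverse of $q$ is an odd alternating path from $x$ to $y$. So assume $x\notin\{y,z\}$, $\len p\ge 2$, and $W$ is not a path.

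Since $p$ and $q$ are individually paths, any repeated vertex of $W$ lies on both, so we may pick the vertex $v$ of $q$ closest to $q_0 = y$ (other than $y$) lying on $p$; write $p = (p_0,\dots,p_{2m})$, $q = (q_0,\dots,q_s)$, and $v = p_k = q_t$ with $t\ge 1$ and $q_1,\dots,q_{t-1}\notin V(p)$. One checks that $1\le k\le 2m-1$ (the case $v=x$ forces $z=x$, already excluded, and $v=y$ is impossible since $q$ is a path). The key observation is that the edge $q_{t-1}q_t$ entering $v$ along $q$ is not an edge of $p$, because $q_{t-1}$ is off $p$ (or $q_{t-1}=y$, in which case $q_{t-1}q_t$ is unmatched whereas the only $p$-edge at $y$ is matched). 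Since $M$ is a matching, $v$ carries at most one matched edge; comparing $q_{t-1}q_t$ with the matched $p$-edge at $v$ therefore forces a congruence between $k$ and $t$, and moreover if $q_{t-1}q_t$ and $p_{k-1}p_k$ were both matched they would be equal, giving $q_{t-1}=p_{k-1}\in V(p)$, a contradiction. Hence exactly one of two situations arises. If $k\not\equiv t\pmod 2$, then $p[x,v]$ followed by the reverse of $q[y,v]$ is an $M$-alternating path from $x$ to $y$ (the failure of the congruence is precisely what makes it alternate at $v$), it is a genuine path since $q_1,\dots,q_{t-1}$ avoid $p$ and $y=p_{2m}\notin\{p_0,\dots,p_k\}$, and its length $k+t$ is odd and strictly below $\len p + \len q$; we are done. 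If $k\equiv t\pmod 2$, which by the above forces $k$ and $t$ both odd, then the shortcut $W' := p[x,v]$ followed by $q[v,z]$ is an $M$-alternating walk from $x$ to $z$ of odd length $k+(s-t) < \len p + \len q$, and it is a concatenation of an $M$-alternating path from $x$ with a compatible \weaklyalternating\ path; the induction hypothesis, stated symmetrically enough to allow both parities of the two pieces, applies to $W'$ and returns an odd $M$-alternating path from $x$ to $v$ or to $z$. If it ends at $z$ we are done, and if it ends at $v$ we splice it with $q[v,z]$ (iterating the shortcut step if new self-intersections appear, which strictly decreases the length each time) to reach $z$.

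I expect the main obstacle to be exactly this last bookkeeping in the case $k\equiv t$: converting a recursively produced path that lands on the \emph{intermediate} vertex $v$ into one landing on $y$ or $z$, while keeping the length below $\len p + \len q$ and avoiding self-intersections. This is the precise point where the presence of odd cycles prevents free concatenation, so it will require carefully tracking which endpoint the recursion lands on, using that the shortcut $W'$ lives inside $V(p[x,v])\cup V(q[v,z])$, and arguing that the successive shortcuts terminate. Setting up the symmetric inductive statement (an $M$-alternating path from $x$ followed by an arbitrary path, concluding with an alternating path to the intermediate or terminal vertex of the same length-parity) so that all these reductions feed back into it cleanly is the bulk of the work.
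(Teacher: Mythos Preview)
Your approach picks the first vertex of $q$ (after $y$) that lies on $p$, and this is precisely what creates the difficulty you flag at the end. When $k$ and $t$ are both odd, only one of your two candidate concatenations is a genuine path, and it is the non-alternating one; the alternating candidate $p[x,v]\cdot q[v,z]$ may revisit $p$, forcing a recursion whose output can land on the intermediate vertex $v$ rather than on $y$ or $z$. Splicing that output with $q[v,z]$ gives a walk of length up to $|r|+(|q|-t)\le k+2(|q|-t)$, which need not be bounded by $\len p+\len q$, so the ``successive shortcuts'' you propose do not obviously terminate within the required length budget. You correctly identify this as the main obstacle, but you do not resolve it, and I do not see a clean symmetric inductive statement that makes it work.

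The paper sidesteps all of this with a single change: take $i$ to be the smallest index such that $p_i$ lies on $q$ (i.e.\ look for the first intersection from the $p$ side, not the $q$ side). Then $p_0,\dots,p_{i-1}$ avoid $q$ entirely, so \emph{both} concatenations $p_0,\dots,p_i=q_j,q_{j+1},\dots,z$ and $p_0,\dots,p_i=q_j,q_{j-1},\dots,y$ are honest paths. Each ends in an unmatched edge, and a parity check shows exactly one of them alternates at the splice point $p_i$; that one has odd length at most $\len p+\len q$. No induction, no iteration, no intermediate vertex to worry about. The asymmetry between $p$ and $q$ (only $p$ starts at an unmatched vertex) is what makes scanning from the $p$ side the right choice.
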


\begin{proof} Note that $p$ ends with a matched edge and $q$ starts with an unmatched edge. If the concatenation of ${p}$ and ${q}$ is a path, then we are done. Otherwise let $i$ be the smallest index such that $p_i \in {q}$. Let $p_i = q_j$. Then $p_0,p_1,\dots, p_i = q_j, q_{j+1}, \dots, z$ is a path from $x$ to $z$ and $p_0,p_1,\dots,p_i=q_j, q_{j-1},\dots q_0$ is a path from $x$ to $y$. Both have length at most $\len{p}+\len{q}$, both of them end with non-matched edges and one of them is clearly alternating. 
 \qed\end{proof}

\begin{claim}\label{disjointfamilies} Two families cannot intersect.
\end{claim}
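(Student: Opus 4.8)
The plan is to argue by contradiction. Suppose $x_1\neq x_2$ are both \tough\ at time $n$ and that some vertex $y$ lies in $F_n(x_1)\cap F_n(x_2)$. By Claim~\ref{cl:subsetB} we have $y\in B_n$, and by Definitions~\ref{descendent_def} and~\ref{family_def} there are, for each $i\in\{1,2\}$, an odd \weaklyalternating\ path $p_i$ and an even \weaklyalternating\ path $q_i$, both from $x_i$ to $y$, both contained in $F_n(x_i)\cup\{x_i\}$, with $\len{p_i}+\len{q_i}\le 2a(x_i)+1$. For each $i$, also fix a shortest alternating path $s_i$ from $U$ to $x_i$ witnessing $x_i\in T_n\cup U$, and put $k_i:=\len{s_i}/2$; exactly as in the proof of Claim~\ref{cl:subsetB}, $s_i$ is disjoint from $F_n(x_i)$ and $a(x_i)\le n-k_i$, so $2k_i+\bigl(2a(x_i)+1\bigr)\le 2n+1$.

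The idea is to use the shared vertex $y$ to splice the two families together and produce a short odd alternating path from $U$ ending at $x_1$ or $x_2$, which would place that vertex in $\tilde{H}_{n+1}$ and contradict Definition~\ref{tough_def}. One first checks that splicing is legal at $y$ only for certain parities: $p_1$ ends with an unmatched edge at $y$ while $q_2$ traversed backwards begins with a matched edge at $y$, so the walk that follows $s_1$, then $p_1$, then $q_2$ in reverse runs from a vertex of $U$ to $x_2$, alternates matched and unmatched edges at every internal vertex, begins and ends with an unmatched edge, and has odd length. Since $s_1$ avoids $F_n(x_1)$, the only possible self-intersections of this walk occur inside $F_n(x_1)\cap F_n(x_2)$, near $y$; splitting it at $y$ and applying Lemma~\ref{lm:xyz} (to the even alternating path $s_1\cdot q_1$ and the odd \weaklyalternating\ path obtained by reversing $p_2$) extracts an honest odd alternating path from $U$ to $y$ or to $x_2$, and the symmetric construction with $x_1,x_2$ interchanged yields one from $U$ to $y$ or to $x_1$.

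The main obstacle is the length bookkeeping. Routing ``through $y$'' costs $\len{p_1}+\len{q_2}$, which a priori is of order $a(x_1)+a(x_2)$ and hence too large to conclude; one genuinely needs to charge the whole detour to a \emph{single} family's budget. The way I would do this is to assume without loss of generality that $s_1$ is no longer than $s_2$ (so $x_1$ has the larger budget $2a(x_1)+1\le 2(n-k_1)+1$) and then prove that in fact $x_2\in F_n(x_1)\cup\{x_1\}$: one verifies that $F_n(x_1)\cup F_n(x_2)\cup\{x_2\}$ still has the descendant property with respect to $x_1$, supplying the required odd and even \weaklyalternating\ paths from $x_1$ to each new vertex by concatenating the $p_i,q_i$ at $y$ (tracking parities as above, and invoking Lemma~\ref{lm:xyz} whenever a walk must be straightened to a path), using $a(x_2)\le a(x_1)$ to fit the bound $2a(x_1)+1$. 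Maximality of $F_n(x_1)$ then forces $x_2\in F_n(x_1)$, whence there is an odd \weaklyalternating\ path from $x_1$ to $x_2$ inside $F_n(x_1)$ of length at most $2a(x_1)+1-\len{q}\le 2a(x_1)-1$ (with $\len{q}\ge 2$ the accompanying even path), and prepending $s_1$ gives an alternating path from $U$ to $x_2$ of odd length at most $2k_1+2a(x_1)-1\le 2n-1$, witnessing $x_2\in\tilde{H}_n\subseteq\tilde{H}_{n+1}$ and contradicting Definition~\ref{tough_def}.

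The delicate part is precisely the verification of the descendant property for the enlarged set: the parity of every spliced path at $y$ must be checked, the containment of all spliced paths in $F_n(x_1)\cup F_n(x_2)\cup\{x_2\}$ used, and the disjointness of $s_1$ from $F_n(x_1)$ from Claim~\ref{cl:subsetB} combined with Lemma~\ref{lm:xyz} to turn the resulting walks into genuine \weaklyalternating\ paths of the correct lengths. I expect no further conceptual difficulty once the absorption $x_2\in F_n(x_1)$ is established, since the final contradiction then reuses verbatim the short-witness argument already employed in the proof of Claim~\ref{cl:subsetB}.
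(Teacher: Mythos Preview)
Your absorption step fails outright. The descendant property for $x_2$ with respect to $x_1$ would require, in particular, an \emph{even} \weaklyalternating\ path from $x_1$ to $x_2$ contained in $F_n(x_1)\cup F_n(x_2)\cup\{x_2\}$. Such a path must end with a matched edge at $x_2$, so its penultimate vertex is the $M$-partner of $x_2$. If $x_2\in U$ there is no such partner; if $x_2\in T_n$ then by Proposition~\ref{prop:prop2} its $M$-partner lies in $H_n$, whereas $F_n(x_1)\cup F_n(x_2)\subseteq B_n$ by Claim~\ref{cl:subsetB}. Either way the required even path cannot stay inside your enlarged set, so $x_2$ can never be absorbed into $F_n(x_1)$. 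The length bookkeeping is also broken independently: any odd/even pair of paths from $x_1$ to $x_2$ spliced at $y$ would have combined length of order $(2a(x_1)+1)+(2a(x_2)+1)$, not $2a(x_1)+1$, and knowing $a(x_2)\le a(x_1)$ does not close that gap. Finally, your invocations of Lemma~\ref{lm:xyz} are off-target: its first argument must be a genuine alternating path (starting at an unmatched vertex), and it only returns a path to \emph{one of two} endpoints, the alternative here being $y\in B_n$, which yields no contradiction.

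The paper does not attempt absorption. Writing $F=F_n(x_1)$ and $G=F_n(x_2)$, it picks a \emph{shortest} \weaklyalternating\ path $p'$ from $x_1$ into $F\cap G$ running inside $F$, and similarly $q'$ for $x_2$ inside $G$. Minimality of $p'$ forces it to meet $G$ only at its endpoint $x'$, so concatenating with whichever of the two descendant paths $s,t$ from $x'$ to $x_2$ inside $G$ has the right parity at $x'$ produces an honest odd \weaklyalternating\ path $p'\cup s$ from $x_1$ to $x_2$ with no walk-straightening needed. Lemma~\ref{lm:xyz} is then applied exactly once, to the even alternating path $p$ from $U$ to $x_1$ together with $p'\cup s$, and either outcome (a short odd alternating path from $U$ to $x_1$ or to $x_2$) contradicts toughness. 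The length bound works because minimality of $q'$ gives $\len{q'}\le\len{t}$ (the reverse of $t$ is a competitor for $q'$), whence, assuming without loss of generality $\len{p}+\len{p'}\le\len{q}+\len{q'}$,
\[
\len{p}+\len{p'}+\len{s}\ \le\ \len{q}+\len{q'}+\len{s}\ \le\ \len{q}+\len{t}+\len{s}\ \le\ \len{q}+2a(x_2)+1\ =\ 2n+1.
\]
Charging the whole detour to a \emph{single} budget via this shortest-path comparison is precisely the device your sketch is missing.
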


\begin{proof} Let $x, y \in \TT_n$ be two \tough\ vertices. Suppose that their families $F:=F_n(x)$ and $G:=F_n(y)$ do intersect. Let ${p}$ and ${q}$ be shortest alternating paths witnessing $x, y \in T_n \cup U$. Let us choose a shortest path among all \weaklyalternating\ paths from $x$ to $F \cap G$ that run within~$F$. Let this path be ${p'}$ and its endpoint be $x' \in F \cap G$. Do the same with $y$ to get a path ${q'}$ from $y$ to $y' \in F \cap G$ lying within $G$. By symmetry we may assume that $\len{p} + \len{p'} \leq \len{q} + \len{q'}$. 

By the choice of ${p'}$ we see that the only point on ${p'}$ that is in $G$ is its endpoint~$x'$. 
From $x'$ there are two paths, ${s}$ and ${t}$,  leading to $y$ within $G$ by Definition~\ref{descendent_def} one of which, say ${s}$, can be appended to ${p'}$ to get a \weaklyalternating\ path from $x$ to $y$. 

Now we are in the position to apply the previous lemma. The path ${p}$ leads from $p_0$ to $x$ and ends with a matching edge. The path ${p'} \cup {s}$ leads from $x$ to $y$ and starts and ends with non-matching edges. Thus by the lemma, there is an alternating path from $p_0$ to either $x$ or $y$ which ends with a non-matching edge. The length of this alternating path is at most $\len{p} + \len{p'} + \len{s}$. 
But by the choice of ${p'}$, the choice of ${q'}$, and by the age requirement in Definition~\ref{descendent_def} we have 
\begin{eqnarray*} 
 \len{p} + \len{p'} + \len{s} &\leq& \len{q} + \len{q'} + \len{s}\\
 &\leq& \len{q} + \len{t} + \len{s} \ \leq\ \len{q} + 2a(y) + 1\ =\ 2n+1.
 \end{eqnarray*}
Thus the alternating path from $p_0$ to $x$ or $y$ that we have found has length at most $2n+1$ and it witnesses $x \in \tilde{H}_{n+1}$ or $y \in \tilde{H}_{n+1}$. But neither is possible since both $x$ and $y$ are \tough, which is a contradiction.
 \qed\end{proof}

\begin{claim}\label{cl:onlyonetough} There is exactly one \tough\ vertex adjacent to any family.
\end{claim}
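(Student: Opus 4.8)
The plan is to identify the owner $x$ of a family $F_n(x)$ as the unique \tough\ vertex adjacent to it, using only the three preceding claims and a little bookkeeping.

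For existence, since $x\in\TT_n$, Definition~\ref{tough_def} provides a vertex $y\in B_n$ with $xy\in E$. Then $xy$ is by definition a \tough\ edge (it has $x\in T_n\cup U$, $y\in B_n$, and $x$ \tough), so Claim~\ref{claim:tough_edge} gives $y\in F_n(x)$. Thus $x$ is adjacent to its own family, and every family therefore has at least one \tough\ vertex adjacent to it. (Note also that by Claim~\ref{disjointfamilies} and Claim~\ref{claim:tough_edge}, distinct \tough\ vertices have disjoint nonempty families, so the owner of a given family is unambiguous.)

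For uniqueness, suppose $z\in\TT_n$ is also adjacent to $F_n(x)$, say via an edge $zw$ with $w\in F_n(x)$. The key observation is that $zw$ is automatically a \tough\ edge: we have $z\in T_n\cup U$ since $\TT_n\subseteq T_n\cup U$, we have $w\in B_n$ by Claim~\ref{cl:subsetB} (every family lies inside $B_n$), and $z$ is a \tough\ vertex by assumption. Applying Claim~\ref{claim:tough_edge} to the \tough\ edge $zw$ yields $w\in F_n(z)$. Hence $w\in F_n(x)\cap F_n(z)$, and since families of distinct \tough\ vertices are disjoint (Claim~\ref{disjointfamilies}), we must have $z=x$.

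I do not expect a genuine obstacle here: the statement is a short combination of Claims~\ref{claim:tough_edge}, \ref{cl:subsetB} and~\ref{disjointfamilies}. The only point requiring a little care is the observation that a \tough\ vertex adjacent to a family always sits at one end of a \tough\ edge — that is precisely what makes Claim~\ref{claim:tough_edge} applicable and forces the two families to share the vertex $w$.
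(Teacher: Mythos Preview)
Your proof is correct and follows essentially the same route as the paper: use Claim~\ref{cl:subsetB} to see that the shared vertex lies in $B_n$, apply Claim~\ref{claim:tough_edge} to place it in the second family, and invoke Claim~\ref{disjointfamilies} to conclude. The only difference is that you spell out the existence part explicitly (the owner $x$ is adjacent to its own family via a \tough\ edge), which the paper leaves implicit.
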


\begin{proof} Let $x,y \in \TT_n$ and $z \in F_n(x)$. Suppose there is an edge between $y$ and $z$.
The vertex $z$ is in $B_n$ by Claim~\ref{cl:subsetB}.  Hence $\{y,z\}$ is a \tough\ edge and $z$ is in the family of $y$ by Claim~\ref{claim:tough_edge}. But then the two families would not be disjoint, which is a contradiction to Claim~\ref{disjointfamilies}.
 \qed\end{proof}

Define $\cc:=2d/c_0$. (Recall that $c_0=d^{-r'}$.)  Roughly speaking, the following claim states that if a vertex remains \tough\ for an extended period of time, then its family, if it is not large already, consumes its neighbors.

\begin{claim}\label{expanding_claim}  Suppose that
$2(n+\cc)+1\le n_0$, $x\in\TT_n$,
 $|F_n(x)| < \cc$, $v \in F_n(x)$ and there is an edge $\{v,w\}$ such that $w \in B_n \setminus F_n(x)$.
If  $x \in\TT_{n+\cc}$, then $w \in F_{n+\cc}(x)$.
\end{claim}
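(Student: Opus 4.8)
The plan is to show that if $x$ remains \tough\ for the next $\cc$ steps, then $w$ must eventually enter the family. The key structural fact we can exploit is that $v\in F_n(x)$, so there are two \weaklyalternating\ paths $p$ (odd) and $q$ (even) from $x$ to $v$ inside $F_n(x)\cup\{x\}$ with $\len p+\len q\le 2a(x)+1$; moreover $w\in B_n$ means $w$ is reachable by both an odd alternating path and an even alternating path from $U$, hence from $x$ (after prepending a short alternating path witnessing $x\in T_n\cup U$). The idea is to combine the edge $vw$ with $p$ or $q$ to get two \weaklyalternating\ paths from $x$ to $w$ of opposite parities, whose total length is at most $2a(x)+3$; and since $x$ stays \tough, by Claim~\ref{cl:subsetB} its age keeps growing so that after $\cc$ steps the age bound $2a(x)+1$ at time $n+\cc$ comfortably accommodates this length.

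\textbf{Main steps.} First I would record the elementary observation that once $x$ is \tough\ at every time in $\{n,n+1,\dots,n+\cc\}$, the family only grows: $F_n(x)\subseteq F_{n+1}(x)\subseteq\dots\subseteq F_{n+\cc}(x)$ (this follows from $X_n\subseteq X_{n+1}$, $a(x)$ being non-decreasing in time, and the descendant property being monotone). In particular $v\in F_{n+\cc}(x)$ and $w$ is still not in $F_n(x)$ only if it has not been absorbed yet. Second, I would analyze the age: since $x$ stays \tough, by the proof of Claim~\ref{cl:subsetB} a shortest alternating path $s$ witnessing $x\in T_{n'}\cup U$ has length at most $2n'-2k$ where $k$ is fixed by the first time $x\in T$; thus $a(x)$ at time $n+j$ is at least $j$ plus its value at time $n$, and certainly $a(x)\ge \cc$ at time $n+\cc$. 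Third, the core argument: take the odd path $p$ and even path $q$ from $x$ to $v$ inside $F_n(x)$; appending the edge $vw$ to whichever of $p,q$ does not already contain $w$ — and here one uses that $w\notin F_n(x)\supseteq$ (interior of $p$ and $q$), so $w$ lies on neither except possibly as... actually $w\notin F_n(x)$ so $w$ is on neither $p$ nor $q$ — gives \weaklyalternating\ paths $p'=p\cup vw$ (even) and $q'=q\cup vw$ (odd) from $x$ to $w$, with $\len{p'}+\len{q'}=\len p+\len q+2\le 2a(x)+3\le 2a_{n+\cc}(x)+1$ once $a$ has grown by at least $1$. Both run within $F_n(x)\cup\{w\}\cup\{x\}\subseteq X_{n+\cc}\setminus\{x\}\cup\{x\}$. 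Fourth, I would check the length budget $2(n+\cc)+1\le n_0$ ensures all these paths are short enough that the preceding claims in Section~\ref{combsec} apply at time $n+\cc$, and conclude that $\{w\}\cup F_n(x)$ has the descendant property with respect to $x$ at time $n+\cc$, hence $w\in F_{n+\cc}(x)$.

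\textbf{Main obstacle.} The delicate point is the parity and disjointness bookkeeping in step three: I need the two new paths to $w$ to have opposite parities and to stay inside an admissible set, and I must be careful that appending $vw$ to $p$ (respectively $q$) really produces a \emph{path} and not a self-intersecting walk. Since $w\notin F_n(x)\cup\{x\}$, the vertex $w$ appears on neither $p$ nor $q$, so each appended walk is genuinely a path; the parity is automatic since $p$ is odd and $q$ is even and we add one edge. The second subtlety is verifying the age has strictly increased by the time $n+\cc$ — this is where I need $x$ to be \tough\ at all intermediate times, not just at $n$ and $n+\cc$; but Definition~\ref{tough_def} together with the monotonicity $T_k\subseteq \tilde T_{k'}$ for $k\le k'$ lets me argue that if $x\notin\TT_{m}$ for some $n<m<n+\cc$ then $x\in\tilde H_{m+1}$, and then $x\in B_{m+1}\subseteq B_{n+\cc}$, so $x$ is no longer in $T_{n+\cc}\cup U$ and the hypothesis $x\in\TT_{n+\cc}$ fails — giving the needed intermediate toughness by contradiction. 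The remaining bound $\cc = 2d/c_0$ is more than enough slack for all the length inequalities; the real content is purely the path surgery.
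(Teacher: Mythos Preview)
Your path-surgery step fails at the alternation check, and this is not a technicality but the heart of the difficulty. Since $v\in F_n(x)\subseteq B_n$, the family consists of matched pairs (see the first paragraph of the proof of Claim~\ref{cl:subsetB}), so the $M$-partner of $v$ lies inside $F_n(x)$; as $w\notin F_n(x)$, the edge $vw$ is \emph{unmatched}. Now your odd path $p$ from $x$ to $v$ ends in an unmatched edge, so appending the unmatched edge $vw$ produces two consecutive non-matching edges: $p\cup vw$ is \emph{not} \weaklyalternating. Only the even path $q$ (which ends in a matched edge) can be legitimately extended by $vw$, yielding a single \emph{odd} \weaklyalternating\ path from $x$ to $w$. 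You are therefore missing the even \weaklyalternating\ path from $x$ to $w$, and without it the set $F_n(x)\cup\{w\}$ does not satisfy the descendant property.

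This is exactly the obstacle the paper's proof is designed to overcome. The paper does not try to reach $w$ by appending $vw$ twice. Instead, it first shows (by contradiction with $x\in\TT_{n+\cc}$) that every even alternating path from $U$ to $w$ of length $\le 2n$ must pass through $x$; taking a shortest such path and letting $p_2$ be its tail from the last vertex in $F_n(x)\cup\{x\}$ to $w$, one gets $\len{p_2}\le 2a(x)$. The candidate set is then $F_n(x)\cup p_2$, and for each $y\in p_2$ the two required \weaklyalternating\ $x$-$y$ paths are: (i) follow the original path from $x$ along $p_2$; and (ii) take an even \weaklyalternating\ path from $x$ to $v$ inside $F_n(x)$ (length $<\cc$ since $|F_n(x)|<\cc$), then the edge $vw$, then walk backwards along $p_2$. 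These have opposite parities by construction and total length at most $2(a(x)+\cc)+1$, which matches the age of $x$ at time $n+\cc$. The smallness hypothesis $|F_n(x)|<\cc$ is used precisely here, to bound the length of the detour through $v$; in your argument it plays no role, which is another sign that a key idea is missing.
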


\begin{proof} By definition, $x \not \in \tilde{H}_{n+\cc+1}$ as  $x$ is still \tough\ at the moment $n+\cc$.

First suppose that there is an even alternating path $p$ with $\len{p} \leq 2n$ that ends in $w$ and does not pass through $x$. Let $w' \in p$ be the first even vertex on the path that is adjacent to some vertex $v' \in F_n(x)$. Then the initial segment of $p$ up until $w'$ has to be disjoint from $F_n(x)$. By definition, in $F_n(x)$ there has to be a \weaklyalternating\ path from $x$ to $v'$ that ends in a matched edge. Extending this path through $w'$ and then the initial segment of $p$, we get an alternating path from $U$ to $x$. Its length is obviously at most $\len{p} + \cc$, hence $x \in \tilde{H}_{n+(\cc+1)/2}$ and consequently in $\tilde{H}_{n+\cc}$, which is a contradiction.

 This means that every even alternating path from $U$ to $w$ of length at most  $2n$ has to pass through $x$. 
Let $p$ be a shortest such path. Let $v'$ be the last vertex of $p$ that is in $F_n(x) \cup \{x\}$. The vertex $v'$ divides $p$ into two segments, $p_1$ going from $U$ to $v'$ and $p_2$ from $v'$
to $w$. Then 
 $$
 \len{p_2} = \len{p} -\len{p_1} \le 2n - 2 \min \{ k\ge 0 : x \in T_k \cup U\}
= 2a(x).
 $$ 

We claim that $p_2$ becomes part of the family at time $n+\cc$.
For any vertex $y \in p_2$ we can either go from $x$ to $y$ along $p$, or go from $x$ to $v$ in the even number of steps, then to $w$ and continue backwards on $p_2$ to $y$. The total length of these two paths is at most  $\cc + \len{p_2} + 1 + \cc \leq 2(a(x)+\cc) + 1$.  Since at moment $n + \cc$ the age of $x$ will be exactly $a(x)+\cc$, the set $F_n(x) \cup p_2$ will satisfy the descendant property, so this whole set, including $w$, will be a part of $F_{n+\cc}(x)$.
\qed\end{proof}

\begin{definition}\label{expanding_def} We will say that at moment $n$ the family of the vertex $x \in \TT_n$ is \textit{expanding} if  there is an edge $\{v,w\}$ such that $v \in F_n(x)$ and $w \in B_n \setminus F_n(x)$.
For any $x \in V$, let $e_n(x)$ be the number of moments $m < n$ such that $x\in \TT_m$, $0 < |F_m(x)| < \cc$ and at moment $m$ the family of $x$ was expanding. 
\end{definition}

\begin{claim}\label{cl:bounded_en_corol} For any $x\in V$ and $n\le (n_0-1)/2$, we have $e_n(x) \leq \cc^2$. 
\end{claim}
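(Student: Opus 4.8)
The plan is to track how the size $|F_m(x)|$ of the family evolves with the time parameter $m$, show that it is non-decreasing as long as $x$ stays \tough, and then invoke Claim~\ref{expanding_claim} to see that it must strictly increase across any ``old enough'' expanding moment. Since $|F_m(x)|$ stays in $\{1,\dots,\cc-1\}$ for the moments we count, it can increase only $\cc-2$ times, and each constant stretch can absorb only $\cc$ expanding moments; this gives the bound $\cc^2$.

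First I would record two monotonicity facts. Because $X_m$, $\tilde H_m$, $\tilde T_m$ and $B_m$ are all non-decreasing in $m$ (Proposition~\ref{prop:prop1} and~\ref{prop:prop2}), the set of times $m$ at which $x$ is \tough\ in the sense of Definition~\ref{tough_def} (that is, $x\in T_m\cup U$, $x$ is adjacent to $B_m$, and $x\notin\tilde H_{m+1}$) is an interval of integers: ``$x\in T_m\cup U$'' holds on an interval, ``$x$ adjacent to $B_m$'' holds on a terminal segment, and ``$x\notin\tilde H_{m+1}$'' holds on an initial segment. Moreover, on this \tough\ interval $|F_m(x)|$ is non-decreasing: by Definition~\ref{age_def} the age $a(x)$ only grows with $m$ while $x$ stays \tough, and $X_m\setminus\{x\}$ only grows, so any set that has the descendant property with respect to $x$ at time $m$ (Definition~\ref{descendent_def}) still has it at time $m+1$; hence $F_m(x)\subseteq F_{m+1}(x)$ by Definition~\ref{family_def}. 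Finally, by Claim~\ref{claim:tough_edge} we have $|F_m(x)|\ge1$ whenever $x\in\TT_m$, so the requirement ``$0<|F_m(x)|$'' in Definition~\ref{expanding_def} is automatic once $x$ is \tough.

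Now I would split the \tough\ interval into the maximal subintervals $[a_k,b_k]$ on which $|F_m(x)|$ equals a fixed value $k$; by the monotonicity just noted there is at most one such subinterval for each $k$, hence at most $\cc-1$ of them with $1\le k\le\cc-1$. Let $m<n$ be an expanding moment counted by $e_n(x)$, so $x\in\TT_m$, $1\le|F_m(x)|<\cc$ and (by Definition~\ref{expanding_def}) there is an edge $vw$ with $v\in F_m(x)$ and $w\in B_m\setminus F_m(x)$; say $m$ lies in the subinterval $[a_k,b_k]$ with $k=|F_m(x)|<\cc$. I claim that if in addition $m\le n-1-\cc$, then $m>b_k-\cc$. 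Indeed, suppose $m+\cc\le b_k$. Then $m+\cc$ still lies in the \tough\ interval, so $x\in\TT_{m+\cc}$, and $2(m+\cc)+1\le 2(n-1)+1=2n-1\le n_0-2\le n_0$ because $n\le(n_0-1)/2$; thus Claim~\ref{expanding_claim} applies at time $m$ and yields $w\in F_{m+\cc}(x)$, so $|F_{m+\cc}(x)|\ge k+1$, contradicting $m+\cc\in[a_k,b_k]$. Hence every expanding moment $m<n$ with $m\le n-1-\cc$ lies in the window $(b_k-\cc,b_k]$ of its subinterval, so each of the at most $\cc-1$ relevant subintervals contributes at most $\cc$ such moments, for a total of at most $(\cc-1)\cc$. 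The remaining expanding moments satisfy $m>n-1-\cc$, i.e.\ $m\in\{n-\cc,\dots,n-1\}$, so there are at most $\cc$ of them. Adding up, $e_n(x)\le(\cc-1)\cc+\cc=\cc^2$, as required.

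The step needing the most care is verifying the hypothesis $2(m+\cc)+1\le n_0$ of Claim~\ref{expanding_claim} at each moment $m$ where we invoke it: this is exactly why we separate off the last $\cc$ moments before $n$ and apply the claim only when $m+\cc\le n-1$, which together with $n\le(n_0-1)/2$ keeps $2(m+\cc)+1$ below $n_0$. The other point to get right is that the ``constant family value'' subintervals $[a_k,b_k]$ are genuinely intervals, which relies on the monotonicity $F_m(x)\subseteq F_{m+1}(x)$ established above.
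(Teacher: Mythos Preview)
Your proof is correct and follows essentially the same approach as the paper: bound, for each possible family size $k<\cc$, the number of expanding moments with $|F_m(x)|=k$ by $\cc$ via Claim~\ref{expanding_claim}, and sum over the at most $\cc$ sizes. Your argument is in fact more careful than the paper's, since you explicitly verify the hypothesis $2(m+\cc)+1\le n_0$ of Claim~\ref{expanding_claim} (by separating off the last $\cc$ moments before $n$) and spell out the monotonicity of the tough interval and of $F_m(x)$; the paper's proof leaves these points implicit.
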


\begin{proof} By Claim~\ref{expanding_claim} we know that the number of moments in which an expanding family has a fixed size $k < \cc$ is at most $\cc$. This is because, within $\cc$ steps after the first such moment,  the family either ceases to exist (as the vertex $x$ is not \tough\ anymore) or strictly grows. Thus for each possible size $k$ there are at most $\cc$ moments of expansion, and thus there are at most $\cc^2$ such moments in all.
\qed\end{proof}

\subsection{Invariants of growth}\label{largeoutside_section}

Now we are ready to finish the proof of Theorem~\ref{shortalternating_theorem} for graphings without stumps. Let all the previous definitions  and results apply (except those from Section~\ref{NoOddCycles}, obviously). We restrict ourselves to those $n$ that are at most $(n_0-2)/2$.

As we have seen, a fairly short computation was enough to show that $\muN{X_n}$ grows exponentially when we had no odd cycles. 
In the general case,  we need to use a more complicated invariant than $\muN{X_n}$ as a measure of growth. Namely, we consider
\[
I(n) := \muN{X_n} + \muN{B_n} + \frac{1}{2}\int_{X_n} e_n(x) \dd x.
\]

Recall that $\TT_n\subseteq T_n \cup U$ denotes the set of \tough\ vertices. Let $\TM_n:=(T_n\cup U)\setminus \TT_n$ be the set of non-\tough\ vertices within $T_n \cup U$. The \tough\ vertices in $\TT_n$
are further classified according to their families. Namely, $\TB_n$ denotes those \tough\ vertices whose families have size at least $\cc$ (are ``large'').  Of \tough\ vertices with smaller families, $\TE_n$ 
contains those that have expanding families and $\TG_n:=\TT_n\setminus(\TB_n\cup \TE_n)$ contains the rest. Thus we have the following partitions (see Figure~\ref{fig1}):
 \begin{eqnarray*}
  T_n \cup U &=& \TM_n \cup \TT_n,\\ 
  \TT_n &=&  \TB_n \cup \TE_n \cup \TG_n.
 \end{eqnarray*} 
\begin{figure}[t]
\begin{center}\includegraphics[height=4cm]{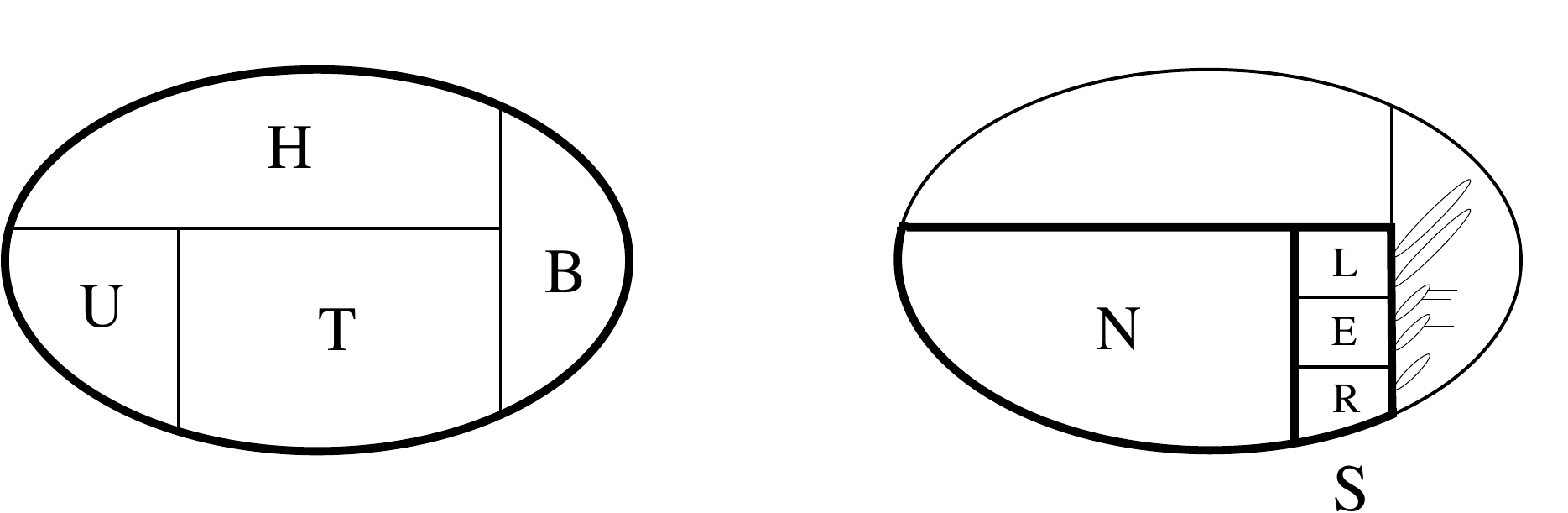}\end{center}
\caption{The structure of the set $X$.}\label{fig1}
\end{figure}
We shall often omit the index $n$ from our notation, except where this may lead to confusion. 

Consider a \tough\ vertex $x \in \TG$ (whose family is small and not expanding). 
Note that $\{x\} \cup F(x)$ consists of at least two vertices
by Claim~\ref{claim:tough_edge}, has at most one vertex of degree $d+1$ 
(since $\{x\}\cup F(x)$ spans a connected subgraph with at most $\cc<r_2$ vertices)
and each its vertex has degree at least $d$ (as it belongs to
$\tilde{T}_n \cup U$). 
Since $\GG$ has no stumps, the number of edges leaving 
$\{x\} \cup F(x)$ is at least $d$. So, if $k$ of these are adjacent to $F(x)$, then at least $d-k$ are adjacent
to $x$ and we have $|E(x,F(x))|\le \deg(x)-(d-k)=k+\I 1_J(x)$, where $\I 1_J$ is the characteristic function of $J$.
The set $F(x)$ cannot send any edges to $B\setminus F(x)$ because it a non-expanding family
nor any edges to $\TT\setminus\{x\}$  by Claim~\ref{cl:onlyonetough}. Hence the edges from
$F(x)$ have to go to $H$, $\TM$ or the outside world $O = X^c$. This gives the following edge count: 
\begin{equation}\label{eq:xFx} |E(F(x), \TG)| = |E(F(x),x)| \leq |E(F(x), H \cup \TM \cup O)| + \I 1_J(x).\end{equation}
 By Claim~\ref{claim:tough_edge} we see that any edge between $\TG$ and $B$ has to run between a 
vertex in $\TG$ and a member of its family. Thus, by integrating~(\ref{eq:xFx}) over $x\in \TG$ and using that families are pairwise disjoint subsets in $B$, we get that
\[ \muCNe{B}{\TG} \leq \muCNe{B}{H \cup \TM \cup O} + \muN{J\cap \TG}. \]
  (Recall that $\muCNe{X}{Y}$, as defined in (\ref{eq:muCe}), denotes the measure of edges between 
$X,Y\subseteq V$.)

We bound the number of edges between any other \tough\ vertex $x\in \TB\cup \TE$ and $B$ by the trivial bound $d$. (Note that if $\deg(x)=d+1$ then $x\in J$ is covered by the
current matching $M$, so at least one
edge at $x$ does not go to $B$.) Adding this to the previous equation, we get
\begin{equation} \muCNe{B}{\TT}  \leq d\,\muN{\TB} + d\,\muN{\TE} + \muCNe{B}{H \cup \TM  \cup O} + \muN{J\cap \TG}.
  \label{TT_eq} \end{equation}

We have that $\muN{T} = \muN{H}$ by Proposition~\ref{prop:prop1}.2 (namely, because $M$ gives
a bijection between these two sets). Also, all vertices in $T$ have degree at least $d$. Thus 
\[
\muCNe{H}{V} - \muN{H\cap J}\le d\,\muN{H}  = d\,\muN{T} = \muCNe{T}{V} - \muN{J \cap T}. 
\]
 Similarly as in Section~\ref{NoOddCycles}, if we choose $r_2 > 16/c_0$ then
to any vertex $x \in H \cap J$ we can associate a unique path of length $r_2/4$ that lies in $X$ and
conclude that  the measure of $H \cap J$ is at most $c_0\,\muN{X}/4$. Hence  
 \[
 \muCNe{H}{V}\le \muCNe{T\cup U}{V}  - \muN{J \cap T} + \frac{c_0}4\, \muN{X}.
 \] 
The edges between $T \cup U$ and $H$ contribute equally to the total degrees of these two sets. In the worst case there are no internal edges in $H$. This boils down to the following estimate:
\begin{multline*}
 \muCNe{H}{O} + \muCNe{B}{H} 
   \ \leq\ 2\,\muCNe{T\cup U}{T\cup U}+ \muCNe{T \cup U}{O}\\
   + \muCNe{B}{\TT} + \muCNe{B}{\TM} - \muN{J\cap T} + \frac{c_0}4\,\muN{X}.
\end{multline*}
Combining this with (\ref{TT_eq}) and subtracting $\muCNe{B}{H}$ from both sides, we get
\begin{multline*} 
\muCNe{H}{O} 
 \ \leq\ 2\,\muCNe{T\cup U}{T\cup U} + \muCNe{B \cup T \cup U}{O}  \\
  + 2\,\muCNe{B}{\TM} + d\,\muN{\TB} + d\,\muN{\TE} 
+ \muN{J\cap \TG} -  \muN{J\cap T} + \frac{c_0}4\, \muN{X}. \end{multline*}
Clearly $\muN{J\cap\TG} \leq \muN{J\cap T}$. Each vertex of $\TB$ has a family of size at least $\cc$, and these 
families
are contained in $B$ by Claim~\ref{cl:subsetB} and are
disjoint by Claim~\ref{disjointfamilies}. Thus we get that $\muN{\TB} \leq \muN{B}/\cc$. Using this and adding $\muCNe{B\cup T \cup U}{O}$ to both sides, we obtain that
\begin{multline}\label{XO_eq} 
 \muCNe{X}{O} 
 \ \leq\ 2\,\muCNe{T \cup U}{T\cup U} + 2\,\muCNe{B\cup T \cup U}{O} \\ 
 + 2\,\muCNe{B}{\TM}+ \frac{d}{\cc}\, \muN{B} + d\,\muN{\TE} + \frac{c_0}4\,\muN{X}.
\end{multline}
Any vertex in $O_n$ that is adjacent to $B_n \cup T_n \cup U$  is going to be in $X_{n+1}$, hence 
\[ \muCNe{B_n \cup T_n \cup U}{O_n} \leq d\,(\muN{X_{n+1}} - \muN{X_n}).\]
Since there is no augmenting path of length at most $2n+2$, any vertex in $\TM_n$ that is adjacent to an edge coming from $B_n$ will be a part of $B_{n+1}$. Likewise, by Lemma~\ref{TTedge_lemma}, any edge in $E(T_n\cup U,T_n\cup U)$ has to be adjacent to a point in $B_{n+1}\setminus B_n$. This implies that 
\[ 2\,\muCNe{T_n\cup U}{T_n \cup U} + 2\,\muCNe{B_n}{\TM_n}  \leq 2d\,(\muN{B_{n+1}} - \muN{B_n}).\]
Plugging all this into (\ref{XO_eq}) and dividing by $d$, we get
\begin{equation} 
 \frac{\muCNe{X_n}{O_n}}{d} 
 \ \leq\ 2\,(\muN{X_{n+1}} - \muN{X_n}) 
 + 2\,(\muN{B_{n+1}} - \muN{B_n}) + \muN{\TE_n} + \frac{\muN{B_n}}{\cc}+\frac{c_0\, \muN{X_n}}{4d}. \label{XO2_eq} 
 \end{equation}
By Definition~\ref{expanding_def},  we have that $e_{n+1}(x) = e_n(x) + 1$ for $x \in \TE_n$
while $e_{n+1}(x)=e_n(x)$ otherwise. Thus 
\[ 
 \int_{X_{n+1}} e_{n+1}(x)\dd x = \int_{X_n} e_n(x)\dd x + \muN{\TE_n}.
\]
Hence the right hand side of (\ref{XO2_eq}) is at most $2(I(n+1)-I(n)) + \muN{B_n}/\cc + c_0\,\muN{X_n}/(4d)$. Furthermore, by Lemmas~\ref{lemma:denseset} and~\ref{lemma:expansion} we have 
 \[ 
 \muCNe{X_n}{O_n} \geq  c_0\, \muN{X_n}, 
 \] 
 which implies that
\[
\frac{c_0\, \muN{X_n}}{2d} \leq I(n+1) - I(n) + \frac{\muN{B_n}}{2\cc} + \frac{c_0\,\muN{X_n}}{8d}.
\]

Recall that $\cc = 2d/c_0$. Since $\muN{B_n} \leq \muN{X_n}$, we get that
\[\frac{\muN{X_n}}{4\cc} \leq I(n+1) - I(n).\]
On the other hand, we know from Claim~\ref{cl:bounded_en_corol} that $e_n(x) \leq \cc^2$ for every
$x\in V$. 
 Thus $\int_{X_n} e_n(x) \dd x \leq \cc^2\,\muN{X_n}$ and
 \begin{equation}\label{eq:IX}
 I(n) \leq \left(2+ \frac{\cc^2}{2}\right)\,\muN{X_n} \leq  \cc^2\, \muN{X_n} 
 \leq 4\cc^3 \big(I(n+1) - I(n)\big).
 \end{equation}
 This gives that $\big(1+1/(4\cc^{3})\big) I(n) \le I(n+1)$. We conclude by induction
on $n$ that
\[ 
 \left(1+\frac{1} {4\cc^3}\right)^nI(0) \le I(n)\le \cc^2\,\muN{X_n}\le \cc^2,
 \]
 as long as there are no augmenting paths of length at most $2n+2$.
Since $X_0=U$, we have that $I(0)=\muN{U}$.
In particular, taking $n=\lfloor (n_0-2)/2\rfloor$, we obtain the desired exponential bound on $\muN{U}$.
This finishes the proof of Theorem~\ref{shortalternating_theorem}.


\section{Proof of Lemma~\ref{pre-colored}}\label{se:pre-colored}

Recall that at most $d$ leaves in a finite graph $G$ with $\Delta(G)\le d$
are pre-colored and and we have to show that this pre-coloring 
can be extended to the whole graph.

Assume that the colors on the pre-colored leaves form a subset of $[d]$. Let $L$ consist of
vertices of degree 1 whose (unique) incident edge is pre-colored. 
Let $Y$ consist of those vertices of $G$ that are incident to at least $\sqrt d$ pre-colored edges.  Clearly, $|Y|\le d /\sqrt d= \sqrt d$. Pick any set of $|Y|$ unused colors from 
$[s]$ and edge-color $G[Y]$ using these colors by Vizing's theorem, where 
$s:=\lfloor d+2\sqrt d\,\rfloor$.

Next, let us color, one by one, 
all uncolored edges that connect $Y$ to $Z:=V(G)\setminus (Y\cup L)$ by using colors from $[s]$ only. 
When we consider a new
edge connecting $y\in Y$ to $z\in Z$ then we have at most $d-1$ colors forbidden at $y$ and at most
$2\sqrt d-1$ colors forbidden at $z$. (Indeed, $z\not\in Y$ 
is incident to at most $\sqrt d$ pre-colored leaves and to at most $|Y|-1$ other colored edges.)
Thus the number of forbidden colors at $\{y,z\}$ is at most $s-1$, so we can extend our coloring to $\{y,z\}$
using some color from $[s]$.

Thus it remains to color the edges in $H:=G[Z]$, the subgraph induced by $Z$. 
By Vizing's theorem, we can find a proper edge-coloring $g:E(H)\to [d+1]$ of the graph $H$. Let $H_g$ 
be a subgraph of $H$ that consists of \emph{$g$-conflicting} edges, i.e.\ those edges inside $Z$ 
that are adjacent to another edge of $G$ of the same color. (Clearly, the latter edge must have the other vertex in~$L\cup Y$.)

We try to ``improve'' the coloring $g$ by composing it with a permutation
$\sigma:[d+1]\to [d+1]$, chosen uniformly at random.  Take a vertex $z\in Z$. There are at most $|Y|+\sqrt d\le 2\sqrt d$ edges in $E(G)\setminus E(H)$ incident to $z$ and
each of these is responsible for at most one conflicting edge at~$z$. Next, consider the random variable $X_z=X_z(\sigma)$ which is the
number of neighbors $x\in Z$ of $z$ such that $\sigma(g(xz))$ is equal to the color at some edge 
between $x$ and $L\cup Y$. In other words, $X_z(\sigma)$ counts the number of 
$H_{\sigma\circ g}$-edges at $z$ with a conflict at the other endpoint. As we 
argued
before,
each $x\in Z$ sends at most $2\sqrt d$ edges to $L\cup Y$. By the linearity of expectation
we have that
 \begin{equation}\label{eq:EXz}
 \mathbf{E}(X_z)\le \deg(z)\, \frac{2\sqrt d}{d+1}<2\sqrt d.
 \end{equation}

Note that $X_z$ changes at most by 2 if we transpose some two elements 
of~$\sigma$. Also, if $X_z(\sigma)\ge i$, then there are
$i$ values of $\sigma$ such that $X_z(\sigma')\ge i$ for every $\sigma'$ that coincides with $\sigma$ on these $i$ values. (Namely, fix the colors of some $i$ conflicting edges at $z$.) Thus all
assumptions of McDiarmid's concentration result~\cite[Theorem 1.1]{mcdiarmid:02} are satisfied (with $c=2$ and $r=1$
in his notation) and we conclude
that, for each $t\ge 0$, the probability of $X_z\ge m+t$ satisfies
 \begin{equation}\label{McD}
 \mathbf{Pr}(X_z\ge m+t)\le 2 \exp\left(-\frac{t^2}{64(m+t)}\right),
 \end{equation}
 where $m$ is the median of $X_z$. Since $X_z$ is non-negative, we have that $\mathbf{E}(X_z)\ge \frac12\, m$. Thus $m<4\sqrt d$ by \eqref{eq:EXz}.  Taking, for example, $t=0.5\sqrt d$ in (\ref{McD}) we obtain
 $$
 \mathbf{Pr}(X_z\ge 4.5\sqrt d)\le 2 \exp\left(-\frac{d/4}{64\cdot 4.5\sqrt d}\right)=\exp(-\Omega(\sqrt d)).
 $$
 The Union Bound shows  that there is $\sigma$ such that $X_z<4.5\sqrt d$ for every vertex $z\in Z$ at distance at most
$2$ from $L \cup Y$. (Note that there are at most $O(d^{5/2})$ such vertices $z$.)
Since all $(\sigma\circ g)$-conflicting edges have to be at distance at most 1 from $L \cup Y$, this permutation
$\sigma$ satisfies that  the $(\sigma\circ g)$-conflict graph $H_{\sigma\circ g}\subseteq H$ has maximum degree at most
$6.5\sqrt d$. Recolor $E(H_{\sigma\circ g})$ with a set of new $\Delta(H_{\sigma\circ g})+1$ colors using Vizing's theorem. 
Clearly, the obtained edge-coloring of $G$ is proper and uses at most $s+6.5\sqrt d+1$ colors, which is at most the stated bound. This finishes the proof of Lemma~\ref{pre-colored}.

\hide{
\subsection{Proof of Lemma~\ref{d+1covering_lemma}}\label{CoveringRNbhds}

We have to find a matching that completely covers the set $Z := \{z \in V : \deg(z) \geq d\}$.
By the Compactness Principle, it is enough to consider the case when $G$ is finite.

Let us prove the no-stumps case first. Tutte's 1-Factor Theorem \cite{tutte:47} implies that it is enough to check that for every
set $S\subseteq V$ the number of odd components that lie entirely 
inside $Z$ is at most $|S|$. (The reduction is as follows: add, if needed, an
isolated vertex to make $|V|$ even, make all pairs in
$V\setminus Z$ adjacent and look for a perfect matching in this new graph on $V$.) 

Suppose that the lemma is false; take $S$ that
violates the above condition. Each odd
component $C$ 
of $G-S$ that
lies inside $Z$ sends at least $d$ edges to $S$: this follows from the definition of $Z$ if $|C|=1$ and from the absence of stumps if $|C|\ge 2$. Thus
$|E(S,S^c)|\ge (s+1)d$, where $s:=|S|$. On the other hand, all vertices of $S$ have
degree at most $d$ except at most one vertex of degree $d+1$. 
Hence the total degree of $S$ is at most $sd+1 < (s+1)d$, a contradiction since~$d \geq 2$. 

Let us do the bipartite case now. Split $Z = Z_1\cup Z_2$ into two parts according to the bipartition of $G$.
 It is enough to show that there is a matching that covers $Z_1$ and 
one that covers $Z_2$. Indeed, the union of these two matchings consists of even cycles and paths;
moreover at least one endpoint of each path of odd order has to be outside of $Z$. Thus $Z$ can be covered by cycles and paths of even order, each admitting a perfect matching.

So suppose that there is no matching that covers, say, $Z_1$. By the K\H{o}nig-Hall theorem this means that there is a subset $S \subseteq Z_1$ such that the set of neighbors $T$ of $S$ has strictly smaller size than $S$. However each vertex in $S$ has degree at least $d$, so the number of edges leaving $S$ is at least $d\,|S|$, while the number of edges arriving in $T$ is at most $d\,|T| + 1 < d\,|S|$ if $d \geq 2$, again a contradiction.  
This finishes the proof of Lemma~\ref{d+1covering_lemma}.

}

\section{An application}\label{MinK}

As we mentioned in the Introduction, a natural question is to determine  $k_{\C B}(d)$ (resp.\ $k'_{\C B}(d)$), the smallest $k$ such that every graphing $\GG=(V,\C B,E,\mu)$ of maximum degree 
$d$ can be generated by $k$ maps $\phi_1,\dots,\phi_k$ as in Definition~\ref{df:graphing} (resp.\ where
we additionally require that $A_i\cap B_i=\emptyset$). The results of Marks~\cite{marks:16} determine these functions exactly.

\begin{propo}\label{pr:kB} We have for all $d\ge 1$ that $k_{\C B}(d)=d$
and $k_{\C B}'(d)=2d-1$.\end{propo}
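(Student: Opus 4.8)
The plan is to derive both equalities by combining the upper bounds coming from standard edge-coloring results with the lower bounds furnished by Marks' constructions cited earlier in the paper. I will treat the two parameters separately.

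For $k_{\C B}(d)$: the lower bound $k_{\C B}(d)\ge d$ comes from considering a $d$-regular graphing (for instance, a suitable Bernoulli shift graphing) in which every Borel partition of $E$ into pieces of degree bound $2$ must use at least $\lceil d/2\rceil$ pieces purely for degree reasons — but this only gives $\lceil d/2\rceil$, not $d$. To get the sharp bound $d$, I would instead invoke Marks' $d$-regular acyclic Borel graph $\GG$ with $\chi_{\C B}'(\GG)=2d-1$: since each map $\phi_i$ contributes a subgraph of degree at most $2$, hence a union of at most two Borel matchings, $k$ maps give at most $2k$ Borel matchings covering $E$, so $2k\ge \chi_{\C B}'(\GG)=2d-1$, yielding $k\ge d$. (One must check that Marks' example, or a variant of it, can be realized as a graphing, i.e.\ carries an invariant probability measure; since it is built from group actions this is automatic, and the measurable chromatic index is then also $2d-1$, which is what we actually need.) The upper bound $k_{\C B}(d)\le d$ follows from Theorem~\ref{th:KST} for the Borel case, or more precisely: decompose $E$ into $2d-1$ Borel matchings by Theorem~\ref{th:KST}, pair them up into $d$ subgraphs of degree bound $2$ (the last one a single matching), and realize each degree-$\le 2$ Borel subgraph by one measure-preserving Borel bijection — a union of Borel paths and cycles admits such a generating map. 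This gives $k_{\C B}(d)\le d$.

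For $k_{\C B}'(d)$: here we require $A_i\cap B_i=\emptyset$, so each $\phi_i$ is a Borel matching (an edge $\{x,\phi_i(x)\}$ can never be "doubled back"), and conversely any Borel matching is generated by one such map. Hence the minimum $k$ with the disjointness constraint is exactly the Borel chromatic index $\chi_{\C B}'(\GG)$ maximized over $d$-regular (or degree-$\le d$) graphings. The upper bound $k_{\C B}'(d)\le 2d-1$ is then immediate from Theorem~\ref{th:KST}. The lower bound $k_{\C B}'(d)\ge 2d-1$ is exactly Marks' construction of a $d$-regular graphing with $\chi_{\C B}'=2d-1$ (again checking that the example is a genuine graphing, i.e.\ admits an invariant measure, which it does being group-theoretic in origin).

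The main obstacle is the bookkeeping around the distinction between $\chi_{\C B}'$ (Borel matchings, every edge colored) and what a single generating map $\phi_i$ can realize: without the disjointness hypothesis a map of degree $2$ is \emph{not} a matching, so one must argue carefully that $k$ maps correspond to at most $2k$ Borel matchings (splitting each degree-$\le 2$ Borel graph into two matchings by $2$-coloring its edges, which is possible since it is a disjoint union of Borel paths and cycles — even bi-infinite ones can be $2$-edge-colored in a Borel way after a Borel orientation), and conversely that $d$ maps suffice by bundling the $2d-1$ matchings of Theorem~\ref{th:KST}. Verifying that Marks' examples genuinely satisfy Definition~\ref{df:graphing} — i.e.\ carry an invariant probability measure so that the measurable and Borel chromatic indices coincide at $2d-1$ — is the other point that needs a sentence of justification rather than being taken for granted.
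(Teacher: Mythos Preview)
Your treatment of $k_{\C B}'(d)$ is essentially correct and matches the paper. However, both directions of your argument for $k_{\C B}(d)=d$ contain genuine gaps.

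\textbf{Lower bound.} You argue that each map $\phi_i$ contributes a degree-$\le 2$ subgraph which ``is a union of at most two Borel matchings'', and later that bi-infinite paths ``can be $2$-edge-colored in a Borel way after a Borel orientation''. This is false. The irrational rotation $\C T_\alpha$ of Example~\ref{ex:translation} is generated by a single Borel bijection, yet $\chi_{\C B}'(\C T_\alpha)=3$; the bi-infinite paths it produces cannot be Borel $2$-edge-colored. The paper's proof avoids this trap by exploiting an additional feature of Marks' construction: it is \emph{bipartite}, with a Borel bipartition $V=V_1\cup V_2$. Restricting $\phi_i$ to $A_i\cap V_1$ and to $A_i\cap V_2$ then gives two genuine Borel matchings (any potential overlap $x_1=\phi_i(x_2)$ is ruled out by the parts), and this is what yields $2k\ge \chi_{\C B}'(\GG)=2d-1$.

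\textbf{Upper bound.} You propose to pair the $2d-1$ matchings from Theorem~\ref{th:KST} into $d$ degree-$\le 2$ subgraphs and then ``realize each degree-$\le 2$ Borel subgraph by one measure-preserving Borel bijection''. This step also fails: realizing a degree-$\le 2$ graph by a single bijection amounts to orienting it with in- and out-degree at most $1$, and there exist $2$-regular graphings (even unions of two Borel matchings) for which no such Borel---or even measurable---orientation exists. The paper exhibits exactly such an example in the Remark following Theorem~\ref{th:MinK}. Instead, the paper obtains $k_{\C B}(d)\le d$ by a direct greedy construction: using a $2$-sparse labeling, it assigns to each edge an oriented arc in one of $d$ directed graphs $F_1,\dots,F_d$, checking that for each new edge at most $d-1$ of the $F_j$ are blocked. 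This bypasses the orientation problem entirely.

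Finally, your remark that Marks' graph carrying an invariant measure ``is automatic'' since it comes from a group action is too quick; the paper devotes a paragraph to this, building the measure explicitly on equivalence classes in the free part of the shift action of $(\I Z/d\I Z)*(\I Z/d\I Z)$ on $[3]^\Gamma$.
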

\begin{proof} Since the case $d=1$ is trivial, assume that $d\ge 2$.
 The lower bound  in both cases can be achieved by the same construction. Namely, take the Borel
graph $\GG=(V,\C B, E)$ constructed by Marks~\cite{marks:16} such that $\Delta(\GG)=d$, 
$\chi_{\C B}'(\GG)=2d-1$ and $\chi_{\C B}(\GG)=2$, with the last property being witnessed by a
partition $V=V_1\cup V_2$.

Not every Borel graph can be made into a graphing by choosing a suitable measure. For example, neither the grandmother graph defined in~\cite[Example 18.36]{lovasz:lngl}
nor any union of its vertex-disjoint copies admits such a measure. However, the Borel graph
constructed by Marks can be turned into a graphing.  In order to
show this, we have to unfold Marks' construction, using~\cite[Lemma~3.12]{marks:16}. 
Namely, let $\Gamma:=\Gamma_1*\Gamma_2$
be the free product of two copies of $\I Z/d\I Z$. 
The group $\Gamma$ naturally acts on $[3]^\Gamma$, the set of functions from $\Gamma$ to $[3]$.
Let $\mathrm{Free}([3]^\Gamma)$ be the \emph{free part} of this action which consists of
those $f\in [3]^\Gamma$ such that  $\gamma\cdot f\not=f$ for all non-identity $\gamma\in\Gamma$.
For $i=1,2$, let $V_i$ consist of \emph{$\Gamma_i$-equivalence classes} of $f\in \mathrm{Free}([3]^\Gamma)$
that is, sets $\{f,x\cdot f,\dots,x^{d-1}\cdot f\}$, where $x$ is a generator of $\Gamma_i$. 
Let  $X_1\in V_1$ and $X_2\in V_2$ be adjacent in $\GG$ if they intersect. Since we restricted ourselves
to the free part, each equivalence class consists of $d$ elements and the obtained graph $\GG$ is $d$-regular. Its vertex set $V=V_1\cup V_2$ admits the natural Borel structure coming from the product 
topology on $[3]^\Gamma$ as well as the natural probability measure $\mu$: to sample from $\mu$ take the
$\Gamma_i$-equivalence class of $f:\Gamma\to[3]$, where the index $i\in [2]$ and all values $f(\gamma)\in [3]$ for $\gamma\in\Gamma$ are uniform and independent. Let us show that we indeed have a graphing.
Note that $f\in \mathrm{Free}([3]^\Gamma)$ with probability 1 and the natural projection $p_i:V_i'\to V_i$ which maps an element of $V_i':=\mathrm{Free}([3]^\Gamma)$ to its $\Gamma_i$-equivalence class is measure-preserving. Let 
$\GG'$ be the bipartite graph on the disjoint union of $V_1'$ and $V_2'$ obtained by pulling $\GG$ back along $p_1\sqcup p_2$ (where each edge of $\GG$ gives $d^2$ edges in $\GG'$). A moment's thought
reveals that $E(\GG')$ can be generated as in \eqref{eq:E} by $d^2$ functions $\phi_{x,y}:V_1'\to V_2'$ for $x\in\Gamma_1$
and $y\in \Gamma_2$, where $\phi_{x,y}$ acts on $f\in V_1'$ first by $x$ and then (viewing the result as an element of $V_2'$)
by $y$. Clearly, each $\phi_{x,y}$
is measure-preserving and thus $\GG'$ is a graphing. It routinely follows that $\GG$ is a graphing too.

Now, if the bipartite graphing $\GG$ can be defined by $k$ Borel maps $\phi_i:A_i\to B_i$, $i=1,\dots,k$, as in 
Definition~\ref{df:graphing}, then its edge-set can be partitioned into $2k$ Borel matchings that are defined inductively on $i=1,\dots,k$ as follows:
 \begin{eqnarray*}
 M_i&:=&\big\{\{x,\phi_i(x)\}\mid x\in V_1\cap A_i\big\}\setminus \cup_{j=1}^{i-1} (M_j\cup M_j'),\\
 M_i'&:=&\Big(\big\{\{x,\phi_i(x)\}\mid x\in V_2\cap A_i\big\}\setminus M_i\Big)\setminus \cup_{j=1}^{i-1} (M_j\cup M_j').
 \end{eqnarray*} 
 Thus $2k\ge \chi_{\C B}'(\GG)=2d-1$, that is, $k\ge d$. If, furthermore, $A_i\cap B_i=\emptyset$ 
for all $i$, then we directly get a partition of $E$ into $k$ Borel matchings as in~\eqref{eq:E}, that is,
$k\ge \chi_{\C B}'(\GG)=2d-1$. This gives the desired lower bounds on $k_{\C B}(d)$ and $k_{\C B}'(d)$.

Conversely, let $\GG=(V,\C B,E,\mu)$ be an arbitrary graphing with maximum degree $d$. 
Proposition~\ref{pr:subgraphing} shows that if $\phi$ is an invertible Borel map between
two Borel subsets $A,B\subseteq V$ such that $\{x,\phi(x)\}\in E$ for all $x\in A$ then $\phi$ 
preserves the measure~$\mu$. In particular, every Borel matching $M\subseteq E$ can be
represented by one such function $\phi$ (by picking one element $x$ in each $\{x,y\}\in M$ in
a Borel way and letting $\phi(x)=y$). Since $E$ can be partitioned into at most $2d-1$ Borel
matchings by Theorem~\ref{th:KST}, we conclude that $k_{\C B}'(d)\le 2d-1$.

Likewise, in order to prove that $k_{\C B}(d)\le d$, let us show that $E$ can be partitioned into at most
$d$ Borel directed graphs $F_1,\dots,F_d$, each with maximum in- and out-degree at most $1$. First, take a 2-sparse labeling $\ell:V\to[m]$. Initially, let each $F_i$ be empty. 
Iteratively, over pairs $\{u,v\}\subseteq [m]$, take all edges of $E$ labeled as $\{u,v\}$ and for each such 
edge $\{x,y\}$ pick the lexicographically smallest triple $(j,\ell(a),\ell(b))$ where $j\in [d]$, $\{a,b\}=\{x,y\}$,
and when we add the ordered arc $(a,b)$ to $F_j$ then both
maximum in-degree and maximum out-degrees of $F_j$ are still at most~$1$. Note that at least one such choice of $(j,a,b)$
exists: if some $j$ is forbidden, then $F_j$ has already at least two arcs connecting 
$\{x,y\}$ to its complement, which rules out  at most $d-1$ values of~$j$.
Also, the choices that we simultaneously make for some pair $\{u,v\}$ cannot conflict with
each other by the 2-sparseness of~$\ell$. Clearly, all sets (and maps) that we obtain are
Borel. This finishes the proof.\qed\end{proof}

It would be fair to say that the question addressed by Proposition~\ref{pr:kB} is more 
about Borel graphs rather than graphings. Indeed, it asks for a Borel decomposition of 
$E$ into matchings (or unions of directed paths and cycles) and the role of the measure $\mu$ in the
definition of $k_{\C B}$ and $k_{\C B}'$ is only to restrict us to those Borel graphs that
can be turned into graphings. The proof of Proposition~\ref{pr:kB} shows that we can drop this
restriction and yet the values of  $k_{\C B}$ and $k_{\C B}'$ will not change. 

On the other hand, one can ignore a set of measure zero in many applications of graphings. 
Note that, modulo removing a null-set of vertices, Definition~\ref{df:graphing} does not change if we require only that the sets $A_i,B_i$ are in $\C B_\mu$, the completion of $\C B$ with respect to $\mu$, while $\phi_i$ is $\mu$-measurable. Indeed, every $\mu$-measurable $\phi_i:A_i\to B_i$ can be made Borel by removing a null-set from $A_i$ (and the corresponding null-set from $B_i$). This change of definition may bring $k$ down. With this in mind, we define $k(d)$ (resp.\ $k'(d)$) as the smallest $k$ such that for every graphing $\GG=(V,\C B,E,\mu)$ with $\Delta(\GG)=d$ there are $k$ invertible measure-preserving maps $\phi_i:A_i\to B_i$ with $A_i,B_i\in\C B_\mu$ for $i=1,\dots,k$ such
that (\ref{eq:E}) holds (resp.\ where we additionally require that $A_i\cap B_i=\emptyset$). Note that the maps $\phi_i$ and $\phi_i^{-1}$ in the definition of $k(d)$ and $k'(d)$ are $\mu$-measurable but not necessarily Borel. 

Interestingly, this relaxation of the restrictions on  $\phi_i$'s reduces the minimum $k$ by factor $2+o(1)$ as $d\to \infty$, which follows with some work from Theorem~\ref{vizing theorem}. We need an auxiliary result
first.

\begin{lemma}\label{lm:mbreak} Let the edge-set of a graphing $\GG=(V,\C B,E,\mu)$ be partitioned
into Borel sets, $E=F_0\cup F_1\cup\cdots\cup F_k$, so that $F_j$ has maximum degree
at most $2$ for each $j\in [k]$  while $F_0$ is a matching. Then  there is a Borel matching $M\subseteq E$ such that the measure of vertices in
infinite components of each $F_j\setminus M$, $j\in [k]$, and of $F_0\cup M$ is zero.\end{lemma}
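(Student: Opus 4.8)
The plan is to reduce the two requirements on $M$ to a single one-dimensional statement about each $F_j$ separately, and then to build $M$ by merging cut-sets of the individual layers. First, some reductions. Since an edge of $M$ lying in $F_0$ affects none of the graphs $F_j\setminus M$ with $j\ge 1$ and only enlarges $F_0\cup M$, I may assume $M\subseteq F_1\cup\cdots\cup F_k$. The role of the condition on $F_0\cup M$ is then absorbed into the following elementary observation: if $M$ is a \emph{strong matching} of $\GG$, meaning a matching such that no edge of $\GG$ joins two distinct edges of $M$, then every component of $F_0\cup M$ has at most four vertices. Indeed, $F_0\cup M$ has maximum degree at most $2$, so each component is a path or cycle whose edges alternate between the matchings $F_0$ and $M$; but any two edges of $M$ occurring in such a component are joined by an edge of $F_0\subseteq E$, contradicting strongness, so each component has at most one $M$-edge and hence at most four vertices and no cycles. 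Thus it is enough to find a Borel strong matching $M\subseteq F_1\cup\cdots\cup F_k$ such that for every $j\in[k]$ almost every vertex lies in a finite component of $F_j\setminus M$.

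Next, cutting a single layer. Fix $j$ and let $L_j$ be the Borel set of vertices in infinite components of $F_j$; since $\Delta(F_j)\le 2$, each such component is a one-way or two-way infinite path. Fix a large constant $R$ and, using Corollary~\ref{cr:label}, an $R$-sparse labelling $\ell\colon V\to[m]$ of $\GG$. Call $v\in L_j$ a \emph{$j$-marker} if $\ell(v)<\ell(w)$ for every $w$ in the $F_j$-component of $v$ with $1\le\dist_{F_j}(v,w)\le R$. Since $\ell$ is $R$-sparse, vertices at $F_j$-distance at most $R$ get distinct labels, and from this one checks: no two $j$-markers, and no $j$-marker and a neighbour of it, lie at $F_j$-distance at most $R$; and iterating ``move to a strictly smaller-label vertex within $F_j$-distance $R$'' reaches a $j$-marker in at most $m$ steps, so the $j$-markers are $2mR$-dense along every infinite $F_j$-component. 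Let the Borel set $C_j\subseteq F_j$ contain, for each $j$-marker $v$, the (canonically chosen) $F_j$-edge at $v$ towards its larger-label neighbour. These facts make $C_j$ a matching, and since every edge of an infinite $F_j$-component lies within $F_j$-distance $O(mR)$ of an edge of $C_j$ incident to a marker, the graph $F_j\setminus C_j$ has all components of size $O(mR)$, in particular finite. Thus every layer admits a cutting matching $C_j\subseteq F_j$; the trouble is that $C_1\cup\cdots\cup C_k$ need be neither a matching nor strong.

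Finally, coordinating the layers, which I expect to be the crux. I would first weaken what is required of $M\cap F_j$, using the recurrence of measure-preserving graphings: by a Poincaré-recurrence argument on the ``line'' structure of $F_j$ restricted to $L_j$, it suffices that $M\cap F_j$ be non-negligible on almost every ergodic piece of $F_j$ on $L_j$, for then almost every infinite $F_j$-component meets $M\cap F_j$ infinitely often in each direction and so $F_j\setminus M$ has only finite components almost everywhere. Hence the task reduces to thinning $C:=C_1\cup\cdots\cup C_k$ down to a Borel \emph{strong} matching $M$ whose trace on each $F_j$ is still non-negligible on almost every $F_j$-component. The thinning itself is routine: process the edges of $C$ in order of their labels under a sufficiently sparse labelling and greedily keep an edge whenever it is at $\GG$-distance at least $2$ from every edge kept so far, making the choices Borel by the parallel-flipping scheme of Section~\ref{sec:augmenting}. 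The delicate point is to verify that this thinning cannot destroy a positive-measure, $F_j$-invariant family of components of $F_j$ --- that is, to exclude ``shadowing'', where every $C_j$-cut along such a component happens to be blocked by a competing kept edge of another layer. My plan for this is to take $R$, and hence $1/\muC(C_j)$, large, so that the blocked region $N_2(V(M))$ has small measure, and then to iterate the thinning over an increasing sequence of scales, using a Borel--Cantelli estimate (in the style of Section~\ref{sec:augmenting}) to drive the measure of the set of un-cut vertices of each $F_j$ to zero; making this quantitative is where the real work lies. Given such an $M$, the first reduction yields the lemma.
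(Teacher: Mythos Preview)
Your reductions are clever but take a genuinely different route from the paper, and the crucial step remains undone.

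\textbf{What the paper does.} The paper's proof is entirely the ``iterative Borel--Cantelli over growing scales'' argument that you name in your last sentence, with no preliminary reductions. It fixes a fast-growing sequence $d_0\ll d_1\ll\cdots$ of even integers, starts with $M=\emptyset$, and at step $i$ works on layer $j\equiv i\pmod{k+1}$. For $j\neq 0$ it picks a maximal $(F_j\setminus M,\,d_i)$-sparse subset $D_i\subseteq F_j\setminus M$, adds it to $M$, and deletes from $M$ the set $D_i'$ of earlier $M$-edges that now conflict; for $j=0$ it picks a maximal $(F_0\cup M,\,d_i)$-sparse subset $D_i\subseteq M$ and removes it from $M$. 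Maximality bounds each component of the updated $F_j'$ by about $2d_i$. Later steps for other layers may re-merge components, but the measure of $N_{2d_i+4}(V(D_h))$ for $h>i$ is tiny (since $|V(D_h)|$ has measure $O(1/d_h)$), and if the $d_h$ grow fast enough Borel--Cantelli gives that a.e.\ vertex is affected only finitely often. That is the whole proof.

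\textbf{Where your route diverges, and what it buys.} Your strong-matching reduction for $F_0\cup M$ is neat and genuinely different: it disposes of layer $0$ once and for all, whereas the paper treats $F_0\cup M$ symmetrically as the ``$(k{+}1)$st layer'' and keeps revisiting it (the $j=0$ case, where edges are \emph{removed} from $M$). Your Poincar\'e-recurrence reformulation is correct in spirit, but it only rephrases ``$M$ cuts a.e.\ infinite $F_j$-line'' as ``$V(M\cap F_j)$ has positive conditional measure on a.e.\ ergodic component of the $F_j$-shift''; it does not make the coordination of layers any easier to verify.

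\textbf{The gap.} The step you label ``the real work'' --- thinning $C=C_1\cup\cdots\cup C_k$ to a strong matching while keeping a non-negligible trace in every $F_j$-ergodic component --- is exactly the content of the paper's proof, and you have not supplied it. A single pass of greedy thinning at one scale $R$ does \emph{not} suffice: the shadowing you anticipate can kill all of $C_j$ on a positive-measure, $F_j$-invariant set (think of two interleaved lines where every $C_1$-marker happens to sit in the $2$-neighbourhood of a $C_2$-marker of smaller label). So you are forced into an iteration over growing scales with repair steps, and at that point you are essentially redoing the paper's argument --- except that your strong-matching constraint enlarges the ``blocked region'' from $N_0(V(D_i))$ to $N_2(V(D_i))$, which makes the conflict sets $D_i'$ larger and the estimates slightly worse (though still workable). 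In short: your outline can be completed, but only by carrying out the very iteration the paper performs; your reductions are detours, not shortcuts.
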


\begin{proof} 
Choose a fast growing sequence of integers $d_0\ll d_1\ll d_2\ll \dots$~.
Initially, let $M:=\emptyset$. 
We define $F_j':=F_j\setminus M$ for $j\in [k]$ and $F_0':=F_0\cup M$; these are updated every time when the current matching $M$ changes. 
We repeat a certain iteration step over $i\in\I N$. Given $i$, pick some $j\in\{0,\dots,k\}$ so that each $j$ is considered for infinitely many values of~$i$. For example, let us agree
that $j=j(i)$ is always the residue of $i$ modulo $k+1$. 

Informally speaking, given the current $i$ we ``take care'' of $F_j'$ by changing $M$ so that the updated edge-set $F_j'$ has only finite components, each of size at most $O(d_i)$. By doing this carefully, we can ensure that we change $M$ on a set of measure $O(d_i^{-1})$. Of course, some iteration step at a later moment $h>i$ may create infinite components in $F_j'$. But, since each $F_j'$ is ``repaired'' for infinitely many moments $i$, a vertex $y$  belongs to an infinite component of the final set $F_j'$ only if some ``bad'' events that are related to $y$ happen for infinitely many values of~$h$. An application of the Borel--Cantelli Lemma will show that the 
measure of such vertices $y$ is zero. 

Before we can describe the iteration step, we need some definitions. For a set $Y\subseteq E$, let $\partial{Y}:=\{x\in V(Y)\mid \deg_Y(x)=1\}$ consist of  vertices that are incident to exactly one edge of $Y$. A subset $D\subseteq Y$ is called
\emph{$(Y,r)$-sparse} if, for every edge $e\in D$, the distance with respect to $Y$ between $e$
and $V(D\setminus \{e\})\cup \partial{Y}$ is strictly larger than~$r$. 
Informally, $D$ is $(Y,r)$-sparse if, within $Y$, no element of $D$ is close
to another element of $D$ or to the ``boundary'' $\partial{Y}$.

Now, given $i\in \I N$, the corresponding iteration step is as follows. Let $X:=F_j'$
if $j\not =0$ and
$X:=M$ otherwise.  Take a maximal Borel $(F_j',d_i)$-sparse subset 
$D_i\subseteq X$. Such a set $D_i$ 
can be constructed by the familiar argument where we take a $(d_i+1)$-sparse labeling $V(F_j')\to [s]$ of $F_j'$ and, iteratively over all pairs $\{x,y\}\subseteq [s]$, add to $D_i$ all admissible edges from $X$ whose label set is~$\{x,y\}$.)  By definition,
the obtained set $D_i$ is a matching.
If $j=0$, then remove $D_i$ from $M$. If $j\not=0$, then add $D_i$ to $M$ and remove 
  $$
 D_i':=\{e\in M\setminus D_i\mid  e\cap V(D_i)\not=\emptyset\}
 $$
 from $M$. (In order words, we ensure that $M$ is still a matching by removing the set $D_i'\subseteq M$ of  the ``earlier'' edges that
conflict with $D_i$.) Note that, in both cases, the updated set $F_j'$ loses
all edges from $D_i$.  We define the final set $M_\infty\subseteq E$ to consist of those pairs that are eventually included into the current matching from some moment onwards:
 $$
 M_\infty:= \cup_{i\not\equiv 0}\left(  D_i\setminus\left((\cup_{h\ge i\atop h\not\equiv0} D_h') \cup (\cup_{h>i\atop h\equiv0} D_h)\right)\right).
 $$
 (Here and below all residues are modulo $k+1$.)
 Clearly,
$M_\infty$ is a Borel matching. Let us show that it has all required properties.

First, we show that each set $D_i$ has small measure. For convenience, assume that each $d_i$ is even. By construction, $D_i$ is $(F_j',d_i)$-sparse.  Thus the $(d_i/2)$-neighborhoods of edges in $D_i$, taken with respect to $F_j'$,  are pairwise
disjoint and each contains exactly $d_i+2$ vertices as the maximum degree of $F_j'$ is at most~$2$. 
Since all sets are Borel, we conclude by Proposition~\ref{pr:subgraphing} 
that $\muN{V(D_i)}\le 2/(d_i+2)$.

Let $Y_i:=\cup_{h=i+1}^\infty N_{2d_i+4}(V(D_h))$ consist of vertices that belong to the $(2d_i+4)$-neighborhood (taken with respect the whole edge-set $E$)
of $V(D_h)$  for at least one $h>i$. 
Since 
 $$
 \muN{N_{2d_i+4}(V(D_h))}\le (\Delta(\GG)-1)^{2d_i+4}\,\muN{V(D_h)}\le (2k)^{2d_i+4} \cdot \frac{2}{d_h+2},
 $$
 it follows that $\muN{Y_i}\le  (2k)^{2d_i+4} \sum_{h=i+1}^\infty 2/(d_h+2)$. 
By letting the numbers $d_h$ grow 
sufficiently
fast, we can ensure that $\sum_{i=0}^\infty \muN{Y_i}<\infty$. The Borel--Cantelli Lemma implies that
the set 
 $$Y:=\cap_{i=0}^\infty \cup_{h=i+1}^\infty Y_h$$ 
 of vertices that belong to infinitely many of the
sets $Y_i$ has measure zero.

Thus, in order to prove the lemma, it suffices to show that, for every 
$j\in\{0,\dots,k\}$, each vertex $y$ 
of an infinite component of the final set $F_j'$ belongs to $Y$. In fact, we are going to show the stronger claim
that $y\in Y_i$ for every $i\equiv j$.  Fix any such $i$. 

First, consider the case $j\not=0$. Then the final set is $F_j'=F_j\setminus M_\infty$. Consider the
moment when we are about to add $D_i$ to $M$. Recall that
$D_i$ is a maximal $(F_j',d_i)$-sparse subset of the current set $F_j'$. The maximality of $D_i$ implies that if we move
from $y$ in any of at most two possible directions along $F_j'$ (recall that $\Delta(F_j')\le 2$), then
we encounter within $2d_i+3$ steps an element of $D_i$ or a vertex of $F_j'$-degree at most $1$. 
Thus, at the moment right after we added $D_i$ to $M$ and before we removed $D_i'$ from $M$,
the component $C$ of $F_j'$ that contained $y$ was entirely covered by $N_{2d_i+3}(y)$. 
At the end, the $F_j'$-component $C$ of $y$ became
infinite. Consider the first time (after $D_i$ was added) when a new edge $e$ is attached to the current component $C\ni y$
(perhaps after some steps when $C$ had shrunk further).  Let us show 
that this cannot happen when $D_i'$ is removed from $M$. For this, it is enough to show that  $D_i'\cap F_j=\emptyset$. Now, if $D_i'\cap F_j\ni \{u,v\}$ with, say, $\{v,w\}\in D_i$, then $v$ would have
degree $1$ in $F_j'$ and the edge $\{v,w\}\in D_i$ would be too close in $F_j'$ to a degree-$1$
vertex, a contradiction. Thus there are only two ways for the edge $e$ to be added to $F_j'$: for 
some $h>i$ either $e\in D_h'$ with $h\not\equiv 0$
(that is, $e$ is removed from $M$ because
it intersects $V(D_h)$) or $e\in D_h$ with $h\equiv 0$.  Since $\dist(y,e)\le 2d_i+3$,  we conclude 
that $y\in N_{2d_i+4}(V(D_h))$, that is, $y\in Y_i$.

Finally, suppose that $j=0$ (that is, $i\equiv0$).  Then the final set is $F_0'=F_0\cup M_\infty$. At the moment, when we have just 
removed $D_i$ from $M$, the $F_0'$-component $C\ni y$ is a subset of
$N_{2d_i+4}(y)$ by a similar argument as above. (Here, $D_i$ is restricted to a subset
of $M$ but this can increase our distance estimates at most by 1 as $M$ contains every second
edge of each path in the current set $F_0'=F_0\cup M$.)  Again, consider the first
moment when some new edge $e$ gets attached to $C$. 
Here, this can happen in only one possible way, namely, $e$ belongs to some
$D_h$ where $h>i$ is not divisible by $k+1$ (then this set $D_h$ is added to the matching).
Here $y\in N_{2d_i+4}(V(D_h))$ and thus $y\in Y_i$. This completes the proof of the lemma.\qed\end{proof}

\begin{theorem}\label{th:MinK} Let $d\ge 1$. Then 
 \begin{eqnarray*}
 \lceil (d+1)/2\rceil &\le\ k(d)\ \le & \lceil (d+f(d)+1)/2\rceil,\\
 d+1  & \le\ k'(d)\ \le & d+f(d).
 \end{eqnarray*}
 In particular, by Theorem~\ref{th:main}, $k(d)=d/2+o(d)$ and $k'(d)=d+o(d)$ as $d\to\infty$.
\end{theorem}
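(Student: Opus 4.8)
The plan is to prove the four displayed inequalities one at a time and then feed in the estimate $f(d)=O(\sqrt d)$ from Lemma~\ref{pre-colored}. The underlying dictionary is the one already used in the Introduction: a single measure-preserving bijection $\phi_i\colon A_i\to B_i$ contributes to $E$ exactly its \emph{graph} $F_i:=\{xy:\phi_i(x)=y\}$, which is a sub-graphing of $\GG$ of maximum degree at most $2$ (a disjoint union of finite paths, finite cycles, one-sided rays and bi-infinite lines), and $F_i$ is a matching precisely when $A_i\cap B_i=\emptyset$. Conversely, any Borel or $\mu$-measurable matching $M\subseteq E$ is the graph of such a map: orient each edge towards its endpoint of smaller label in a fixed $2$-sparse labeling (Corollary~\ref{cr:label}), and invoke Proposition~\ref{pr:subgraphing} to see that the resulting bijection is measure-preserving, so in particular its two sides have equal measure. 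Thus generating $\GG$ by $k$ maps amounts to covering $E$ by $k$ max-degree-$2$ sub-graphings, each of which must be a matching in the primed version.

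\noindent\emph{Lower bounds.} For $k'(d)\ge d+1$ (with $d\ge 2$; the case $d=1$ is trivial) take a finite simple graph $G$ with $\Delta(G)=d$ and $\chi'(G)=d+1$ — for $d$ even one may use $K_{d+1}$, for $d$ odd the graph $K_{d+1}$ with one edge subdivided; both are overfull and hence Class~2 — and regard it as a graphing on its vertex atoms. A system of $d$ maps with disjoint domains and ranges would decompose $E(G)$ into $d$ matchings, contradicting $\chi'(G)=d+1$. For $k(d)\ge\lceil(d+1)/2\rceil$, first let $d$ be even and let $\GG_d$ be the Borel bipartite $d$-regular graphing of Laczkovich~\cite{laczkovich:88} ($d=2$) or Conley--Kechris~\cite{conley+kechris:13} ($d\ge 4$) in which every measurable matching misses a positive-measure set; fix a Borel $2$-colouring $V=V_1\cup V_2$. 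If $\GG_d$ is generated by $k$ maps $\phi_1,\dots,\phi_k$, then each $F_i$ carries a measurable proper $2$-edge-colouring: colour $\{x,\phi_i(x)\}$ by the class of $V_1,V_2$ containing $x$, so that at any vertex $z$ the edge leaving $z$ along $\phi_i$ and the edge entering $z$ along $\phi_i^{-1}$ get opposite colours. Hence $E(\GG_d)$ is covered by $2k$ measurable matchings; but a $d$-regular graphing cannot be covered by $d$ matchings (each would be a measurable perfect matching), so $2k\ge d+1$, i.e.\ $k\ge d/2+1=\lceil(d+1)/2\rceil$. For $d$ odd the same bound follows from the monotonicity $k(d)\ge k(d-1)$: enlarge an extremal $(d-1)$-graphing by a disjoint finite $d$-regular graph on atoms and note that, since connectivity components are invariant, restricting any generating system to the old part still generates it.

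\noindent\emph{Upper bounds.} Let $\GG$ be any graphing with $\Delta(\GG)\le d$. Theorem~\ref{vizing theorem}, applied with its parameter equal to $d$ (the set of degree-$(d+1)$ vertices being empty, hence trivially sparse), yields a Borel partition $E=E_0\cup E_1\cup\dots\cup E_{d+f(d)}$ with $E_1,\dots,E_{d+f(d)}$ matchings and $\mu(V(E_0))=0$. Let $\GG'$ be the union of all connectivity components that meet $V(E_0)$; then $V(\GG')$ is $\mu$-null, $E_0\subseteq E(\GG')$, and $E(\GG')$ is $\muC$-null. Using K\H{o}nig's and Vizing's theorems for arbitrary graphs (by compactness, as in the Introduction) we may decompose $E(\GG')$ into $d+1\le d+f(d)$ matchings $N_1,\dots,N_{d+1}$, which are measurable because contained in a $\muC$-null set. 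Replacing $E_j$ by $M_j:=(E_j\setminus E(\GG'))\cup N_j$ for $j\le d+1$ and by $M_j:=E_j\setminus E(\GG')$ for $d+1<j\le d+f(d)$ gives measurable matchings $M_1,\dots,M_{d+f(d)}$ whose union is all of $E$. Each $M_j$ is the graph of a measure-preserving map with disjoint domain and range, so $k'(d)\le d+f(d)$. For $k(d)$, pair these matchings: $M_{2i-1}\cup M_{2i}$ is a sub-graphing of maximum degree at most $2$, and every such graphing is generated by a single measure-preserving map — traverse each finite path, finite cycle and ray by its natural successor, and for the bi-infinite line components use the standard fact that, after discarding a null set, a $2$-regular graphing all of whose components are bi-infinite lines is generated by a single measure-preserving transformation. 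Together with one extra map for a leftover matching when $d+f(d)$ is odd, this gives $k(d)\le\lceil(d+f(d)+1)/2\rceil$. Finally, substituting $f(d)=O(\sqrt d)$ from Lemma~\ref{pre-colored} gives $k(d)=d/2+o(d)$ and $k'(d)=d+o(d)$.

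The combinatorics here is minimal; the work is measure-theoretic bookkeeping. The two delicate points, and the main obstacles, are: (i) the clean-up of the exceptional edge set $E_0$, where one must verify that recolouring the null sub-graphing $\GG'$ via the Axiom of Choice meshes with the Borel matchings $E_j$ to produce genuine (not merely almost-everywhere) matchings — this works precisely because everything takes place inside a $\muC$-null, hence $\muC$-completion-measurable, set; and (ii) representing a max-degree-$2$ graphing by a single map, where the bi-infinite line components force an appeal to the (standard) result on measurably orienting line graphings modulo a null set. Point (ii) is the only place the argument steps outside the self-contained framework of the paper, and it is exactly what makes the factor-$2$ saving for $k(d)$ (relative to $k'(d)$) possible.
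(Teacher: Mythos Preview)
Your argument for the upper bound on $k(d)$ contains a genuine gap. The ``standard fact'' you invoke---that a $2$-regular graphing whose components are bi-infinite lines is generated by a single measure-preserving transformation after discarding a null set---is false. The paper itself supplies a counterexample in the Remark immediately following the theorem: take two copies $C_1,C_2$ of the unit circle, let $\phi$ be the natural involution swapping them, and let $\psi$ act on each $C_j$ as reflection in an axis $A_j$, where the angle between $A_1$ and $A_2$ is an irrational multiple of~$\pi$. The edge set is the union of the two matchings given by $\phi$ and $\psi$, so this graphing is exactly of the form $M_{2i-1}\cup M_{2i}$ you consider, and every component is a bi-infinite line. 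If it could be generated by one map (equivalently, oriented with in- and out-degree $1$ a.e.), then the set $X=\{x\in C_1: x\to\phi(x)\}$ would be invariant a.e.\ under $\phi\psi\phi\psi$, which is rotation of $C_1$ by an irrational angle; ergodicity forces $\mu(X)\in\{0,\mu(C_1)\}$, and either value is readily seen to be impossible. What you are presumably recalling is that the \emph{orbit equivalence relation} of such a graphing is hyperfinite and hence generated by a single transformation---but that transformation need not realise the given edge set.

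The paper repairs this via Lemma~\ref{lm:mbreak}: one sets aside a single matching $F_0$ and pairs the remaining $m-1$ matchings into $k=\lceil(m-1)/2\rceil$ degree-$2$ graphs; the lemma then produces a Borel matching $M$ such that, off a null set, each $F_j\setminus M$ (for $j\ge1$) and $F_0\cup M$ has only finite components, which can be oriented by a Borel rule. This costs the extra map $F_0$, yielding the stated bound $\lceil(d+f(d)+1)/2\rceil$ rather than the $\lceil(d+f(d))/2\rceil$ your argument would give if it worked.

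Your lower bound for $k(d)$ via the Conley--Kechris bipartite graphings is different from the paper's route, which simply cites a finite graph $G$ with $\Delta(G)=d$ and $\chi'(G)=d+1$. Your approach is correct and is in fact sharper for even $d$: any finite graph with $\Delta=d$ even decomposes into $d/2$ subgraphs of maximum degree~$2$ (embed it in a $d$-regular multigraph and apply Petersen's $2$-factor theorem), so a finite example alone yields only $k(d)\ge\lceil d/2\rceil$. The remaining parts---the upper bound on $k'(d)$ and the lower bound on $k'(d)$---match the paper.
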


\begin{proof} 
Let $\GG=(V,\C B,E,\mu)$ be a graphing with maximum degree at most $d$.
By Theorem~\ref{vizing theorem}, there is a partition $E=M_0\cup \dots\cup M_m$ into Borel matchings $M_1,\dots,M_m$ and a $\muC$-null-set $M_0\subseteq E$, where $m:=d+f(d)$. We can additionally assume that $M_0$ is the union  of some connectivity components of~$\GG$. Using the Axiom of Choice and (finite) Vizing's theorem, 
we can partition $M_0$ into $d+1$ matchings $M_1',\dots,M_{d+1}'$. Then the measurable
matchings $M_i\cup M_i'$, $i\in [d+1]$, and $M_i$, $i\in \{d+2,\dots,m\}$, can be oriented (by using
some fixed $1$-sparse labeling) to produce
the measure-preserving maps $\phi_1,\dots,\phi_{m}$ that establish the claimed upper bound $k'(d)\le d+f(d)$.

The upper bound on $k(d)$ follows by pairing the above Borel matchings $M_1,\dots,M_{m-1}$ 
into $k:=\lceil (m-1)/2\rceil$ graphs $F_1,\dots,F_k$
of maximum degree at most $2$ and taking $F_0:=M_m$. Let
$M$ be the matching returned by Lemma~\ref{lm:mbreak}.  We obtain a partition of $E$ a.e.\ into $k+1$ subgraphs, $F_0\cup M,F_1\setminus M,\dots,F_k\setminus M$ of maximum degree at most $2$ with finite components.  The argument of Lemma~\ref{lm:finite} shows that we can orient all these graphs into directed paths 
and cycles in a Borel way. Then we fix the null-set of errors using
the upper bound of $\lceil (d+1)/2\rceil\le k+1$ for countable graphs. This naturally gives $k+1$ measurable maps that generate~$E$. 

The claimed lower bound on  $k'(d)$ is easy: for example, take a finite graph $G$ with $\Delta(G)=d$ and $\chi'(G)=d+1$ and turn it into a graphing
by using the uniform measure on the vertex set.
Also, to show that $k(d)\ge (d+1)/2$ for odd $d$, take any graphing such that the measure of vertices of degree
$d$ is positive and observe that one needs at least $\lceil d/2\rceil=(d+1)/2$ functions $\phi_i$
to represent all edges at a degree-$d$ vertex. 

Finally, the stated lower bound on $k(d)$ for even $d$ can be
obtained by taking the $d$-regular bipartite graphing $\C G$ of Laczkovich \cite{laczkovich:88} and Conley and Kechris~\cite[Section~6]{conley+kechris:13}
that was discussed in Remark~\ref{rm:evend}. It cannot be  represented by
$d/2$ functions $\phi_1,\dots,\phi_{d/2}$ a.e.,\ for otherwise $\C G$ 
would have a perfect matching a.e.\ (for example, $\{\{a,\phi_1(a)\}\mid a\in 
A\}$,
where $A\cup B$ is a bipartition of the vertex set). Note that a finite graph 
would not work here because its edges
can be partitioned into $d/2$ subgraphs of maximum degree at most
2 by Petersen's 2-Factor Theorem~\cite{petersen:91}.
\qed\end{proof}

\begin{remark} One might think that, in the proof of Theorem~\ref{th:MinK}, each $F_i$ could just be oriented without removing any matching $M$. This is however not true, as the following example of a 2-regular graphing $\GG$ shows. Namely, $\GG$ cannot be oriented in a measurable way to have maximum in- and out-degree at most~1. (A different construction of such $\C G$, due to Adams, can be
found in~\cite[Remark~6.8]{kechris+miller:toe}.)

To construct $\GG$, take two
copies of the circle, say $C_j:=\{(e^{2\pi \mathrm{i} x},j): 0\le x< 1\}$ for $j=1,2$, where $\mathrm{i}\in\I C$ is a square root of $-1$. The first measure-preserving map $\phi$ maps $(e^{2\pi\mathrm{i}  x},j)$ to $(e^{2\pi \mathrm{i} x},3-j)$ for $(x,j)\in [0,1)\times[2]$, i.e.\ it is the natural involution between the two circles. The second map $\psi$ has each circle as an invariant set. Namely, for $j\in[2]$,
fix  an axis $A_j$ via the center of the circle $C_j$ and let the restriction of $\phi$ to $C_j$ be the
reflection along $A_j$. Let us assume that $\alpha/\pi$
is irrational where $\alpha$ is the angle between $A_1$ and $A_2$. Suppose on the contrary that we can orient 
the edges of $\GG$ with all in- and out-degrees being $1$ a.e. Let $X$ consist of those $x\in C_1$ such that  the orientation goes from $x$ to $\phi(x)$.
The measure of $X$ is exactly half of measure of $C_1$, because
exactly half of edges in measure between the circles goes each way.
Consider the composition $\phi\circ \psi\circ \phi\circ \psi:C_1\to C_1$, which is a rotation by angle $2\alpha$. It follows that $X$ is invariant a.e.\ with
respect to this irrational rotation of $C_1$, contradicting its ergodicity.
\end{remark}

\section*{Acknowledgments}

The authors thank the anonymous referee for many useful comments.

Endre Cs\'oka was partially supported by ERC
grants~306493 and 648017, and by the  MTA R\'enyi ``Lend\"ulet'' Groups and 
Graphs Research Group.

G\'abor Lippner was partially supported by the  MTA R\'enyi ``Lend\"ulet'' Groups and 
Graphs Research Group.

Oleg Pikhurko was partially supported by ERC
grant~306493 and EPSRC grant~EP/K012045/1.

\renewcommand{\baselinestretch}{1.1}

\small

\providecommand{\bysame}{\leavevmode\hbox to3em{\hrulefill}\thinspace}
\providecommand{\MR}{\relax\ifhmode\unskip\space\fi MR }
\providecommand{\MRhref}[2]{%
  \href{http://www.ams.org/mathscinet-getitem?mr=#1}{#2}
}
\providecommand{\href}[2]{#2}


\begin{thebibliography}{10}

\bibitem{abert:10:questions}
M.~Ab{\'e}rt, \emph{Some questions}, manuscript, available at\\
  \texttt{http://www.renyi.hu/\symbol{126}abert/questions.pdf}, 2010.

\bibitem{aldous+lyons:07}
D.~Aldous and R.~Lyons, \emph{Processes on unimodular random networks},
  Electron. J. Probab. \textbf{12} (2007), 1454--1508.

\bibitem{benjamini+schramm:01}
I.~Benjamini and O.~Schramm, \emph{Recurrence of distributional limits of
  finite planar graphs}, Electron.\ J.\ Probab. \textbf{6} (2001), 13 pages.

\bibitem{berge+fournier:91}
C.~Berge and J.-C. Fournier, \emph{A short proof for a generalization of
  {V}izing's theorem}, J.\ Graph Theory \textbf{15} (1991), 333--336.

\bibitem{bollobas+riordan:11}
B.~Bollob{\'a}s and O.~Riordan, \emph{Sparse graphs: metrics and random
  models}, Random Struct.\ Algorithms \textbf{39} (2011), 1--38.

\bibitem{conley+kechris:13}
C.~T. Conley and A.~S. Kechris, \emph{Measurable chromatic and independence
  numbers for ergodic graphs and group actions}, Groups Geom.\ Dyn. \textbf{7}
  (2013), 127--180.

\bibitem{csoka+lippner}
E.~Cs{\'o}ka and G.~Lippner, \emph{Invariant random perfect matchings in
  {Cayley} graphs}, E-print \texttt{arxiv.org:1211.2374}, 2012; to appear
  in Groups Geom.\ Dyn.

\bibitem{elek:07}
G.~Elek, \emph{On limits of finite graphs}, Combinatorica \textbf{27} (2007),
  503--507.

\bibitem{elek+lippner:10}
G.~Elek and G.~Lippner, \emph{Borel oracles. {A}n analytical approach to
  constant-time algorithms}, Proc.\ Amer.\ Math.\ Soc. \textbf{138} (2010),
  2939--2947.

\bibitem{fournier:78}
J.-C. {Fournier}, \emph{{Un th\'eor\`eme g\'en\'eral de coloration.}},
  {Probl\`emes combinatoires et th\'eorie des graphes, Orsay 1976}, Colloq.
  int., vol. 260, CNRS, 1978, pp.~153--155 (French).

\bibitem{gaboriau:98}
D.~Gaboriau, \emph{Mercuriale de groupes et de relations}, C. R. Acad. Sci.
  Paris S\'er. I Math. \textbf{326} (1998), 219--222.

\bibitem{gaboriau:00}
\bysame, \emph{Co\^ut des relations d'\'equivalence et des groupes}, Invent.\
  Math. \textbf{139} (2000), 41--98.

\bibitem{gaboriau:02}
\bysame, \emph{On orbit equivalence of measure preserving actions}, Rigidity in
  dynamics and geometry ({C}ambridge, 2000), Springer, Berlin, 2002,
  pp.~167--186.

\bibitem{gaboriau:10}
\bysame, \emph{Orbit equivalence and measured group theory}, Proceedings of the
  {I}nternational {C}ongress of {M}athematicians. {V}olume {III}, Hindustan
  Book Agency, New Delhi, 2010, pp.~1501--1527.

\bibitem{gupta:66}
R.~P. Gupta, \emph{The chromatic index and the degree of a graph}, Notices
  Amer.\ Math.\ Soc. \textbf{13} (1966), 719.

\bibitem{hatami+szegedy+lovasz:14}
H.~Hatami, L.~Lov{\'a}sz, and B.~Szegedy, \emph{Limits of locally-globally
  convergent graph sequences}, Geom.\ Func.\ Analysis \textbf{24} (2014),
  269--296.

\bibitem{kechris:cdst}
A.~S. Kechris, \emph{Classical descriptive set theory}, Graduate Texts in
  Mathematics, vol. 156, Springer-Verlag, New York, 1995.

\bibitem{kechris:gaega}
\bysame, \emph{Global aspects of ergodic group actions}, Mathematical Surveys
  and Monographs, vol. 160, American Mathematical Society, Providence, RI,
  2010.

\bibitem{kechris+marks:survey}
A.~S. Kechris and A.~Marks, \emph{Descriptive graph combinatorics}, Manuscript,
  2015.

\bibitem{kechris+miller:toe}
A.~S. Kechris and B.~D. Miller, \emph{Topics in orbit equivalence}, Lecture
  Notes in Mathematics, vol. 1852, Springer, Berlin, 2004.

\bibitem{kechris+solecki+todorcevic:99}
A.~S. Kechris, S.~Solecki, and S.~Todorcevic, \emph{Borel chromatic numbers},
  Adv.\ Math. \textbf{141} (1999), 1--44.

\bibitem{konig:16}
D.~K{\H{o}}nig, \emph{Gr{\'a}ok {\'e}s alkalmaz{\'a}suk a determin{\'a}nsok
  {{\v Z}s} a halmazok elm{\'e}le{\'e}re}, Matematikai {\'e}s
  Term{\'e}szettudom{\'a}nyi {\'Ertesít\H o} \textbf{34} (1916), 104--119.

\bibitem{laczkovich:88}
M.~Laczkovich, \emph{Closed sets without measurable matching}, Proc.\ Amer.\
  Math.\ Soc. \textbf{103} (1988), 894--896.

\bibitem{levitt:95}
G.~Levitt, \emph{On the cost of generating an equivalence relation}, Ergodic
  Theory Dynam.\ Systems \textbf{15} (1995), 1173--1181.

\bibitem{lovasz:lngl}
L.~{Lov\'asz}, \emph{Large networks and graph limits}, Colloquium Publications,
  Amer.\ Math.\ Soc., 2012.

\bibitem{lusin:30}
N.~Lusin, \emph{Le\c cons sur les ensembles analytiques et leurs applications},
  Chelsea Publishing Co., New York, 1972, R{\'e}impression de l'edition de
  1930.

\bibitem{lyons+nazarov:11}
R.~Lyons and F.~Nazarov, \emph{Perfect matchings as {IID} factors on
  non-amenable groups}, European J. Combin. \textbf{32} (2011), 1115--1125.

\bibitem{marks:16}
A.~Marks, \emph{A determinacy approach to {Borel} combinatorics}, J.\ Amer.\
  Math.\ Soc. \textbf{29} (2016), 579--600.

\bibitem{mcdiarmid:02}
C.~McDiarmid, \emph{Concentration for independent permutations}, Combin.\
  Probab.\ Computing \textbf{11} (2002), 163--178.

\bibitem{nguyen+onak:08}
H.~N. Nguyen and K.~Onak, \emph{Constant-time approximation algorithms via
  local improvements}, 49th Annual IEEE Symposium on Foundations of Computer
  Science, IEEE Computer Soc., 2008, pp.~327--336.

\bibitem{petersen:91}
J.~{Petersen}, \emph{{Die Theorie der regul\"aren Graphs.}}, {Acta Math.}
  \textbf{15} (1891), 193--220.

\bibitem{shalom:05}
Y.~Shalom, \emph{Measurable group theory}, European {C}ongress of
  {M}athematics, Eur. Math. Soc., Z\"urich, 2005, pp.~391--423.

\bibitem{stiebitz+scheide+toft+favrholdt:gec}
M.~Stiebitz, D.~Scheide, B.~Toft, and L.~M. Favrholdt, \emph{Graph edge
  coloring}, Wiley Series in Discrete Mathematics and Optimization, John Wiley
  \& Sons, Inc., Hoboken, NJ, 2012.

\bibitem{tutte:47}
W.~T. Tutte, \emph{The factorization of linear graphs}, J.\ Lond.\ Math.\ Soc.
  \textbf{22} (1947), 107--111.

\bibitem{vizing:64}
V.~G. Vizing, \emph{On an estimate of the chromatic class of a {$p$}-graph},
  Diskret. Analiz No. \textbf{3} (1964), 25--30.

\end{thebibliography}
\end{document}